\documentclass[reqno,12pt]{amsart}
\usepackage{amsfonts}
\usepackage{bbm}
\usepackage{} 
\setlength{\textheight}{23cm}
\setlength{\textwidth}{16cm}
\setlength{\oddsidemargin}{0cm}
\setlength{\evensidemargin}{0cm}
\setlength{\topmargin}{0cm}
\numberwithin{equation}{section}
\usepackage{indentfirst}
 \usepackage{color}
\usepackage{amssymb}
\usepackage{mathrsfs}
\usepackage{xy}
\xyoption{all}
\def\Ext{\mbox{\rm Ext}\,} \def\Hom{\mbox{\rm Hom}} \def\dim{\mbox{\rm dim}\,} \def\Iso{\mbox{\rm Iso}\,}\def\Ind{\mbox{\rm Ind}\,}
\def\lr#1{\langle #1\rangle}    
\def\Ker{\mbox{\rm Ker}\,}   \def\im{\mbox{\rm Im}\,} \def\Coker{\mbox{\rm Coker}\,}
\def\End{\mbox{\rm End}\,}\def\tw{\mbox{\rm tw}\,}\def\id{\mbox{\rm id}\,}

\def\rad{\mbox{\rm rad}\,}\def\M{\mathcal{M}}\def\grad{\mbox{\rm grad}\,}
\def\Aut{\mbox{\rm Aut}\,}\def\Dim{\mbox{\rm \textbf{dim}}\,}\def\A{\mathcal{A}\,} \def\H{\mathcal{H}\,}
\def\P{\mathscr{P}\,}\def\X{\mathbb X}\def\T{\mathbb T}\def\e{\mbox{\rm e}}

\def\ZZ{\mathbb Z}

%
%
\theoremstyle{plain} 
\newtheorem{theorem}{\bf Theorem}[section]
\newtheorem{lemma}[theorem]{\bf Lemma}
\newtheorem{corollary}[theorem]{\bf Corollary}
\newtheorem{proposition}[theorem]{\bf Proposition}

\theoremstyle{definition} 
\newtheorem{definition}[theorem]{\bf Definition}
\newtheorem{remark}[theorem]{\bf Remark}
\newtheorem{example}[theorem]{\bf Example}

\newcommand{\bt}{\begin{theorem}}
\newcommand{\et}{\end{theorem}}
\newcommand{\bl}{\begin{lemma}}
\newcommand{\el}{\end{lemma}}
\newcommand{\bd}{\begin{definition}}
\newcommand{\ed}{\end{definition}}
\newcommand{\bc}{\begin{corollary}}
\newcommand{\ec}{\end{corollary}}
\newcommand{\bp}{\begin{proof}}
\newcommand{\ep}{\end{proof}}
\newcommand{\bx}{\begin{example}}
\newcommand{\ex}{\end{example}}
\newcommand{\br}{\begin{remark}}
\newcommand{\er}{\end{remark}}
\newcommand{\be}{\begin{equation}}
\newcommand{\ee}{\end{equation}}
\newcommand{\ba}{\begin{align}}
\newcommand{\ea}{\end{align}}
\newcommand{\bn}{\begin{enumerate}}
\newcommand{\en}{\end{enumerate}}
\newcommand{\bcs}{\begin{cases}}
\newcommand{\ecs}{\end{cases}}


%
\makeatletter
\renewcommand{\section}{\@startsection{section}{1}{0mm}
  {-\baselineskip}{0.5\baselineskip}{\bf\leftline}}
\makeatother

\begin{document}

\title[Acyclic quantum cluster algebras via Hall algebras of morphisms]{Acyclic quantum cluster algebras\\ via Hall algebras of morphisms} 

\author{Ming Ding, Fan Xu and Haicheng Zhang$^{*}$}
\address{School of Mathematics and Information Science\\
Guangzhou University, Guangzhou 510006, P.~R.~China}
\email{m-ding04@mails.tsinghua.edu.cn (M. Ding)}
\address{Department of Mathematical Sciences\\
Tsinghua University\\
Beijing 100084, P.~R.~China} \email{fanxu@mail.tsinghua.edu.cn
(F. Xu)
}
\address{Institute of Mathematics, School of Mathematical Sciences, Nanjing Normal University,
Nanjing 210023, P.~R.~China}
\email{zhanghc@njnu.edu.cn (H. Zhang)}

\subjclass[2010]{ 
17B37, 16G20, 17B20.
}
\keywords{ 
Quantum cluster algebra; Hall algebra; Morphism category.
}
\thanks{$*$~Corresponding author.}


\begin{abstract}
Let $A$ be the path algebra of a finite acyclic quiver $Q$ over a finite field.
We realize the quantum cluster algebra with principal coefficients associated to $Q$ as a sub-quotient of a certain Hall algebra involving the category of morphisms between projective $A$-modules.
\end{abstract}

\maketitle

\section{Introduction}
The Hall algebra of a finite dimensional algebra $A$ over a finite field was introduced by Ringel \cite{R90} in 1990.
Ringel \cite{R90,R90a} proved that if $A$ is a representation-finite hereditary algebra, the Ringel--Hall algebra of $A$ provides a realization of the positive part of the corresponding quantum group.
Ringel's approach establishes a relation between the representation theory of algebras and Lie theory, and provides an algebraic framework for studying the Lie theory resulting from Hall algebras associated to various abelian categories. To\"en \cite{Toen2006} generalized Ringel's construction to define the derived Hall algebra for a DG-enhanced triangulated category satisfying certain finiteness conditions. Later on, for a triangulated category satisfying the left homological finiteness condition, Xiao and Xu \cite{XiaoXu} showed that To\"en's construction still provides an associative unital algebra. It was expected but so far not successful to realize an entire quantum group through derived Hall algebra over triangulated category. In 2013,
Bridgeland \cite{Bri13} provided a realization of the
whole quantum group via the Hall algebra of 2-cyclic complexes of projective modules over a hereditary algebra.

Lusztig \cite{Lu90,Lu91} invented the geometric version of Ringel--Hall algebra constructions and obtained the canonical basis of the positive part of a quantum group as the direct summands of some semisimple constructible complexes over module varieties of a quiver. Kashiwara \cite{Ka91} applied an algebraic approach to define the crystal basis of the positive part of a quantum group. It is noteworthy that the canonical basis of a quantum group coincides with its crystal basis. In \cite{L3}, Lusztig has also introduced the semicanonical basis of the positive part of the enveloping algebra associated to a quiver $Q$  as certain constructible functions over the varieties of nilpotent representations of the preprojective algebra of $Q$, and the basis is indexed by the irreducible components of the varieties.

Cluster algebras were introduced by Fomin and Zelevinsky in \cite{FZ} and later the quantum cluster algebras were introduced by Berenstein and Zelevinsky in \cite{BZ05}. Inventions of cluster algebras and quantum cluster algebras are aimed to develop a combinatorial approach to the dual semicanonical bases in coordinate rings and dual canonical bases in quantum deformations of varieties related to algebraic groups. Geiss, Leclerc, and Schr\"oer proved in
\cite{GLS} that the cluster monomials of certain cluster algebras are elements of the dual of Lusztig's semicanonical basis. Kimura and Qin \cite{KQ} categorified the quantum cluster algebra of an acyclic quiver via the generalized graded quiver variety, and as a byproduct it is proved that the quantum cluster monomials belong to the corresponding dual canonical basis. Moreover, Qin \cite{Qin2} extended these results to quantum cluster algebras which are injective-reachable.

A natural idea  is to construct a framework to explicitly relate Hall algebras with (quantum) cluster algebras. In \cite{CK2005}, Caldero and Keller suggested the similarity between the multiplication in a cluster algebra and that in a dual Hall algebra. In \cite[Theorem 3.3]{DX}, the similarity was confirmed for the quantum cluster algebra of an acyclic quiver (see \cite{BR} for the generalization). Given a finite acyclic quiver $Q$, let $\mathcal{AH}_q(Q)$ be the subalgebra of a certain skew-field of fractions generated by quantum cluster characters (see Section \ref{quantum_cluster_char} for more details). Then, there exists an algebra homomorphism from the dual Hall algebra associated to the representation category of $Q$ to $\mathcal{AH}_q(Q)$ (cf. \cite{CDX}). It is pitiful that this homorphism may not be surjective, in particular, there might be no preimages of  the initial quantum cluster variables.

The aim of this paper is to construct a surjective algebra homomorphism from a certain Hall algebra to  $\mathcal{AH}_q(Q)$, and then realize the quantum cluster algebra as a sub-quotient algebra of this Hall algebra. In order to achieve this aim, we construct the localized Hall algebra $\mathcal{MH}(Q)$ associated to the morphism category $\mathcal{C}_2(\mathcal{P})$ (see Section \ref{morphism} for the definition), which has indecomposable objects indexed by the isomorphism classes of indecomposable objects in the cluster category of $Q$. The algebra $\mathcal{MH}(Q)$ contains the dual Hall algebra of $Q$ as a subalgebra and is isomorphic to the subalgebra of the extended dual derived Hall algebra of $Q$ generated by all objects ${M\oplus P[1]}$ corresponding to $kQ$-modules $M$ and projective $kQ$-modules $P$ (see Theorem \ref{mor_derived}).

The main result of this paper is Theorem \ref{alg-homo} which gives a surjective homorphism from a twisted version of $\mathcal{MH}(Q)$ to $\mathcal{AH}_q(Q)$.
This surjective algebra homomorphism will motivate the interactions between Hall algebras and quantum cluster algebras. For example, one can compare the categorifications of quantum groups and quantum cluster algebras, and then compare their dual canonical bases.

The paper is organized as follows: In Section 2 we mainly summarize some homological properties of a morphism category, and then define the associated Hall algebra via Bridgeland's approach in Section 3. The characterizations of the multiplicative structure of thus defined Hall algebra are given in Sections 4 and 5. Section 6 is devoted to establishing the relation between this Hall algebra and the extended dual derived Hall algebra. We reformulate the definitions of quantum cluster characters and give the corresponding cluster multiplication formulas in Section 7. Finally, we prove the main theorem of this paper in Section 8.

Let us fix some notations used throughout the paper. Let $k=\mathbb{F}_q$ be always a finite field with
$q$ elements, and $\ZZ[{q}^{\pm\frac{1}{2}}]$ be the ring of integral Laurent polynomials. Let $\A$ be an (essentially small) finitary hereditary abelian $k$-category with enough projectives, where the term ``finitary" means that for any objects $M,N\in\A$, $\Hom_{\A}(M,N)$ and $\Ext^1_{\A}(M,N)$ are both finite dimensional. Let $\mathscr{P}\subset\A$ be the subcategory consisting of projective objects. Denote by $C^b(\A)$ and $D^b(\A)$ the category of bounded complexes over $\A$ and its bounded derived category, respectively. For each $M_\bullet\in C^b(\A)$, its $i$-th homology is denoted by $H_i(M_\bullet)$.  The Grothendieck group of $\A$ and the set of isomorphism classes $[X]$ of objects in $\A$ are denoted by $K(\A)$ and $\Iso(\A)$, respectively. For any object $M\in\A$ we denote by $\hat{M}$ or $\Dim M$ the image of $M$ in $K(\A)$. For a finite set $S$, we denote by $|S|$ its cardinality. For an object $M$ in an additive category, we denote by $\Aut(M)$ the automorphism group of $M$, and set $a_M:=|\Aut(M)|$. We always assume that all the vectors are column vectors.

\section{Preliminaries}

\subsection{Hall algebras}
Given objects $L,M,N \in \mathcal{A}$, let $\Ext_\mathcal{A}^1(M,N)_L \subset \Ext_\mathcal{A}^1(M,N)$ be the subset consisting of those equivalence classes of short exact sequences with middle term isomorphic to $L$.
\begin{definition}\label{Hall algebra of abelian category}
The \emph{Hall algebra} $\mathcal {H}(\mathcal{A})$ of $\mathcal{A}$ is the free $\ZZ[{q}^{\pm\frac{1}{2}}]$-module with basis elements $[M] \in \Iso(\mathcal{A})$, and with the multiplication defined by
\[[M] \diamond [N] = \sum\limits_{[L] \in \Iso(\mathcal{A})} {\frac{{|\Ext_\mathcal{A}^1{{(M,N)}_L}|}}{{|\Hom_\mathcal{A}(M,N)|}}} [L].\]
\end{definition}
\begin{remark}
Given objects $L,M,N\in \A$, set
$$F_{MN}^L:=|\{N'\subset L~|~N'\cong N, L/N'\cong M\}|.$$
By the Riedtmann--Peng formula \cite{Riedtmann,Peng},
$$F_{MN}^{L}=\frac{|\Ext^1_{\A}(M,N)_{L}|}{|\Hom_{\A}(M,N)|}\frac{a_{L}}{a_{M}a_{N}}.$$ Thus in terms of alternative generators $[[M]]=\frac{[M]}{a_M}$, the product takes the form
$$[[M]]\diamond [[N]]= \sum\limits_{[L] \in \Iso(\mathcal{A})}F_{MN}^L[[L]],$$
which is the definition used, for example, in \cite{R90a,Sc}. The associativity of Hall algebras amounts to the following identity
\begin{equation}\label{jiehe}\sum\limits_{[M]}F_{XY}^MF_{MZ}^L=\sum\limits_{[N] }F_{XN}^LF_{YZ}^N,\end{equation} for any objects $L,X,Y,Z\in\A$.
\end{remark}

Given objects $M,N \in \mathcal{A}$, one defines \begin{equation}\label{Euler form}\lr{M,N}:=\dim_k\Hom_{\A}(M,N)-\dim_k\Ext^1_{\A}(M,N),\end{equation}
which descends to give a bilinear form
$$\lr{\cdot ,\cdot }: K(\mathcal{A})\times K(\mathcal{A})\longrightarrow \mathbb{Z},$$
called the \emph{Euler form} of $\mathcal{A}$.

The \emph{twisted Hall algebra} $\H_{\tw}(\A)$ is the same module as $\H(\A)$ but with the twisted multiplication defined by
$$[M]*[N]=q^{\lr{M,N}}[M]\diamond[N].$$

\subsection{Morphism categories}\label{morphism}
Let $C_2(\A)$ be the category whose objects are morphisms $\xymatrix{M_{-1}\ar[r]^f&M_0}$ in $\A$, and each morphism from $\xymatrix{M_{-1}\ar[r]^f&M_0}$ to $\xymatrix{N_{-1}\ar[r]^g&N_0}$ is a pair $(u,v)$ of morphisms in $\A$ such that the following diagram
$$\xymatrix{M_{-1}\ar[r]^f\ar[d]_u&M_0\ar[d]^v\\
N_{-1}\ar[r]^g&N_0}$$ is commutative. Clearly, we may consider $C_2(\A)$ as an extension-closed subcategory of $C^b(\A)$ by identifying each morphism $\xymatrix{M_{-1}\ar[r]^f&M_0}$ with the complex $M_\bullet$ whose components of degrees $-1$ and $0$ are $M_{-1}$ and $M_0$, respectively, and other components are zero. In what follows, we also write $M_\bullet$ as a morphism $\xymatrix{M_{-1}\ar[r]^f&M_0.}$ Let $C_2(\P)\subset C_2(\A)$ be the extension-closed subcategory consisting of morphisms in $\P$. Denote by $C^b(\P)\subset C^b(\A)$ the subcategory consisting of bounded complexes over projectives, and denote by $K^b(\P)$ its homotopy category. Since $\A$ is hereditary, it is well known that $K^b(\P)$ is equivalent to $D^b(\A)$ as triangulated categories. For any objects $M_\bullet$ and $N_\bullet$ in $C_2(\P)$, by \cite[Lemma 3.1]{Gor2}, we know that
\begin{equation}
\begin{split}
\Ext^1_{C_2(\P)}(M_\bullet,N_\bullet)&\cong\Ext^1_{C^b(\P)}(M_\bullet,N_\bullet)\\
&\cong\Hom_{K^b(\P)}(M_\bullet,N_\bullet[1])\\
&\cong\Hom_{D^b(\A)}(M_\bullet,N_\bullet[1]).
\end{split}
\end{equation}

For each object $P\in\P$, define two objects in $C_2(\P)$
\begin{equation}K_P:=\xymatrix{P\ar[r]^1&P}~~\text{and}~~Z_P:=\xymatrix{P\ar[r]&0.}\end{equation}

We have the following well known result:
\begin{lemma}\cite[Lemma 4.1]{Bri13}\label{jx}
Given $M\in\A$, each projective resolution of $M$ is isomorphic to a resolution of the form
\begin{equation*}
\xymatrix{0\ar[r]&\Omega_M\oplus R\ar[r]^{\delta_M\oplus1}&P_M\oplus R\ar[r]&M\ar[r]&0,}
\end{equation*}
for some $R\in\P$ and some minimal projective resolution\footnote{The notations $P_M$ and $\Omega_M$ will be used throughout the paper.}
\begin{equation}\label{mpr}
\xymatrix{0\ar[r]&\Omega_M\ar[r]^{\delta_M}&P_M\ar[r]&M\ar[r]&0.}
\end{equation}
\end{lemma}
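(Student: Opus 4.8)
The statement to prove is Lemma~\ref{jx}, attributed to \cite[Lemma 4.1]{Bri13}: every projective resolution of $M \in \A$ decomposes as the direct sum of a minimal projective resolution and a trivial (split) complex $R \xrightarrow{1} R$.

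The plan is to start from an arbitrary projective resolution $0 \to Q_1 \xrightarrow{\delta} Q_0 \to M \to 0$ and compare it with a fixed minimal projective resolution $0 \to \Omega_M \xrightarrow{\delta_M} P_M \to M \to 0$, which exists because $\A$ has enough projectives and is hereditary (so the kernel $\Omega_M$ of a projective cover $P_M \to M$ is again projective). Both are projective resolutions of the same object, so by the comparison theorem for projective resolutions (or just the projectivity of $Q_0$ and $P_M$ combined with the lifting property) there is a chain map between them lifting $\id_M$, and since both complexes are built from projectives and are exact, this chain map is a homotopy equivalence. The key point is to upgrade "homotopy equivalence" to "isomorphism after adding a contractible complex": two bounded-below complexes of projectives are homotopy equivalent if and only if they differ by direct summands of the form $P \xrightarrow{1} P$. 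Concentrated in two terms, a contractible complex of projectives over a hereditary category is a direct sum of copies of $K_P = (P \xrightarrow{1} P)$, so we would obtain $Q_\bullet \cong P_{M,\bullet} \oplus (R \xrightarrow{1} R)$ for some $R \in \P$, which is exactly the claimed form with $R$ the summand that gets split off.

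Concretely, I would carry this out by hand rather than invoking a black-box theorem, since the two-term case is elementary. Fix the projective cover $p\colon P_M \to M$ with kernel inclusion $\delta_M\colon \Omega_M \hookrightarrow P_M$. Given the arbitrary resolution $\pi\colon Q_0 \to M$ with kernel $\delta\colon Q_1 \hookrightarrow Q_0$: using projectivity of $Q_0$ lift $\pi$ through $p$ to get $v\colon Q_0 \to P_M$, and using projectivity of $P_M$ lift $p$ through $\pi$ to get $v'\colon P_M \to Q_0$; these restrict to maps $u\colon Q_1 \to \Omega_M$ and $u'\colon \Omega_M \to Q_1$ on kernels. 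The composite $vv'\colon P_M \to P_M$ is a lift of $\id_M$, and because $p$ is a projective cover (so $vv'$ cannot factor through $\Omega_M = \rad$-type submodule without being... ) — more precisely, $vv'$ is an automorphism of $P_M$: any endomorphism of $P_M$ lifting $\id_M$ is invertible since $P_M \to M$ is a projective cover. After adjusting $v'$ by $(vv')^{-1}$ we may assume $vv' = \id_{P_M}$, so $P_M$ is a direct summand of $Q_0$, say $Q_0 = P_M \oplus R$ with $v$ the projection; chasing kernels, $\Omega_M$ is correspondingly a direct summand of $Q_1$, say $Q_1 = \Omega_M \oplus R'$, and the complement $R' \xrightarrow{} R$ is itself exact in the middle, hence (being a two-term exact complex of projectives with zero homology) forces $R' \to R$ to be an isomorphism; identifying $R'$ with $R$ along it, the differential on $Q_\bullet$ becomes $\delta_M \oplus 1\colon \Omega_M \oplus R \to P_M \oplus R$ as required. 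One should check the off-diagonal blocks of the differential vanish after a further change of basis: the block $\Omega_M \to R$ and $R' \to P_M$ can be removed by upper/lower triangular automorphisms of $Q_0$ and $Q_1$ (elementary, using that $\delta_M$ is a monomorphism and the complement is split), which is the only slightly fiddly computation.

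The main obstacle is the bookkeeping in that last step — arranging that after the change of coordinates the differential is genuinely block-diagonal $\delta_M \oplus 1$ and not merely block-triangular — together with verifying that the endomorphism $vv'$ of $P_M$ is an automorphism, which is where the \emph{minimality} of $P_M \to M$ (i.e. that it is a projective cover, so its kernel lies in the radical and no proper summand surjects onto $M$) is essential; without minimality one only gets a homotopy equivalence, not a splitting with a $1$-labelled complementary piece. Since this is a known result of Bridgeland, an acceptable alternative is simply to cite \cite[Lemma 4.1]{Bri13} and record that the argument goes through verbatim in the present generality of a finitary hereditary abelian category with enough projectives, as nothing beyond the existence of projective covers (equivalently, of minimal projective resolutions) and heredity was used there.
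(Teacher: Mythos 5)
Your argument is correct and is essentially the argument behind the paper's proof, which consists of citing Bridgeland's Lemma 4.1: that proof is exactly this comparison of an arbitrary resolution with the projective cover, using that any endomorphism of $P_M$ lifting $\id_M$ is an automorphism by minimality, and it transfers verbatim to the present setting. One small simplification: once you have arranged $vv'=\id_{P_M}$, so that $Q_0\cong P_M\oplus R$ with $R=\Ker(v)\subseteq\Ker(\pi)$ and the augmentation becomes $p$ on the first summand and $0$ on the second, the kernel of $\pi$ is automatically the subobject $\Omega_M\oplus R$ with inclusion $\delta_M\oplus 1$, so the block-diagonalization step you flag as fiddly is not actually needed.
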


Given an object $M\in\A$, take a minimal projective resolution (\ref{mpr}). Then we define an object in $C_2(\P)$
\begin{equation}\label{cm}
C_M:=\xymatrix{\Omega_M\ar[r]^{\delta_M}&P_M.}
\end{equation}
By Lemma \ref{jx}, we know that any two minimal projective resolutions of $M$ are isomorphic, so $C_M$ is well defined up to isomorphism.

Now we describe the indecomposable objects in $C_2(\P)$ in the following
\begin{proposition}\label{fenjie}
Each object $M_\bullet$ in $C_2(\P)$ has a direct sum decomposition
$$M_\bullet=K_P\oplus Z_Q\oplus C_M$$
for some $P, Q\in\P$ and $M\in\A$. Moreover, the objects $P, Q$ and $M$ are uniquely determined up to isomorphism.
\end{proposition}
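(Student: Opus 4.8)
The plan is to establish the decomposition in two stages: first show that every object of $C_2(\P)$ splits off a maximal ``acyclic'' part of the form $K_P \oplus Z_Q$, leaving behind a complex with no direct summands isomorphic to $K_{P'}$ or $Z_{Q'}$; then identify the remaining summand with $C_M$ for a suitable $M \in \A$. For the first stage, given $M_\bullet = (f\colon M_{-1} \to M_0)$, I would analyze the morphism $f$ between projectives. Writing $M_{-1} = \Ker f \oplus M_{-1}'$ and noting $\Ker f$ is a submodule of a projective over a hereditary algebra hence projective, the restriction $f|_{M_{-1}'}$ is a monomorphism; since $\A$ is hereditary its image is projective, so $M_0$ contains a copy of it as a direct summand, say $M_0 = \im f \oplus M_0'$. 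This yields a decomposition $M_\bullet \cong (\im f \xrightarrow{\cong} \im f) \oplus (\Ker f \to 0) \oplus (0 \to M_0')$, i.e.\ $M_\bullet \cong K_P \oplus Z_Q \oplus Z'_{M_0'}[-1]$-type pieces — wait, more carefully: the middle and last pieces are $K_{\im f}$, $Z_{\Ker f}$, and the complex $0 \to M_0'$ concentrated in degree $0$. Hmm, but $C_M$ is $\Omega_M \xrightarrow{\delta_M} P_M$ with $\delta_M$ a \emph{mono}; a complex concentrated in degree $0$ is $C_M$ with $M = M_0'$ and $\Omega_M = 0$ only if $M_0'$ is projective, which it is. So in fact the naive splitting already gives everything, but I must be cautious: the goal's $C_M$ has $\delta_M$ injective and $P_M \to M$ the \emph{projective cover}, so I cannot simply take any mono $\Omega_M \to P_M$; I must match it to a minimal projective resolution.

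So the cleaner route: let $P_M \to M$ be the projective cover, $\Omega_M = \Ker$, giving the minimal resolution \eqref{mpr} and hence $C_M$. The key structural input is that an arbitrary morphism $f\colon P \to P'$ of projectives, viewed as an object of $C_2(\P) \subset C^b(\P)$, is determined up to isomorphism in $C_2(\P)$ by its kernel, cokernel, and image data, because $\A$ is hereditary. I would argue: put $M := \Coker f$. The two-term complex $f\colon P \to P'$ has homology $H_{-1} = \Ker f$ (projective) and $H_0 = \Coker f = M$. Using that $\Ker f$ is projective, split $P = \Ker f \oplus \bar P$ so $f|_{\bar P}$ is mono with cokernel $M$; thus $\bar P \to P'$ is a (generally non-minimal) projective resolution of $M$. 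By Lemma~\ref{jx} this resolution is isomorphic to $\delta_M \oplus 1\colon \Omega_M \oplus R \to P_M \oplus R$. Translating back through the isomorphisms, $f$ as an object of $C_2(\P)$ becomes $(\delta_M \oplus 1\colon \Omega_M \oplus R \to P_M \oplus R) \oplus (\Ker f \to 0)$, and the ``$\oplus 1$'' part is exactly $K_R$, the $\Ker f$ part is $Z_{\Ker f}$, and the remaining $\delta_M\colon \Omega_M \to P_M$ is $C_M$. Hence $M_\bullet \cong K_R \oplus Z_{\Ker f} \oplus C_M$ with $P := R$, $Q := \Ker f$.

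For uniqueness, I would first record that $K_P$, $Z_Q$, and $C_M$ are themselves indecomposable when $P$, $Q$ are indecomposable projective and $M$ indecomposable (for $C_M$: its endomorphism ring in $C_2(\P)$ maps onto $\End_\A M$, which is local, and one checks the kernel consists of nilpotents, or argues via $\Hom_{D^b}$; alternatively this is already implicit in the literature cited). Then uniqueness of $P$, $Q$, $M$ follows from Krull--Schmidt in $C_2(\P)$ (valid since it is a Hom-finite, idempotent-complete additive $k$-category) together with the observation that the three families of indecomposables $\{K_P\}$, $\{Z_Q\}$, $\{C_M\}$ are pairwise non-isomorphic and distinct within each family — distinguished, e.g., by their homology: $H_*(K_P) = 0$, $H_{-1}(Z_Q) = Q \neq 0 = H_0(Z_Q)$, while $H_{-1}(C_M) = 0$ and $H_0(C_M) = M \neq 0$ (using $\delta_M$ mono and $M$ nonzero). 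So reading off homology recovers the multiplicities of the $Z_Q$'s and $C_M$'s, and then the $K_P$'s are determined by cancellation. The recovery of $M$ from $C_M$ up to isomorphism is clear since $H_0(C_M)=M$.

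The main obstacle is the bookkeeping in the first stage: extracting a \emph{minimal} projective resolution (to get the honest $C_M$ with projective cover, not merely some resolution) requires invoking Lemma~\ref{jx} at exactly the right point and carefully tracking how the summand $R$ splits off as $K_R$ after the identification $\bar P \to P' \cong (\Omega_M \oplus R \to P_M \oplus R)$. A subtlety to watch: one must ensure the isomorphism of resolutions from Lemma~\ref{jx} is realized by an isomorphism in $C_2(\P)$ (a chain isomorphism), which it is, since an isomorphism of two-term projective resolutions over a fixed $M$ is precisely an isomorphism of the corresponding objects of $C_2(\P)$ commuting with the augmentations — but since we only need the underlying two-term complex, even just the chain-level isomorphism suffices. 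Everything else is routine once Krull--Schmidt and the homology computation are in place.
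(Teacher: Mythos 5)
Your final argument (the ``cleaner route'' onward) is correct, and it uses the same two engines as the paper — the hereditary splitting of $\Ker f$ off $M_{-1}$ (because $\im f$ is projective, so $M_{-1}\twoheadrightarrow\im f$ splits) and Lemma~\ref{jx} to compare the resulting resolution $\bar P\to M_0$ of $\Coker f$ with the minimal one — but it is organized differently. The paper first shows $C_2(\P)$ is Krull--Schmidt and then classifies the \emph{indecomposable} objects by the case analysis $\Ker f=0$ (Lemma~\ref{jx} gives $C_M$ or $K_P$) versus $\Ker f\neq0$ (splits off $Z_{\Ker f}$, forcing $M_\bullet\cong Z_Q$); existence and uniqueness of the decomposition are then read off from Krull--Schmidt. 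You instead decompose an \emph{arbitrary} object in one pass, so existence needs no Krull--Schmidt input at all, and you make the uniqueness explicit via homology ($H_{-1}$ recovers $Q$, $H_0$ recovers $M$, and $P$ follows by cancellation in $\A$/$\P$), which the paper leaves implicit; your remarks on indecomposability of $C_M$ are not even needed for that argument. One caution: in your exploratory first paragraph the claim that the projective subobject $\im f\subseteq M_0$ ``is a direct summand of $M_0$'' is false in general (e.g.\ $\rad P\subseteq P$ for $Q\colon 1\to 2$); you rightly abandon that route, and the argument you actually run splits $M_{-1}$, not $M_0$, so nothing is affected — but that sentence should be deleted rather than left as a hedge. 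Your parenthetical sketch that $\End(C_M)$ is local for $M$ indecomposable (kernel of $\End_{C_2(\P)}(C_M)\to\End_\A(M)$ nilpotent, using $\Omega_M\subseteq\rad P_M$ and that $\delta_M$ is mono) is fine but, as noted, dispensable.
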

\begin{proof}
Clearly, $C_2(\A)$ is an abelian category, and thus idempotents split in $C_2(\A)$. For any $Z_\bullet\in C_2(\P)$ and idempotent $e:Z_\bullet\to Z_\bullet$, we obtain in $C_2(\A)$ that $Z_\bullet=X_\bullet\oplus Y_\bullet$. It is clear that
$X_\bullet$ and $Y_\bullet$ are in $C_2(\P)$. That is, idempotents split in $C_2(\P)$.
Moreover, since $C_2(\P)$ is Hom-finite, we conclude that $C_2(\P)$ is a Krull--Schmidt category. Now,
we assume that $M_\bullet=\xymatrix{M_{-1}\ar[r]^f&M_0}$ is a nonzero indecomposable object.

If $\Ker(f)=0$, then we have a short exact sequence
$$\xymatrix{0\ar[r]&M_{-1}\ar[r]^f&M_0\ar[r]&M\ar[r]&0,}$$ where $M=\Coker(f)$. By Lemma \ref{jx}, $M_\bullet\cong C_M\oplus K_P$ for some $P\in\P$. Since $M_\bullet$ is indecomposable, $M_\bullet\cong C_M$ if $M\neq0$; otherwise, $M_\bullet\cong K_P$.

If $\Ker(f)\neq0$, consider the short exact sequence
\begin{equation}\label{KI}\xymatrix{0\ar[r]&\Ker(f)\ar[r]^i&M_{-1}\ar[r]^{\pi}&\im(f)\ar[r]&0.}\end{equation}
Since $\A$ is hereditary, we know that $\im(f)$ is projective, and thus the sequence (\ref{KI}) is splitting. That is, there exists a morphism $r:M_{-1}\to\Ker(f)$ such that $r\circ i=\id.$ Then we have morphisms in $C_2(\P)$
\begin{equation*}\xymatrix{\Ker(f)\ar[r]\ar[d]^i&0\ar[d]\\
M_{-1}\ar[r]^f&M_0,}\quad\quad \xymatrix{M_{-1}\ar[r]^f\ar[d]^r&M_0\ar[d]\\
\Ker(f)\ar[r]&0.}\end{equation*}
Hence $Z_Q=\xymatrix{Q\ar[r]&0,}$ where $Q=\Ker(f)$, is a nonzero direct summand of $M_\bullet$. Since $M_\bullet$ is indecomposable, we obtain that $M_\bullet\cong Z_{Q}$.
\end{proof}

\begin{lemma}\label{hom}
For any $M_\bullet\in C_2(\P)$, $M,N\in\A$ and $P\in\P$, we have that
\begin{flalign}
&|\Hom_{C_2(\P)}(K_P,M_\bullet)|=|\Hom_{\A}(P,M_{-1})|;\label{v_1}\\
&|\Hom_{C_2(\P)}(M_\bullet,K_P)|=|\Hom_{\A}(M_0,P)|;\\
&|\Hom_{C_2(\P)}(C_P,M_\bullet)|=|\Hom_{\A}(P,M_0)|;\\
&|\Hom_{C_2(\P)}(M_\bullet,Z_P)|=|\Hom_{\A}(M_{-1},P)|;\label{v_4}\\
&|\Hom_{C_2(\P)}(C_M,C_N)|=|\Hom_{\A}(M,N)|\cdot|\Hom_{\A}(P_M,\Omega_N)|.\label{v_5}
\end{flalign}
\end{lemma}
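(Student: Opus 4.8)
The plan is to compute each of the five Hom-groups directly from the definition of morphisms in $C_2(\P)$, exploiting the very rigid structure of the three building blocks $K_P$, $Z_P$, and $C_M$ described in Proposition \ref{fenjie}. Recall that a morphism in $C_2(\P)$ from $\xymatrix{X_{-1}\ar[r]^{f}&X_0}$ to $\xymatrix{Y_{-1}\ar[r]^{g}&Y_0}$ is a commuting square, i.e.\ a pair $(u,v)$ with $vf=gu$; so in each case I would unwind what the commutativity condition forces and count.

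First I would handle \eqref{v_1}: a morphism $K_P=\xymatrix{P\ar[r]^{1}&P}\to\xymatrix{M_{-1}\ar[r]^{f}&M_0}$ is a pair $(u,v)$ with $v\circ 1=f\circ u$, hence $v=fu$ is completely determined by $u\in\Hom_\A(P,M_{-1})$, and conversely any such $u$ gives a morphism; this yields the bijection with $\Hom_\A(P,M_{-1})$. The dual statement for $\Hom_{C_2(\P)}(M_\bullet,K_P)$: a pair $(u,v)$ with $1\circ u=v\circ f$, so $u=vf$ is determined by $v\in\Hom_\A(M_0,P)$, giving the count $|\Hom_\A(M_0,P)|$. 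For $\Hom_{C_2(\P)}(C_P,M_\bullet)$, note that for a projective $P$ the minimal projective resolution is $\xymatrix{0\ar[r]&P\ar[r]&M_0=P\ar[r]&0}$ wait --- more precisely $\Omega_P=0$ and $P_P=P$, so $C_P=\xymatrix{0\ar[r]&P}$; a morphism $C_P\to M_\bullet$ is then a pair $(0,v)$ with $v\in\Hom_\A(P,M_0)$ and the commutativity square is automatic, giving $|\Hom_\A(P,M_0)|$. Similarly $\Hom_{C_2(\P)}(M_\bullet,Z_P)$ with $Z_P=\xymatrix{P\ar[r]&0}$: a morphism is a pair $(u,0)$ with $u\in\Hom_\A(M_{-1},P)$ and commutativity is automatic, giving $|\Hom_\A(M_{-1},P)|$.

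The substantive identity is \eqref{v_5}. Here a morphism $C_M=\xymatrix{\Omega_M\ar[r]^{\delta_M}&P_M}\to C_N=\xymatrix{\Omega_N\ar[r]^{\delta_N}&P_N}$ is a pair $(u,v)$ with $v\delta_M=\delta_N u$. The plan is to use the short exact sequences $\xymatrix{0\ar[r]&\Omega_M\ar[r]^{\delta_M}&P_M\ar[r]^{p_M}&M\ar[r]&0}$ and the analogous one for $N$. Applying $\Hom_\A(P_M,-)$ to the sequence for $N$ gives an exact sequence $0\to\Hom_\A(P_M,\Omega_N)\to\Hom_\A(P_M,P_N)\to\Hom_\A(P_M,N)\to 0$ (the right-exactness because $P_M$ is projective). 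A chain map $(u,v)$ is determined by $v\in\Hom_\A(P_M,P_N)$ together with the requirement that $v\delta_M$ factors through $\delta_N$, i.e.\ that the composite $P_M\xrightarrow{v} P_N\xrightarrow{p_N} N$, precomposed with $\delta_M$... more precisely: $v$ extends to a chain map exactly when $p_N v$ kills $\im\delta_M$, equivalently $p_N v$ factors through $p_M$, yielding a morphism $M\to N$; and once $v$ is chosen compatibly, $u$ is uniquely determined since $\delta_N$ is monic. So I would set up the counting as: the set of valid $v$'s surjects onto $\Hom_\A(M,N)$ (every $M\to N$ lifts to some $v$ by projectivity of $P_M$) with fiber a torsor over $\Hom_\A(P_M,\Omega_N)$ (the ambiguity in $v$ lies in $\ker(\Hom_\A(P_M,P_N)\to\Hom_\A(P_M,N))=\im\Hom_\A(P_M,\Omega_N)$, and this last map is injective since $\delta_N$ is monic). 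Hence $|\Hom_{C_2(\P)}(C_M,C_N)|=|\Hom_\A(M,N)|\cdot|\Hom_\A(P_M,\Omega_N)|$.

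The only real subtlety — and the step I would be most careful with — is the bookkeeping in \eqref{v_5}: one must verify that $u$ is genuinely uniquely determined by $v$ (clear, as $\delta_N$ is a monomorphism), that the pair $(u,v)$ really is a chain map once the factorization condition on $v$ holds, and above all that the natural map from $\{v : p_Nv \text{ factors through } p_M\}$ to $\Hom_\A(M,N)$ is surjective with fibers of constant size $|\Hom_\A(P_M,\Omega_N)|$. Surjectivity is exactly projectivity of $P_M$ applied to $p_N$; constancy of fiber size follows because the difference of two lifts of the same $M\to N$ lands in $\Hom_\A(P_M,\Omega_N)$ (viewed inside $\Hom_\A(P_M,P_N)$ via $\delta_N$), and that inclusion is injective. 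Everything else is a direct unwinding of the commuting-square definition, and I expect no obstacle there.
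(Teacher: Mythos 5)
Your proposal is correct and follows essentially the same route as the paper: the first four identities by directly unwinding the commuting-square condition (the paper cites Bautista but notes they follow from direct calculation), and \eqref{v_5} by exhibiting the surjection $\Hom_{C_2(\P)}(C_M,C_N)\twoheadrightarrow\Hom_{\A}(M,N)$ coming from lifting along $p_N$ (the comparison lemma) and identifying the kernel/fibers with $\Hom_{\A}(P_M,\Omega_N)$ via monicity of $\delta_N$. Your write-up just makes explicit the counting that the paper leaves implicit.
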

\begin{proof}
The identities in $(\ref{v_1}$-$\ref{v_4})$ are taken from \cite[Proposition 3.1]{Bau}, and they can also be easily obtained by direct calculations. We only prove the identity in (\ref{v_5}).

By the comparison lemma in homological algebra, it is easy to get a surjective map
$$\xymatrix{\varphi:\Hom_{C_2(\P)}(C_M,C_N)\ar@{->>}[r]& \Hom_{\A}(M,N).}$$
Then by the universal property of kernels, we can obtain $\Ker(\varphi)\cong\Hom_{\A}(P_M,\Omega_N)$.
\end{proof}

\begin{lemma}\cite[Corollaries 3.1 and 3.2]{Bau}\label{proinj}
The objects $C_P$ and $K_P$, where $P\in\P$ is indecomposable, provide a complete set of indecomposable projective objects in $C_2(\mathscr{P})$; and the objects $Z_P$ and $K_P$ provide a complete set of indecomposable injective objects. Moreover, all $K_P$ are exactly the whole indecomposable projective-injective objects.
\end{lemma}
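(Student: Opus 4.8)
The plan is to prove each of the statements in Lemma~\ref{proinj} by combining Proposition~\ref{fenjie} with the Hom-space computations of Lemma~\ref{hom}. The key observation is that, since $C_2(\P)$ is a Krull--Schmidt category (established in the proof of Proposition~\ref{fenjie}), it suffices to determine which of the indecomposable objects $K_P$, $Z_Q$, $C_M$ (with $P,Q\in\P$ indecomposable, $M\in\A$ indecomposable) are projective, which are injective, and which are both.

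First I would handle projectivity. An object $X_\bullet$ is projective in $C_2(\P)$ iff $\Ext^1_{C_2(\P)}(X_\bullet, -)$ vanishes on all of $C_2(\P)$, equivalently (by the isomorphisms $\Ext^1_{C_2(\P)}(M_\bullet,N_\bullet)\cong\Hom_{D^b(\A)}(M_\bullet,N_\bullet[1])$ recorded just before Lemma~\ref{jx}) iff $\Hom_{D^b(\A)}(X_\bullet, N_\bullet[1])=0$ for all $N_\bullet\in C_2(\P)$. For $K_P=\xymatrix{P\ar[r]^1&P}$: this complex is isomorphic to zero in $K^b(\P)\simeq D^b(\A)$, so the relevant $\Ext^1$ vanishes identically and $K_P$ is projective; by the same reasoning (it is also the zero object in the derived category) it is injective. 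For $C_P=\xymatrix{\Omega_P\ar[r]&P_P}$ when $P$ is projective, the minimal projective resolution of $P$ is just $\xymatrix{0\ar[r]&P\ar[r]^1&P\ar[r]&0}$, so actually $C_P\cong Z_P$ — wait, more carefully, for $P$ projective one has $P_P=P$, $\Omega_P=0$, so $C_P=\xymatrix{0\ar[r]&P}$, which is the complex concentrated in degree $0$. I would then verify directly that this object, call it it $0\to P$ in degrees $-1,0$, is projective: any epimorphism in $C_2(\P)$ onto it is componentwise split in degree $0$ because $P$ is projective in $\A$, and one lifts. Dually $Z_P=\xymatrix{P\ar[r]&0}$ is injective in $C_2(\P)$, since an arbitrary monomorphism into $Z_P$ is split in degree $-1$ by projectivity (hence injectivity is not the right word — but $Z_P$ being a complex concentrated in degree $-1$ with projective entry, the functor $\Hom_{C_2(\P)}(-,Z_P)=\Hom_{\A}((-)_{-1},P)$ by \eqref{v_4}, and since $P$ is projective in the hereditary category $\A$ it is also injective relative to the class of monos arising here — this needs the hereditary hypothesis). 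Then I would check that no $C_M$ with $M$ a nonzero non-projective indecomposable is projective, and no $C_M$ with $M\neq 0$ is injective, by exhibiting a non-split extension or a non-lifting/non-extending map; and that $Z_Q$ is never projective and $C_P$ (i.e.\ $0\to P$) is never injective for $P\neq 0$, again via explicit short exact sequences in $C_2(\P)$.

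Next I would verify completeness: every indecomposable projective is one of $C_P$, $K_P$, and every indecomposable injective is one of $Z_P$, $K_P$. Since $C_2(\P)$ has enough projectives and enough injectives (it sits inside the abelian category $C_2(\A)$ and is extension-closed with the right closure properties), and since by Proposition~\ref{fenjie} every indecomposable object is of the form $K_P$, $Z_Q$, or $C_M$, the list of indecomposable projectives is exactly the sublist of these that are projective, which by the previous paragraph is $\{K_P\}\cup\{C_P : P\in\P \text{ indecomposable}\}$ (noting $C_P$ for projective $P$ is the object $0\to P$), and dually for injectives. Finally, the projective-injective statement: $K_P$ is both, as shown; conversely if $C_P$ were injective we would derive a contradiction from a suitable short exact sequence $0\to C_P\to K_P\to Z_P\to 0$ (which exhibits $C_P=0\to P$ as a sub-object of $K_P$ with quotient $Z_P$, and this does not split unless $P=0$), so $C_P$ is not injective for $P\neq 0$; dually $Z_P$ is not projective for $P\neq 0$. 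Hence the projective-injectives are precisely the $K_P$.

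The main obstacle I anticipate is getting the bookkeeping around $C_P$ for \emph{projective} $P$ exactly right: the notation $C_P=\xymatrix{\Omega_P\ar[r]^{\delta_P}&P_P}$ is defined via a minimal projective resolution, and for $P$ already projective this degenerates ($\Omega_P=0$, $P_P=P$), so one must be careful to identify $C_P$ with the stalk complex $0\to P$ and not confuse it with $K_P$ or $Z_P$; the asymmetry between the projective side (where $C_P$, a complex in degree $0$, appears) and the injective side (where $Z_P$, a complex in degree $-1$, appears) is exactly the asymmetry built into the morphism category, and tracking it correctly through Lemma~\ref{hom} — especially deciding projectivity/injectivity relative to the \emph{non-abelian} category $C_2(\P)$ rather than $C_2(\A)$ — is where care is needed. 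Everything else reduces to the $\Ext^1$-vanishing criteria and the explicit formulas \eqref{v_1}--\eqref{v_5}, together with the hereditary hypothesis on $\A$ which is what makes images of maps between projectives projective and drives the splittings.
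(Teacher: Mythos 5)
The paper offers no proof of Lemma~\ref{proinj} at all: it is simply quoted from Bautista \cite{Bau}. Judged as a self-contained argument, your proposal is essentially correct, and it is the natural in-house proof using only what the paper has already established: Proposition~\ref{fenjie} (Krull--Schmidt plus the list $K_P,Z_Q,C_M$ of indecomposables) reduces the lemma to deciding which of these are projective, injective, or both; the identification $\Ext^1_{C_2(\P)}(M_\bullet,N_\bullet)\cong\Hom_{D^b(\A)}(M_\bullet,N_\bullet[1])$ kills both $\Ext^1_{C_2(\P)}(K_P,-)$ and $\Ext^1_{C_2(\P)}(-,K_P)$ because $K_P$ is acyclic, hence zero in $D^b(\A)$; $C_P\cong(0\to P)$ is projective because conflations in $C_2(\P)$ are degreewise exact and any section of $X_0\twoheadrightarrow P$ is automatically a chain map; $Z_P$ is injective because for any conflation $0\to Z_P\to X_\bullet\to Y_\bullet\to 0$ the degree $-1$ sequence has cokernel $Y_{-1}\in\P$ and therefore splits, the splitting again being a chain map for trivial reasons; and your sequence $0\to C_P\to K_P\to Z_P\to 0$, which is non-split since $H_{-1}(C_P\oplus Z_P)=P\neq 0=H_{-1}(K_P)$, shows in one stroke that $C_P$ is not injective and $Z_P$ is not projective for $P\neq 0$, so the projective-injectives are exactly the $K_P$. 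This is a different route from the paper only in the sense that the paper delegates everything to \cite{Bau}; your version has the merit of staying inside the toolkit of Section~2.

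Three small repairs would make it complete. First, the exclusions you only promise (``I would check\dots by exhibiting a non-split extension'') are immediate from Lemma~\ref{ext} and should be stated: $\Ext^1_{C_2(\P)}(C_M,C_{\Omega_M})\cong\Ext^1_{\A}(M,\Omega_M)\neq 0$ when $M$ is non-projective (otherwise the minimal resolution (\ref{mpr}) splits), so such $C_M$ is not projective; and $\Ext^1_{C_2(\P)}(Z_P,C_M)\cong\Hom_{\A}(P,M)$ is nonzero for a suitable projective $P$ whenever $M\neq 0$, so no $C_M$ with $M\neq0$ is injective (and, varying $M$, no $Z_P$ with $P\neq0$ is projective). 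Second, the injectivity of $Z_P$ does not ``need the hereditary hypothesis'', and ``monomorphism into $Z_P$'' has the direction reversed: what is used is that an inflation $X_\bullet\rightarrowtail Y_\bullet$ in $C_2(\P)$ is split mono in degree $-1$ because its cokernel lies in $C_2(\P)$, so the functor $\Hom_{C_2(\P)}(-,Z_P)\cong\Hom_{\A}((-)_{-1},P)$ of (\ref{v_4}) sends inflations to surjections; heredity is needed for Proposition~\ref{fenjie} and the derived-category identification, not here. Third, you tacitly use that in the exact category $C_2(\P)$ projectivity (resp.\ injectivity) is equivalent to vanishing of $\Ext^1_{C_2(\P)}(X_\bullet,-)$ (resp.\ $\Ext^1_{C_2(\P)}(-,X_\bullet)$); a one-line justification via pullback/pushout of conflations together with extension-closedness of $C_2(\P)$ in $C_2(\A)$ closes that gap. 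The ``enough projectives/injectives'' remark in your completeness step is superfluous: Proposition~\ref{fenjie} alone identifies the indecomposable projectives and injectives once the case-by-case analysis is done.
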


The existence of almost split sequences in $C_2(\P)$ was studied in \cite{Bau}, see also \cite{Bau2,Cha}. Let us give an example of the Auslander--Reiten quiver of $C_2(\P)$ as follows:
\begin{example}\cite[Example 6.7]{Cha}
Let $\A$ be the category of finite dimensional representations of the quiver
$$1\longrightarrow2\longrightarrow3.$$
Then the Auslander--Reiten quiver of $C_2(\P)$ is the following
$$\xymatrix{&&C_{P_1}\ar[rd]\ar@{.}[rr]&&Z_{P_3}\ar[rd]&&\\&C_{P_2}\ar@{.}[rr]\ar[ru]\ar[rd]&&C_{I_2}\ar@{.}[rr]\ar[ru]\ar[rd]&&Z_{P_2}\ar[rd]&\\C_{P_3}\ar[ru]\ar[rd]\ar@{.}[rr]&&C_{S_2}\ar[ru]\ar[rd]\ar@{.}[rr]&&C_{S_1}\ar[ru]\ar[rd]\ar@{.}[rr]&&Z_{P_1}.\\&K_{P_3}\ar[ru]&&K_{P_2}\ar[ru]&&K_{P_1}\ar[ru]&}$$
\end{example}

For each $M_\bullet\in C_2(\P)$, we have a projective resolution and an injective resolution of $M_\bullet$ in the following
\begin{lemma}\cite[Proposition 3.2]{Bau}\label{resol}
For each $M_\bullet\in C_2(\P)$, we have the following short exact sequences
\begin{flalign}
&0\longrightarrow C_{M_{-1}}\longrightarrow C_{M_0}\oplus K_{M_{-1}}\longrightarrow M_{\bullet}\longrightarrow0;\label{tfj}\\
&0\longrightarrow M_{\bullet}\longrightarrow Z_{M_{-1}}\oplus K_{M_0}\longrightarrow Z_{M_0}\longrightarrow0.
\end{flalign}
\end{lemma}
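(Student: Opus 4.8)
\textbf{Proof proposal for Lemma \ref{resol}.}

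The plan is to construct both short exact sequences explicitly as complexes in $C_2(\P)$, viewing each object as a two-term complex concentrated in degrees $-1$ and $0$, and then to verify exactness degreewise in the abelian category $C_2(\A)$. For the first sequence, I would start from the object $M_\bullet=(\xymatrix{M_{-1}\ar[r]^f&M_0})$ together with the minimal projective resolutions $\xymatrix{0\ar[r]&\Omega_{M_{-1}}\ar[r]^{\delta_{M_{-1}}}&P_{M_{-1}}\ar[r]&M_{-1}\ar[r]&0}$ and $\xymatrix{0\ar[r]&\Omega_{M_0}\ar[r]^{\delta_{M_0}}&P_{M_0}\ar[r]&M_0\ar[r]&0}$. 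The object $C_{M_0}\oplus K_{M_{-1}}$ is the morphism $\xymatrix{\Omega_{M_0}\oplus M_{-1}\ar[r]&P_{M_0}\oplus M_{-1}}$ (here I am writing $M_{-1}$ informally; strictly one takes a projective resolution of $M_{-1}$, but since we only need projectives as the entries of the complexes, the point is that $K_{M_{-1}}$ denotes $\xymatrix{P\ar[r]^1&P}$-type data assembled from the resolution of $M_{-1}$). The map $C_{M_{-1}}\to C_{M_0}\oplus K_{M_{-1}}$ is built from the chain map lifting $f$ together with the identity component, and the map $C_{M_0}\oplus K_{M_{-1}}\to M_\bullet$ uses the augmentations $P_{M_0}\to M_0$ and $P_{M_{-1}}\to M_{-1}$. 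I would then check that in each of the two degrees the resulting sequence of projective objects in $\A$ is a split short exact sequence, so that the total sequence is short exact in $C_2(\P)$; the fact that $C_{M_0}\oplus K_{M_{-1}}$ is projective in $C_2(\P)$ is exactly Lemma \ref{proinj}, so this is a genuine projective presentation.

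For the second sequence, I would dualize the construction: the injective resolution should express $M_\bullet$ as the kernel of a map $Z_{M_{-1}}\oplus K_{M_0}\to Z_{M_0}$, where $Z_{M_{-1}}\oplus K_{M_0}$ is injective in $C_2(\P)$ by Lemma \ref{proinj}. Concretely, $Z_{M_{-1}}\oplus K_{M_0}$ is the morphism $\xymatrix{M_{-1}\oplus M_0\ar[r]&M_0}$ with the second component the identity, $Z_{M_0}$ is $\xymatrix{M_0\ar[r]&0}$, and the map between them is the projection $M_{-1}\oplus M_0\to M_0$ composed with something encoding $f$; the embedding $M_\bullet\hookrightarrow Z_{M_{-1}}\oplus K_{M_0}$ sends $(M_{-1}\xrightarrow{f}M_0)$ via $(1,f)$ in degree $-1$ and $1$ in degree $0$. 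Again I would verify exactness degreewise. Alternatively, and perhaps more cleanly, I would simply cite the statement as \cite[Proposition 3.2]{Bau} since the lemma is attributed there, and only sketch the verification: the content is that the specific six-term complexes written down have zero homology, which reduces to the elementary fact that the sequences $\xymatrix{0\ar[r]&A\ar[r]^{(1,f)}&A\oplus B\ar[r]^{(−f,1)}&B\ar[r]&0}$ are split exact.

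The main obstacle, such as it is, is bookkeeping rather than genuine difficulty: one must be careful about what ``$K_{M_{-1}}$'' and ``$Z_{M_{-1}}$'' mean when $M_{-1}$ is already projective (which it is, since $M_\bullet\in C_2(\P)$), so that $K_{M_{-1}}=\xymatrix{M_{-1}\ar[r]^1&M_{-1}}$ and $Z_{M_{-1}}=\xymatrix{M_{-1}\ar[r]&0}$ are literally the objects defined in equation (2.4), and then to confirm that the connecting maps are well-defined morphisms in $C_2(\P)$, i.e.\ that the relevant squares commute. Once the maps are written down the exactness is immediate from the split-exactness of the mapping-cone-type sequences above, so I expect the proof to be short. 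I would present it by displaying the two commutative diagrams of complexes and noting degreewise split exactness, deferring to \cite{Bau} for the routine checks.
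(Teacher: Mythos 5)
Your proposal is correct, and in fact the paper offers no argument of its own here: Lemma \ref{resol} is simply quoted from \cite[Proposition 3.2]{Bau}, so your explicit verification supplies exactly the routine check the paper omits. Your identification of the crux is right: since $M_{\bullet}\in C_2(\P)$ the entries $M_{-1},M_0$ are projective, the two sequences are degreewise of mapping-cylinder type, and exactness reduces to the split exactness of $0\to A\xrightarrow{\binom{1}{f}}A\oplus B\xrightarrow{(-f,\,1)}B\to 0$. Two clean-up remarks. First, the detour through $\Omega_{M_0},P_{M_0}$ and a ``resolution of $M_{-1}$'' in your first paragraph is unnecessary and slightly misleading: because $M_0$ is projective its minimal projective resolution is $0\to 0\to M_0\to M_0\to 0$, so $C_{M_0}=(0\to M_0)$ and $C_{M_{-1}}=(0\to M_{-1})$ literally, and $K_{M_{-1}}=(M_{-1}\xrightarrow{1}M_{-1})$, $Z_{M_{-1}}=(M_{-1}\to 0)$ are the objects of (2.4) with no auxiliary data; you do say this correctly in your last paragraph, and the proof should be written that way from the start. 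Second, the maps should be pinned down rather than described as ``augmentations'' or ``something encoding $f$'': for the first sequence take, in degree $0$, $M_{-1}\xrightarrow{\binom{-f}{1}}M_0\oplus M_{-1}\xrightarrow{(1,\,f)}M_0$ (so the component $K_{M_{-1}}\to M_{\bullet}$ is $(1,f)$, i.e.\ $f$ in degree $0$, which is what makes the square with the differential $(0,1)$ of $C_{M_0}\oplus K_{M_{-1}}$ commute), and for the second take, in degree $-1$, $M_{-1}\xrightarrow{\binom{1}{f}}M_{-1}\oplus M_0\xrightarrow{(-f,\,1)}M_0$ with the identity in degree $0$ on the $K_{M_0}$ summand. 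With the maps made explicit, commutativity of the squares and degreewise split exactness are immediate, and your proof is complete and in substance the same verification that \cite{Bau} records.
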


By Lemma \ref{resol}, we know that the global dimension of $C_2(\P)$ is equal to one. Similarly to the Euler form of $\A$ defined in Section 2.1, for any objects $M_\bullet,N_\bullet\in C_2(\P)$, we define
\begin{equation}
\lr{M_\bullet,N_\bullet}:=\dim_k\Hom_{C_2(\P)}(M_\bullet,N_\bullet)-\dim_k\Ext_{C_2(\P)}^1(M_\bullet,N_\bullet).
\end{equation}
It also induces a bilinear form on the Grothendieck group of $C_2(\P)$. Here we use the same notation as for the Euler form on $\A$, since this should not cause confusion by the context.

\begin{lemma}
For any $M_\bullet,N_\bullet\in C_2(\P)$,
\begin{equation*}\lr{M_\bullet,N_\bullet}=\dim_k\Hom_{\A}(M_0,N_0)+\dim_k\Hom_{\A}(M_{-1},N_{-1})-\dim_k\Hom_{\A}(M_{-1},N_0).\end{equation*}
\end{lemma}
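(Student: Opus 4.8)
The plan is to read the Euler form off the length-one projective resolution of $M_\bullet$ supplied by Lemma \ref{resol}. Recall from (\ref{tfj}) the short exact sequence
$$0\longrightarrow C_{M_{-1}}\longrightarrow C_{M_0}\oplus K_{M_{-1}}\longrightarrow M_{\bullet}\longrightarrow0$$
in $C_2(\P)$. Applying $\Hom_{C_2(\P)}(-,N_\bullet)$ to it produces a six-term exact sequence (the degree-$2$ extension groups vanish because $C_2(\P)$ has global dimension one), so comparing alternating sums of $k$-dimensions shows that $\lr{-,N_\bullet}$ is additive on short exact sequences. Applying this additivity to (\ref{tfj}) gives
$$\lr{M_\bullet,N_\bullet}=\lr{C_{M_0},N_\bullet}+\lr{K_{M_{-1}},N_\bullet}-\lr{C_{M_{-1}},N_\bullet}.$$

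It then remains to evaluate the three Euler forms on the right. Since $M_{-1},M_0\in\P$, Lemma \ref{proinj} tells us that $C_{M_{-1}}$, $C_{M_0}$ and $K_{M_{-1}}$ are projective objects of $C_2(\P)$; hence their $\Ext^1$ into $N_\bullet$ vanishes and each of the three Euler forms is simply the $k$-dimension of a $\Hom$-space. (Concretely, for a projective $P\in\P$ the minimal projective resolution (\ref{mpr}) is trivial, so $\Omega_P=0$, $P_P=P$, and $C_P$ is the complex whose degree $-1$ term is $0$ and degree $0$ term is $P$.) By the $\Hom$-formulas of Lemma \ref{hom}, with $N_\bullet$ in place of the generic complex there, we obtain $\dim_k\Hom_{C_2(\P)}(C_{M_0},N_\bullet)=\dim_k\Hom_{\A}(M_0,N_0)$, $\dim_k\Hom_{C_2(\P)}(C_{M_{-1}},N_\bullet)=\dim_k\Hom_{\A}(M_{-1},N_0)$, and $\dim_k\Hom_{C_2(\P)}(K_{M_{-1}},N_\bullet)=\dim_k\Hom_{\A}(M_{-1},N_{-1})$. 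Substituting these into the previous display yields
$$\lr{M_\bullet,N_\bullet}=\dim_k\Hom_{\A}(M_0,N_0)+\dim_k\Hom_{\A}(M_{-1},N_{-1})-\dim_k\Hom_{\A}(M_{-1},N_0),$$
as claimed.

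I do not expect a real obstacle. The only two points deserving care are: the additivity of $\lr{-,N_\bullet}$, which is the routine Euler-characteristic computation and is available precisely because Lemma \ref{resol} shows every object of $C_2(\P)$ has projective dimension at most one; and the bookkeeping fact that the terms $C_{M_{-1}},C_{M_0}$ of (\ref{tfj}) really are the projective objects of Lemma \ref{proinj}, so that the $\Hom$-formulas of Lemma \ref{hom} apply to them directly. Equivalently, one may run the argument in the Grothendieck group $K(C_2(\P))$, on which $\lr{-,-}$ is well defined since global dimension is finite: (\ref{tfj}) gives $[M_\bullet]=[C_{M_0}]+[K_{M_{-1}}]-[C_{M_{-1}}]$, and the same substitutions finish the proof.
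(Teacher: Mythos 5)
Your proof is correct and follows essentially the same route as the paper: apply $\Hom_{C_2(\P)}(-,N_\bullet)$ to the projective resolution (\ref{tfj}) from Lemma \ref{resol}, use that the middle term is projective (so the Euler form is the alternating sum of the two $\Hom$-dimensions), and evaluate via Lemma \ref{hom}. Your extra framing through additivity of $\lr{-,N_\bullet}$ and the explicit appeal to Lemma \ref{proinj} are just slightly more verbose packaging of the same computation.
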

\begin{proof}
By Lemma \ref{resol}, we have a projective resolution (\ref{tfj}) of $M_{\bullet}$. Applying the functor $\Hom_{C_2(\P)}(-,N_\bullet)$ to the short exact sequence (\ref{tfj}), we obtain a long exact sequence
$$0\to\Hom(M_\bullet,N_\bullet)\to\Hom(C_{M_0}\oplus K_{M_{-1}},N_\bullet)\to\Hom(C_{M_{-1}},N_\bullet)\to\Ext^1(M_\bullet,N_\bullet)\to0,$$ since $C_{M_0}\oplus K_{M_{-1}}$ is projective.
Hence,
\begin{flalign*}
\lr{M_\bullet,N_\bullet}&=\dim_k\Hom_{C_2(\P)}(M_\bullet,N_\bullet)-\dim_k\Ext_{C_2(\P)}^1(M_\bullet,N_\bullet)\\
&=\dim_k\Hom_{C_2(\P)}(C_{M_0}\oplus K_{M_{-1}},N_{\bullet})-\dim_k\Hom_{C_2(\P)}(C_{M_{-1}},N_{\bullet}).
\end{flalign*}
Thus, by Lemma \ref{hom}, we complete the proof.
\end{proof}

\begin{lemma}\label{ext}
For any $M,N\in\A$ and $P\in\P$, there are the following
isomorphisms of vector spaces
\begin{flalign}
&\Ext^1_{C_2(\P)}(C_M,C_N)\cong\Ext^1_{\A}(M,N);\\
&\Ext^1_{C_2(\P)}(Z_P,C_M)\cong\Hom_{\A}(P,M)\label{e2}.
\end{flalign}
\end{lemma}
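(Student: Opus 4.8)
The plan is to compute both extension groups by applying suitable $\Hom$-functors to the short exact sequences furnished by Lemma \ref{resol}, and then reading off the cokernel/kernel terms via the $\Hom$-formulas of Lemma \ref{hom}, together with the $\Ext^1$-dimension count already established. For the first isomorphism, I would start from the projective resolution $0\to C_{M_{-1}}\to C_{M_0}\oplus K_{M_{-1}}\to M_\bullet\to 0$ of $M_\bullet$ specialized to $M_\bullet=C_M$: since $C_M$ has $C_{M\text{, }-1}=\Omega_M$ and $C_{M\text{, }0}=P_M$, and since $\Omega_M,P_M$ are projective, one has $C_{\Omega_M}=Z_{\Omega_M}$-type degenerate pieces $K_{\Omega_M}\oplus(\text{nothing})$... — more cleanly, the minimal projective resolution of a projective $P$ is just $0\to 0\to P\to P\to 0$, so $C_P\cong K_P$. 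Thus the resolution (\ref{tfj}) for $C_M$ reads $0\to K_{\Omega_M}\to K_{P_M}\oplus K_{\Omega_M}\to C_M\to 0$, a split-looking complex of projective-injectives that recovers $C_M$ as a two-term "complex" in $C_2(\P)$; applying $\Hom_{C_2(\P)}(-,C_N)$ and using Lemma \ref{hom} (identities \eqref{v_1}, \eqref{v_5}) reduces the $\Ext^1$ to the cokernel of $\Hom_\A(P_M,\Omega_N)\oplus\Hom_\A(\Omega_M,P_N)\to\Hom_\A(\Omega_M,\Omega_N)$, which by the long exact sequence for the minimal projective resolution of $M$ applied to $\Hom_\A(-,N)$ is exactly $\Ext^1_\A(M,N)$. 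Alternatively — and this is probably the cleanest route — I would invoke the isomorphism chain already displayed in the excerpt, $\Ext^1_{C_2(\P)}(M_\bullet,N_\bullet)\cong\Hom_{D^b(\A)}(M_\bullet,N_\bullet[1])$, under which $C_M\simeq M$ and $C_N\simeq N$ in $D^b(\A)$ (each $C_X$ is the two-term projective resolution of $X$, hence quasi-isomorphic to $X$), so that $\Ext^1_{C_2(\P)}(C_M,C_N)\cong\Hom_{D^b(\A)}(M,N[1])\cong\Ext^1_\A(M,N)$ since $\A$ is hereditary.

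For the second isomorphism $\Ext^1_{C_2(\P)}(Z_P,C_M)\cong\Hom_\A(P,M)$, the same $D^b(\A)$-translation applies: $Z_P$ is the complex $P\to 0$ concentrated in degree $-1$, i.e. $Z_P\simeq P[1]$ in $D^b(\A)$, while $C_M\simeq M$, so $\Ext^1_{C_2(\P)}(Z_P,C_M)\cong\Hom_{D^b(\A)}(P[1],M[1])\cong\Hom_\A(P,M)$. If one prefers to stay inside $C_2(\P)$, one applies $\Hom_{C_2(\P)}(Z_P,-)$ to the resolution (\ref{tfj}) of $C_M$ rewritten as $0\to K_{\Omega_M}\to K_{P_M}\oplus K_{\Omega_M}\to C_M\to 0$; since $Z_P$ maps trivially into the projective-injectives $K_{\bullet}$ only through their degree-$(-1)$ components... — actually one wants the \emph{other} sequence there, the injective resolution, or one uses the dimension count: $\Ext^1_{C_2(\P)}(Z_P,C_M)$ has dimension $\dim\Hom_{C_2(\P)}(Z_P,C_M)-\lr{Z_P,C_M}$, and by Lemma \ref{hom} there is no nonzero map $Z_P\to C_M$ while the Euler-form lemma gives $\lr{Z_P,C_M}=-\dim_k\Hom_\A(P,M)$ (the only surviving term is $-\dim_k\Hom_\A((Z_P)_{-1},(C_M)_0)=-\dim_k\Hom_\A(P,P_M)$, and one then peels off $\Hom_\A(P,\Omega_M)$), yielding the claim.

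The main obstacle I anticipate is purely bookkeeping: making the identification $C_P\cong K_P$ and the resulting degeneration of the resolution (\ref{tfj}) precise, and then tracking which of the five $\Hom$-identities in Lemma \ref{hom} feeds which term, so that the cokernel computed from the long exact sequence genuinely matches $\Ext^1_\A(M,N)$ rather than something off by a $\Hom_\A(P_M,\Omega_N)$-summand. The derived-category shortcut sidesteps all of this, so I would present that as the main argument and relegate the explicit $C_2(\P)$-level verification (if desired at all) to a remark; the only genuine input needed is that $\A$ hereditary forces $\Hom_{D^b(\A)}(X,Y[i])=0$ for $i\neq 0,1$ and that each $C_X$ is a projective resolution of $X$, both of which are recorded earlier in the excerpt.
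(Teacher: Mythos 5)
Your main argument (the derived-category route) is exactly the paper's proof: both isomorphisms follow from $\Ext^1_{C_2(\P)}(-,-)\cong\Ext^1_{C^b(\P)}(-,-)\cong\Hom_{K^b(\P)}(-,-[1])\cong\Hom_{D^b(\A)}(-,-[1])$ together with the quasi-isomorphisms $C_M\simeq M$, $C_N\simeq N$, $Z_P\simeq P[1]$ and the heredity of $\A$, just as in the paper. One caveat about your optional in-category sketch (which you rightly relegate to a remark): for projective $P$ the minimal resolution $0\to 0\to P\to P\to 0$ gives $C_P=(0\to P)$, \emph{not} $C_P\cong K_P$ (these are distinct indecomposable projectives of $C_2(\P)$ by Lemma \ref{proinj}), so the resolution (\ref{tfj}) for $C_M$ reads $0\to C_{\Omega_M}\to C_{P_M}\oplus K_{\Omega_M}\to C_M\to 0$ rather than the all-$K$ form you wrote; this slip does not affect your main argument.
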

\begin{proof}
Firstly,
\begin{flalign*}\Ext^1_{C_2(\P)}(C_M,C_N)&\cong\Ext^1_{C^b(\P)}(C_M,C_N)\\
&\cong\Hom_{K^b(\P)}(C_M,C_N[1])\\
&\cong\Hom_{D^b(\A)}(M,N[1])\\
&\cong\Ext^1_{\A}(M,N).\end{flalign*}

Secondly,
\begin{flalign*}\Ext^1_{C_2(\P)}(Z_P,C_M)&\cong\Ext^1_{C^b(\P)}(Z_P,C_M)\\
&\cong\Hom_{K^b(\P)}(Z_P,C_M[1])\\
&\cong\Hom_{D^b(\A)}(P[1],M[1])\\
&\cong\Hom_{\A}(P,M).\end{flalign*}
\end{proof}
\begin{proposition}\label{kuozhang0}
Let $M_\bullet, N_\bullet\in C_2(\P)$. Then
\begin{equation}
\Ext^1_{C_2(\P)}(M_\bullet, N_\bullet)\cong\Hom_{\A}(H_{-1}(M_\bullet),H_{0}(N_\bullet))
\oplus\Ext^1_{\A}(H_{0}(M_\bullet),H_{0}(N_\bullet)).
\end{equation}
\end{proposition}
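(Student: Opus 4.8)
The plan is to reduce the computation of $\Ext^1_{C_2(\P)}(M_\bullet,N_\bullet)$ to the already-established special cases of Lemma~\ref{ext} by using the direct sum decomposition of Proposition~\ref{fenjie}. Write $M_\bullet\cong K_{P}\oplus Z_{Q}\oplus C_{M}$ and $N_\bullet\cong K_{P'}\oplus Z_{Q'}\oplus C_{N}$ for suitable projectives and modules. Since $\Ext^1_{C_2(\P)}(-,-)$ is additive in each variable, the whole extension group splits as a direct sum of nine pieces. Now, $K_{P}$ is projective (Lemma~\ref{proinj}), so every term with $K_P$ in the first argument vanishes; similarly $K_{P'}$ is injective, so every term with $K_{P'}$ in the second argument vanishes. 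This kills five of the nine summands, leaving $\Ext^1(Z_Q,Z_{Q'})$, $\Ext^1(Z_Q,C_N)$, $\Ext^1(C_M,Z_{Q'})$, and $\Ext^1(C_M,C_N)$.

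Next I would dispatch these four remaining terms. We already have $\Ext^1_{C_2(\P)}(C_M,C_N)\cong\Ext^1_{\A}(M,N)$ and $\Ext^1_{C_2(\P)}(Z_Q,C_N)\cong\Hom_{\A}(Q,N)$ from Lemma~\ref{ext}. For $\Ext^1(C_M,Z_{Q'})$: since $Z_{Q'}$ sits in degree $-1$, the complex $C_M[1]$ has $P_M$ in degree $-1$ and $\Omega_M$ in degree $-2$, so $\Hom_{D^b(\A)}(C_M,Z_{Q'}[1])\cong\Hom_{D^b(\A)}(M[0],Q'[1])\cong\Ext^1_{\A}(M,Q')=0$ because $Q'$ is projective; alternatively one checks directly in $C^b(\P)$ that any chain map $C_M\to Z_{Q'}[1]$ is null-homotopic. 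Likewise $\Ext^1(Z_Q,Z_{Q'})\cong\Hom_{D^b(\A)}(Q[1],Q'[1])\cong\Hom_{\A}(Q,Q')$, but I need this to be absorbed correctly: indeed $Z_{Q'}$ is injective in $C_2(\P)$ only relative to $K$-type objects, so this term genuinely contributes and must match the claimed formula. I would then compute the homology: $H_{-1}(M_\bullet)=\Ker(f)=Q$ and $H_0(M_\bullet)=\Coker(f)=M$ (using that $C_M$ and $K_P$ have no homology in degree $-1$ and that $K_P$, $Z_Q$ have trivial $H_0$), and similarly $H_{-1}(N_\bullet)=Q'$, $H_0(N_\bullet)=N$. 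Substituting, the surviving terms assemble to $\Hom_{\A}(Q,Q')\oplus\Hom_{\A}(Q,N)\oplus\Ext^1_{\A}(M,N)$, and I would reconcile this with $\Hom_{\A}(H_{-1}M_\bullet,H_0N_\bullet)\oplus\Ext^1_{\A}(H_0M_\bullet,H_0N_\bullet)=\Hom_{\A}(Q,N)\oplus\Ext^1_{\A}(M,N)$.

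The apparent mismatch in the previous paragraph is exactly where the main obstacle lies: the naive term $\Ext^1(Z_Q,Z_{Q'})$ must in fact vanish for the stated formula to hold. The resolution is that $Z_{Q'}$ is injective in $C_2(\P)$ — by Lemma~\ref{proinj}, $Z_P$ and $K_P$ together exhaust the indecomposable injectives — so $\Ext^1_{C_2(\P)}(-,Z_{Q'})=0$ identically, and in particular $\Ext^1(Z_Q,Z_{Q'})=0$ and $\Ext^1(C_M,Z_{Q'})=0$ both follow at once from injectivity, without any derived-category computation. Thus the only surviving summands are $\Ext^1(Z_Q,C_N)\cong\Hom_{\A}(Q,N)$ and $\Ext^1(C_M,C_N)\cong\Ext^1_{\A}(M,N)$, which is precisely $\Hom_{\A}(H_{-1}(M_\bullet),H_0(N_\bullet))\oplus\Ext^1_{\A}(H_0(M_\bullet),H_0(N_\bullet))$. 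The one remaining care-point is to verify the homology identifications $H_{-1}(K_P\oplus Z_Q\oplus C_M)=Q$ and $H_0(K_P\oplus Z_Q\oplus C_M)=M$ cleanly, which is immediate from the definitions of $K_P$, $Z_Q$, $C_M$ and additivity of homology.
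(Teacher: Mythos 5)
Your final argument is correct and is essentially the paper's own proof: decompose $M_\bullet$ and $N_\bullet$ by Proposition \ref{fenjie}, discard the summands involving $K_P$ (projective) and $K_{P'}$, $Z_{Q'}$ (injective, by Lemma \ref{proinj}), and compute the two surviving terms $\Ext^1_{C_2(\P)}(Z_Q,C_N)\cong\Hom_{\A}(Q,N)$ and $\Ext^1_{C_2(\P)}(C_M,C_N)\cong\Ext^1_{\A}(M,N)$ via Lemma \ref{ext}, matching them with $H_{-1}(M_\bullet)\cong Q$ and $H_0(M_\bullet)\cong M$. One caveat: the derived-category detour in your middle paragraph, which you ultimately discard, contains shift errors --- $Z_{Q'}[1]$ corresponds to $Q'[2]$ in $D^b(\A)$, so the relevant groups are $\Hom_{D^b(\A)}(M,Q'[2])\cong\Ext^2_{\A}(M,Q')=0$ (by heredity, not because $Q'$ is projective; $\Ext^1_{\A}(M,Q')$ need not vanish) and $\Hom_{D^b(\A)}(Q[1],Q'[2])\cong\Ext^1_{\A}(Q,Q')=0$, not $\Hom_{\A}(Q,Q')$ --- but since your final argument relies only on the injectivity of $Z_{Q'}$, none of this affects the proof.
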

\begin{proof}
By Proposition \ref{fenjie}, we write $M_\bullet=K_{P}\oplus Z_Q\oplus C_M$ and $N_\bullet=K_{T}\oplus Z_W\oplus C_N$ for some $P,Q,T,W\in\P$ and $M,N\in\A$. Then
\begin{flalign*}
\Ext^1_{C_2(\P)}(M_\bullet, N_\bullet)&=\Ext^1_{C_2(\P)}(K_{P}\oplus Z_Q\oplus C_M,K_{T}\oplus Z_W\oplus C_N)\\
&\cong\Ext^1_{C_2(\P)}(Z_Q\oplus C_M,C_N)\\
&\cong\Hom_{\A}(Q,N)\oplus\Ext_{\A}^1(M,N)\\
&\cong\Hom_{\A}(H_{-1}(M_\bullet),H_{0}(N_\bullet))
\oplus\Ext^1_{\A}(H_{0}(M_\bullet),H_{0}(N_\bullet)).
\end{flalign*}
\end{proof}

\section{Hall algebras of morphisms}
Let $\H(C_2(\A))$ be the Hall algebra of the abelian category $C_2(\A)$ as defined in Definition \ref{Hall algebra of abelian category}. Let $\H(C_2(\P))$ be the submodule of $\H(C_2(\A))$ spanned by the isomorphism classes of objects in $C_2(\P)$. Since $C_2(\P)$ is closed under extensions, $\H(C_2(\P))$ is a subalgebra of the Hall algebra $\H(C_2(\A))$.
Define $\H_{\tw}(C_2(\P))$ to be the same module as $\H(C_2(\P))$, but with the twisted multiplication
$$[M_\bullet]\ast[N_\bullet]=q^{\lr{M_\bullet,N_\bullet}}[M_\bullet]\diamond[N_\bullet].$$

\begin{lemma}\label{KM}
For any $M_\bullet\in C_2(\P)$ and $P\in\P$, we have that in $\H_{\tw}(C_2(\P))$
\begin{equation}\label{jh}
[K_P]\ast[M_\bullet]=[M_\bullet]\ast[K_P]=[K_P\oplus M_\bullet].
\end{equation}
\end{lemma}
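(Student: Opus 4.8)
The statement to prove is that $[K_P]\ast[M_\bullet]=[M_\bullet]\ast[K_P]=[K_P\oplus M_\bullet]$ in $\H_{\tw}(C_2(\P))$. The plan is to reduce the twisted identity to an untwisted one and then to a statement about Hall numbers, namely that $K_P$ behaves multiplicatively like a ``free'' summand: the only extensions of $K_P$ by $M_\bullet$ (and of $M_\bullet$ by $K_P$) are split. Since $K_P$ is projective-injective in $C_2(\P)$ by Lemma \ref{proinj}, we have $\Ext^1_{C_2(\P)}(K_P,M_\bullet)=0$ and $\Ext^1_{C_2(\P)}(M_\bullet,K_P)=0$ for every $M_\bullet\in C_2(\P)$. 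Hence in the formula of Definition \ref{Hall algebra of abelian category} the sum $[K_P]\diamond[M_\bullet]=\sum_{[L]}\frac{|\Ext^1_{C_2(\P)}(K_P,M_\bullet)_L|}{|\Hom_{C_2(\P)}(K_P,M_\bullet)|}[L]$ collapses: the numerator is nonzero only for the class $L=K_P\oplus M_\bullet$ coming from the split sequence, and there the numerator equals $1$ while the denominator is $|\Hom_{C_2(\P)}(K_P,M_\bullet)|$.

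First I would compute the relevant homs and the Euler form. By \eqref{v_1} of Lemma \ref{hom}, $|\Hom_{C_2(\P)}(K_P,M_\bullet)|=|\Hom_{\A}(P,M_{-1})|$, and by \eqref{v_4}-type reasoning (the second identity of Lemma \ref{hom}) $|\Hom_{C_2(\P)}(M_\bullet,K_P)|=|\Hom_{\A}(M_0,P)|$. For the twist I need $\lr{K_P,M_\bullet}$ and $\lr{M_\bullet,K_P}$; using the formula $\lr{M_\bullet,N_\bullet}=\dim_k\Hom_{\A}(M_0,N_0)+\dim_k\Hom_{\A}(M_{-1},N_{-1})-\dim_k\Hom_{\A}(M_{-1},N_0)$ from the lemma preceding Lemma \ref{ext}, with $K_P$ having both components $P$ and identity between them, gives $\lr{K_P,M_\bullet}=\dim_k\Hom_{\A}(P,M_0)+\dim_k\Hom_{\A}(P,M_{-1})-\dim_k\Hom_{\A}(P,M_0)=\dim_k\Hom_{\A}(P,M_{-1})$, and symmetrically $\lr{M_\bullet,K_P}=\dim_k\Hom_{\A}(M_0,P)+\dim_k\Hom_{\A}(M_{-1},P)-\dim_k\Hom_{\A}(M_{-1},P)=\dim_k\Hom_{\A}(M_0,P)$. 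So $q^{\lr{K_P,M_\bullet}}=|\Hom_{\A}(P,M_{-1})|=|\Hom_{C_2(\P)}(K_P,M_\bullet)|$ and $q^{\lr{M_\bullet,K_P}}=|\Hom_{\A}(M_0,P)|=|\Hom_{C_2(\P)}(M_\bullet,K_P)|$.

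Then I would assemble: since $\Ext^1_{C_2(\P)}(K_P,M_\bullet)=0$, the only $L$ with $\Ext^1_{C_2(\P)}(K_P,M_\bullet)_L\neq\emptyset$ is $L\cong K_P\oplus M_\bullet$, with $|\Ext^1_{C_2(\P)}(K_P,M_\bullet)_L|=1$, so
\[
[K_P]\ast[M_\bullet]=q^{\lr{K_P,M_\bullet}}\cdot\frac{1}{|\Hom_{C_2(\P)}(K_P,M_\bullet)|}\,[K_P\oplus M_\bullet]=[K_P\oplus M_\bullet],
\]
and identically $[M_\bullet]\ast[K_P]=q^{\lr{M_\bullet,K_P}}\cdot\frac{1}{|\Hom_{C_2(\P)}(M_\bullet,K_P)|}\,[K_P\oplus M_\bullet]=[K_P\oplus M_\bullet]$. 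One small point to check is that the split sequence $0\to M_\bullet\to K_P\oplus M_\bullet\to K_P\to 0$ indeed gives exactly one equivalence class in $\Ext^1$, which is immediate since the whole group is zero; there is no possibility of $K_P\oplus M_\bullet$ arising as a nonsplit extension, nor of a different middle term.

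\textbf{Main obstacle.} There is essentially no obstacle: the content is entirely the vanishing of $\Ext^1$ against the projective-injective object $K_P$ (Lemma \ref{proinj}) together with the bookkeeping that the Euler-form twist cancels the automorphism/Hom denominator. The only mildly delicate step is making sure the Euler form computations for $\lr{K_P,M_\bullet}$ and $\lr{M_\bullet,K_P}$ are carried out with the correct component identifications for $K_P=(P\xrightarrow{1}P)$, but this is a direct substitution into the stated formula for $\lr{\cdot,\cdot}$ on $C_2(\P)$.
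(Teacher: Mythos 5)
Your proof is correct and follows the same route as the paper: since $K_P$ is projective-injective (Lemma \ref{proinj}), $\Ext^1_{C_2(\P)}$ into or out of $K_P$ vanishes, so the only middle term is the split one and the Euler-form twist cancels the $\Hom$ denominator. The only difference is cosmetic: you compute $\lr{K_P,M_\bullet}$ and $\lr{M_\bullet,K_P}$ via the componentwise formula and Lemma \ref{hom}, whereas one can note directly that the vanishing of $\Ext^1$ gives $\lr{K_P,M_\bullet}=\dim_k\Hom_{C_2(\P)}(K_P,M_\bullet)$, which is exactly what cancels.
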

\begin{proof}
Since $K_P$ is projective-injective,
\begin{flalign*}
[K_P]\ast[M_\bullet]&=q^{\lr{K_P,M_\bullet}}\frac{1}{|\Hom_{C_2(\P)}(K_P,M_\bullet)|}[K_P\oplus M_\bullet]\\
&=[K_P\oplus M_\bullet].
\end{flalign*}
Similarly, $[M_\bullet]\ast[K_P]=[K_P\oplus M_\bullet].$ Thus, we complete the proof.
\end{proof}
\begin{lemma}\label{ybasis}
For any $P,Q\in\P$ and $M\in\A$, we have that in $\H_{\tw}(C_2(\P))$
$$[K_P\oplus Z_Q\oplus C_M]=[K_P]\ast[C_M]\ast[Z_Q].$$
\end{lemma}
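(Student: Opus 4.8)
The plan is to compute the product $[K_P]\ast[C_M]\ast[Z_Q]$ step by step, using Lemma \ref{KM} for the factor $[K_P]$ and a direct Hall-multiplication argument for $[C_M]\ast[Z_Q]$. First I would apply Lemma \ref{KM} to absorb $[K_P]$: since $K_P$ is projective-injective, $[K_P]\ast\big([C_M]\ast[Z_Q]\big)=[K_P\oplus X_\bullet]$ summed over whatever appears in $[C_M]\ast[Z_Q]$, so the whole problem reduces to showing $[C_M]\ast[Z_Q]=[Z_Q\oplus C_M]$ in $\H_{\tw}(C_2(\P))$.

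For that, I would unwind the definition of the twisted product: $[C_M]\ast[Z_Q]=q^{\lr{C_M,Z_Q}}|\Hom_{C_2(\P)}(C_M,Z_Q)|^{-1}\sum_{[L_\bullet]}|\Ext^1_{C_2(\P)}(C_M,Z_Q)_{L_\bullet}|\,[L_\bullet]$. The key observation is that $\Ext^1_{C_2(\P)}(C_M,Z_Q)=0$: indeed by Lemma \ref{ext} (or rather the derived-category identification of $\Ext^1$ preceding it) one has $\Ext^1_{C_2(\P)}(C_M,Z_Q)\cong\Hom_{D^b(\A)}(M,Q[2])=0$ since $\A$ is hereditary; alternatively one can read this off Proposition \ref{kuozhang0}, noting $H_{-1}(C_M)=\Omega_M$ need not vanish but $H_0(C_M)=M$ and $H_0(Z_Q)=0$ while the relevant $\Hom$ term pairs $H_{-1}(C_M)$ with $H_0(Z_Q)=0$ — wait, one must check the orientation, so I would instead rely on the cleaner fact that $Z_Q$ is injective in $C_2(\P)$ by Lemma \ref{proinj}, hence $\Ext^1_{C_2(\P)}(-,Z_Q)=0$ identically. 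Therefore the only term surviving is the trivial extension $L_\bullet=C_M\oplus Z_Q$ with coefficient $1$, giving $[C_M]\ast[Z_Q]=q^{\lr{C_M,Z_Q}}|\Hom_{C_2(\P)}(C_M,Z_Q)|^{-1}[Z_Q\oplus C_M]$.

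It then remains to check that the scalar $q^{\lr{C_M,Z_Q}}|\Hom_{C_2(\P)}(C_M,Z_Q)|^{-1}$ equals $1$, i.e.\ that $q^{\lr{C_M,Z_Q}}=|\Hom_{C_2(\P)}(C_M,Z_Q)|$. Since $\Ext^1_{C_2(\P)}(C_M,Z_Q)=0$, the Euler form reduces to $\lr{C_M,Z_Q}=\dim_k\Hom_{C_2(\P)}(C_M,Z_Q)$, so the identity $q^{\lr{C_M,Z_Q}}=|\Hom_{C_2(\P)}(C_M,Z_Q)|=q^{\dim_k\Hom_{C_2(\P)}(C_M,Z_Q)}$ holds automatically over $\F_q$. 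This finishes the computation, and combining with the first step gives $[K_P]\ast[C_M]\ast[Z_Q]=[K_P\oplus Z_Q\oplus C_M]$ as claimed; by associativity of $\H_{\tw}(C_2(\P))$ the bracketing is immaterial. The only genuinely delicate point is making sure the vanishing of $\Ext^1_{C_2(\P)}(C_M,Z_Q)$ is invoked correctly — the safest route is the injectivity of $Z_Q$ from Lemma \ref{proinj}, which sidesteps any sign/orientation subtlety in Proposition \ref{kuozhang0}.
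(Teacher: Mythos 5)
Your proposal is correct and follows essentially the same route as the paper: the vanishing of $\Ext^1_{C_2(\P)}(C_M,Z_Q)$ (which you justify soundly via the injectivity of $Z_Q$ from Lemma \ref{proinj}, or equivalently via $\Hom_{D^b(\A)}(M,Q[2])=0$) gives $[C_M]\ast[Z_Q]=[C_M\oplus Z_Q]$ after the scalar cancellation you check, and Lemma \ref{KM} then absorbs $[K_P]$. Only a small aside in your hedging is off: $H_{-1}(C_M)=\Ker(\delta_M)=0$ since $\delta_M$ is injective, so the route through Proposition \ref{kuozhang0} would also have worked directly.
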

\begin{proof}
Since $\Ext_{C_2(\P)}^1(C_M,Z_Q)=0$, we obtain that
\begin{flalign*}[C_M]\ast[Z_Q]&=q^{\lr{C_M,Z_Q}}\frac{1}{|\Hom_{C_2(\P)}(C_M,Z_Q)|}[C_M\oplus Z_Q]\\&=[C_M\oplus Z_Q].\end{flalign*}
Thus, by Lemma \ref{KM}, $$[K_P\oplus Z_Q\oplus C_M]=[K_P]\ast[C_M\oplus Z_Q]=[K_P]\ast[C_M]\ast[Z_Q].$$
\end{proof}

Define the {\em localized Hall algebra} $\M\H(\A)$ to be the localization of $\H_{\tw}(C_2(\P))$ with respect to all elements $[K_P]$. In symbols,
$$\M\H(\A):=\H_{\tw}(C_2(\P))[[K_P]^{-1}: P\in\P].$$

For each $\alpha\in K(\A)$, by writing $\alpha=\hat{P}-\hat{Q}$ for some objects $P,Q\in\P$, we define
$$K_{\alpha}:=[K_P]\ast[K_Q]^{-1}.$$ Then for any $\alpha,\beta\in K(\A)$ and $M_\bullet\in C_2(\P)$, \begin{flalign}&K_\alpha\ast K_\beta=K_{\alpha+\beta}=K_\beta\ast K_\alpha;\label{KK}\\
&K_\alpha\ast[M_\bullet]=[M_\bullet]\ast K_\alpha.\label{KX}\end{flalign}

For each $M\in\A$, we define
$$\X_M:=K_{-\hat{P}_M}\ast[C_M]\in\M\H(\A).$$
Suppose we take a different, not necessarily minimal, projective resolution (\ref{mpr}), and consider the corresponding object, denoted by $C'_M$, in $C_2(\P)$. By Lemma \ref{jx}, $[C'_M]=[K_R\oplus C_M]$ for some $R\in\P$. Thus
\begin{flalign*}K_{-(\hat{P}_M+\hat{R})}\ast[C'_M]&=K_{-(\hat{P}_M+\hat{R})}\ast K_{\hat{R}}\ast [C_M]\\&=
K_{-\hat{P}_M}\ast [C_M]\\&=\X_M.
\end{flalign*}
In other word, $\X_M$ does not depend on the minimality of projective resolutions of $M$.

Given $M\in\A$ and $P\in\P$, we define
\begin{flalign*}\X_{M\oplus P[1]}:=\X_M\ast[Z_P]=K_{-\hat{P}_M}\ast[C_M]\ast[Z_P]=K_{-\hat{P}_M}\ast[C_M\oplus Z_P].
\end{flalign*}
In particular, \begin{equation}\label{XMP}\X_{P[1]}=[Z_P]~~\text{and}~~\X_{M\oplus P[1]}=\X_M\ast \X_{P[1]}.\end{equation}
\begin{proposition}\label{basis}
The algebra $\M\H(\A)$ has a basis consisting of elements
\begin{equation*}K_{\alpha}\ast \X_M\ast \X_{P[1]},\end{equation*}where $\alpha\in K(\A)$, $M\in\A$ and $P\in\P$.
\end{proposition}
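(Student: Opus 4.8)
The plan is to combine Proposition~\ref{fenjie} (which says every object of $C_2(\P)$ is uniquely of the form $K_P\oplus Z_Q\oplus C_M$) with the localization procedure and the factorization lemmas already established. First I would observe that, since $\H_{\tw}(C_2(\P))$ is by definition the free $\ZZ[q^{\pm\frac12}]$-module on $\Iso(C_2(\P))$, and since by Proposition~\ref{fenjie} the isomorphism classes in $C_2(\P)$ are in bijection with triples $([P],[Q],[M])$ with $P,Q\in\P$ and $M\in\A$, the elements $\{[K_P\oplus Z_Q\oplus C_M]\}$ form a basis of $\H_{\tw}(C_2(\P))$. Localizing at the (central, by~(\ref{KX})) elements $[K_P]$ to form $\M\H(\A)$ then produces a module with basis $\{[K_P]^{-1}\ast[K_{P'}\oplus Z_Q\oplus C_M]\}$, or equivalently, using~(\ref{KK})--(\ref{KX}) and Lemma~\ref{KM}, a basis $\{K_\alpha\ast[Z_Q\oplus C_M] : \alpha\in K(\A),\ Q\in\P,\ M\in\A\}$; here one must check that $K_\alpha$ is well defined independently of the chosen presentation $\alpha=\hat P-\hat Q$, which is immediate from~(\ref{KK}).

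Next I would rewrite this basis in the desired form. By Lemma~\ref{ybasis} we have $[K_P\oplus Z_Q\oplus C_M]=[K_P]\ast[C_M]\ast[Z_Q]$, hence $K_\alpha\ast[Z_Q\oplus C_M]=K_\alpha\ast[C_M]\ast[Z_Q]$. Using the definitions $\X_M=K_{-\hat P_M}\ast[C_M]$ and $\X_{P[1]}=[Z_P]$ together with centrality~(\ref{KX}) of the $K_\beta$, we get
\begin{equation*}
K_\alpha\ast[C_M]\ast[Z_Q]=K_{\alpha+\hat P_M}\ast\X_M\ast\X_{Q[1]},
\end{equation*}
so the substitution $\alpha\mapsto\alpha+\hat P_M$ is a bijection on $K(\A)$ for each fixed $M$, and therefore $\{K_\alpha\ast\X_M\ast\X_{Q[1]}\}$ is again a basis. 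This is exactly the claimed basis (with $Q$ renamed $P$), so the proposition follows.

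The only genuine content, and the step I would be most careful about, is the passage from $\H_{\tw}(C_2(\P))$ to its localization $\M\H(\A)$: one needs that localizing a free module at a set of elements that act invertibly and centrally (so that the Ore condition is trivially satisfied) yields a free module whose basis is obtained from the original basis by the evident normalization, i.e.\ that no new relations are introduced. Concretely, since each $[K_P]$ is central by~(\ref{KX}) and $[K_P]\ast[K_Q]=[K_{P\oplus Q}]$ by Lemma~\ref{KM}, the set $\{[K_P]:P\in\P\}$ generates a copy of the group algebra of the free abelian monoid on $\Iso(\P)$, whose localization is the group algebra of $K(\A)$ with $\ZZ[q^{\pm\frac12}]$-basis $\{K_\alpha:\alpha\in K(\A)\}$; tensoring the $\H_{\tw}(C_2(\P))$-basis statement with this localization and invoking Proposition~\ref{fenjie} to strip off the $K_P$-summands gives the result. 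I do not expect any other obstacle; the remaining manipulations are the routine bookkeeping with~(\ref{KK})--(\ref{XMP}) indicated above.
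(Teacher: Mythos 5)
Your proposal is correct and follows exactly the route the paper takes: the paper's own proof is a one-line appeal to Proposition~\ref{fenjie} (unique decomposition $K_P\oplus Z_Q\oplus C_M$) and Lemma~\ref{ybasis} (the factorization $[K_P\oplus Z_Q\oplus C_M]=[K_P]\ast[C_M]\ast[Z_Q]$), and your argument is simply a fully written-out version of this, including the routine verification that localizing at the central elements $[K_P]$ turns the free module on triples into the free module on $K(\A)\times\Iso(\A)\times\Iso(\P)$. The only caveat, shared with the paper itself, is that well-definedness and distinctness of the $K_\alpha$ rest on the classes of projectives embedding suitably into $K(\A)$, which is implicit in the paper's definition of $K_\alpha$ and not an additional gap in your argument.
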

\begin{proof}
It can be easily proved by Proposition \ref{fenjie} and Lemma \ref{ybasis}.
\end{proof}
\begin{lemma}\label{XP}
For any $P,Q\in\P$, we have that in $\M\H(\A)$
\begin{equation}\X_{P[1]}\ast \X_{Q[1]}=\X_{Q[1]}\ast \X_{P[1]}=\X_{(P\oplus Q)[1]}.\end{equation}
\end{lemma}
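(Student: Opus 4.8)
The plan is to reduce the claim to a Hall-product computation in $\H_{\tw}(C_2(\P))$, using the definition $\X_{P[1]}=[Z_P]$ from \eqref{XMP}. Thus it suffices to show $[Z_P]\ast[Z_Q]=[Z_P]\ast[Z_Q]=[Z_{P\oplus Q}]$ in $\H_{\tw}(C_2(\P))$ (commutativity being the nontrivial part), since $Z_{P\oplus Q}=Z_P\oplus Z_Q$ and hence $\X_{(P\oplus Q)[1]}=[Z_{P\oplus Q}]$ by the same formula.

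First I would record the relevant homological data. By Proposition \ref{kuozhang0} (or directly, since $Z_P$ and $Z_Q$ have zero $0$-th homology and $H_{-1}(Z_P)=P$, $H_{-1}(Z_Q)=Q$), we get $\Ext^1_{C_2(\P)}(Z_P,Z_Q)\cong\Hom_{\A}(H_{-1}(Z_P),H_0(Z_Q))\oplus\Ext^1_{\A}(H_0(Z_P),H_0(Z_Q))=0$, and symmetrically $\Ext^1_{C_2(\P)}(Z_Q,Z_P)=0$. So in each order the Hall product has a single term, the split extension $Z_P\oplus Z_Q$, and by Definition \ref{Hall algebra of abelian category} the untwisted product is $[Z_P]\diamond[Z_Q]=\frac{1}{|\Hom_{C_2(\P)}(Z_P,Z_Q)|}[Z_P\oplus Z_Q]$. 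Next I would compute the twisting exponents: by Lemma \ref{hom}\,\eqref{v_4}, $|\Hom_{C_2(\P)}(Z_P,Z_Q)|=|\Hom_{\A}(P,0)|=1$, so no denominator appears (and likewise with $P,Q$ swapped). For the twist, the Euler-form lemma gives $\lr{Z_P,Z_Q}=\dim_k\Hom_{\A}(0,0)+\dim_k\Hom_{\A}(P,Q)-\dim_k\Hom_{\A}(P,0)=\dim_k\Hom_{\A}(P,Q)$; but wait — since $Z_P$ has components $(P)_{-1}\to(0)_0$, we have $M_0=0$, so actually $\lr{Z_P,Z_Q}=0+\dim_k\Hom_{\A}(P,Q)-\dim_k\Hom_{\A}(P,0)=\dim_k\Hom_{\A}(P,Q)$. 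This is not symmetric in general, which looks like a problem.

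The resolution of this apparent obstacle — and the main point to get right — is that the twist in $\H_{\tw}(C_2(\P))$ must be computed with the correct Euler form, and one should double-check against the known fact (stated in the excerpt just before Proposition \ref{basis}, via \eqref{XMP} and the surrounding discussion, and consistent with the derived-Hall picture of Section 6 where $Z_P$ corresponds to $P[1]$) that the $\X_{P[1]}=[Z_P]$ commute. Concretely: in the Hom-space $\Hom_{C_2(\P)}(Z_P,Z_Q)$ one uses that a morphism $Z_P\to Z_Q$ is a commuting square with bottom-right corner $0$, so it is just a map $P\to Q$ in $\A$ together with the zero map $0\to 0$; hence $|\Hom_{C_2(\P)}(Z_P,Z_Q)|=|\Hom_{\A}(P,Q)|$, not $1$. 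Re-reading Lemma \ref{hom}: identity \eqref{v_4} reads $|\Hom_{C_2(\P)}(M_\bullet,Z_P)|=|\Hom_{\A}(M_{-1},P)|$, so with $M_\bullet=Z_Q$ (whose degree $-1$ term is $Q$) we get $|\Hom_{C_2(\P)}(Z_Q,Z_P)|=|\Hom_{\A}(Q,P)|$; symmetrically $|\Hom_{C_2(\P)}(Z_P,Z_Q)|=|\Hom_{\A}(P,Q)|$. Then the untwisted products are $[Z_P]\diamond[Z_Q]=\frac{1}{|\Hom_{\A}(P,Q)|}[Z_{P\oplus Q}]$ and $[Z_Q]\diamond[Z_P]=\frac{1}{|\Hom_{\A}(Q,P)|}[Z_{P\oplus Q}]$. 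For the Euler form, $\lr{Z_P,Z_Q}=\dim_k\Hom_{\A}((Z_P)_0,(Z_Q)_0)+\dim_k\Hom_{\A}((Z_P)_{-1},(Z_Q)_{-1})-\dim_k\Hom_{\A}((Z_P)_{-1},(Z_Q)_0)=0+\dim_k\Hom_{\A}(P,Q)-0=\dim_k\Hom_{\A}(P,Q)$. Therefore
\begin{equation*}
[Z_P]\ast[Z_Q]=q^{\dim_k\Hom_{\A}(P,Q)}\cdot\frac{1}{|\Hom_{\A}(P,Q)|}\,[Z_{P\oplus Q}]=[Z_{P\oplus Q}],
\end{equation*}
since $|\Hom_{\A}(P,Q)|=q^{\dim_k\Hom_{\A}(P,Q)}$, and identically $[Z_Q]\ast[Z_P]=[Z_{P\oplus Q}]$. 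This gives both equalities at once, so the main obstacle is purely bookkeeping: making sure the $\Hom$-cardinality in the Hall denominator exactly cancels the $q$-power from the twist (which it does, because the Euler form of $Z_P,Z_Q$ collapses to $\dim_k\Hom_{\A}(P,Q)$), and that $Z_{P\oplus Q}=Z_P\oplus Z_Q$ so the right-hand side is indeed $\X_{(P\oplus Q)[1]}=[Z_{P\oplus Q}]$. No deeper difficulty is expected.
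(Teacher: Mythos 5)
Your proposal is correct and, after the mid-argument self-correction of the Hom-space computation, it is essentially the paper's own proof: the paper likewise notes that $\Ext^1_{C_2(\P)}(Z_P,Z_Q)=0$, so the twisted product collapses to $q^{\lr{Z_P,Z_Q}}\frac{1}{|\Hom_{C_2(\P)}(Z_P,Z_Q)|}\X_{(P\oplus Q)[1]}=\X_{(P\oplus Q)[1]}$, the twist cancelling the Hom denominator exactly as in your final display. Your version merely makes explicit that $\Hom_{C_2(\P)}(Z_P,Z_Q)\cong\Hom_{\A}(P,Q)$ and $\lr{Z_P,Z_Q}=\dim_k\Hom_{\A}(P,Q)$, which the paper leaves implicit.
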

\begin{proof}
By definition,
\begin{flalign*}
\X_{P[1]}\ast \X_{Q[1]}&=q^{\lr{Z_P,Z_Q}}\frac{1}{|\Hom_{C_2(\P)}(Z_P,Z_Q)|}\X_{(P\oplus Q)[1]}\\
&=\X_{(P\oplus Q)[1]}.
\end{flalign*}
\end{proof}
\begin{theorem}\label{main1}
There exists an embedding of algebras
$$\xymatrix{\Psi:\H_{\tw}(\A)\ar@{^{(}->}[r]&\M\H(\A),} \xymatrix{[M]\ar@{|->}[r]&\X_M.}$$
\end{theorem}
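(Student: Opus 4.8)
The plan is to establish $\Psi$ as a homomorphism of algebras first, and then verify injectivity. Since $\H_{\tw}(\A)$ is generated as a $\ZZ[q^{\pm\frac12}]$-module by the classes $[M]$ with $M\in\A$, and since every object of $\A$ decomposes into indecomposables, it suffices to check that $\Psi$ respects the twisted product on a spanning set and then extends linearly; the real content is the multiplicativity identity
\begin{equation*}
\Psi([M]*[N])=\Psi([M])*\Psi([N]),\qquad\text{i.e.}\qquad \X_M*\X_N=\sum_{[L]}q^{\lr{M,N}}\frac{|\Ext^1_\A(M,N)_L|}{|\Hom_\A(M,N)|}\,\X_L
\end{equation*}
for all $M,N\in\A$. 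First I would unwind the left-hand side using the definition $\X_M=K_{-\hat P_M}*[C_M]$ and relations (\ref{KK})--(\ref{KX}) to move the $K$-factors past the $[C_M]$'s, reducing the claim to computing $[C_M]*[C_N]$ in $\H_{\tw}(C_2(\P))$ up to a known power of $q$ and a known $K_\alpha$. By Definition \ref{Hall algebra of abelian category}, $[C_M]\diamond[C_N]=\sum_{[E_\bullet]}\frac{|\Ext^1_{C_2(\P)}(C_M,C_N)_{E_\bullet}|}{|\Hom_{C_2(\P)}(C_M,C_N)|}[E_\bullet]$, so I need to understand which $E_\bullet\in C_2(\P)$ arise as extensions of $C_N$ by $C_M$, and with what multiplicity.

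The key structural input is Lemma \ref{ext}, which gives $\Ext^1_{C_2(\P)}(C_M,C_N)\cong\Ext^1_\A(M,N)$, together with Proposition \ref{fenjie}: any $E_\bullet$ is of the form $K_P\oplus Z_Q\oplus C_L$. Since $C_M,C_N$ have zero $(-1)$-homology in the kernel sense — more precisely $C_M$, $C_N$ are the "$C$-type" summands with $H_{-1}=0$ and $H_0=M,N$ — an extension $0\to C_N\to E_\bullet\to C_M\to0$ forces $H_{-1}(E_\bullet)=0$ and gives a short exact sequence $0\to N\to H_0(E_\bullet)\to M\to0$ in $\A$ on homology; hence $E_\bullet=K_P\oplus C_L$ with $P$ determined by comparing ranks and $L$ the middle term of the corresponding sequence in $\Ext^1_\A(M,N)_L$. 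I would make this precise by tracking the isomorphism of Lemma \ref{ext} at the level of the connecting maps (it comes from $K^b(\P)\simeq D^b(\A)$, so a genuine triangle/ses correspondence is available), concluding that the extension $e\in\Ext^1_{C_2(\P)}(C_M,C_N)$ corresponding to $\bar e\in\Ext^1_\A(M,N)_L$ has middle term $K_P\oplus C_L$, with $\hat P$ forced by $\hat P+\hat P_L=\hat P_M+\hat P_N$ plus the splitting contributions. Feeding this back, $[C_M]\diamond[C_N]=\sum_{[L]}\frac{|\Ext^1_\A(M,N)_L|}{|\Hom_{C_2(\P)}(C_M,C_N)|}[K_{P}\oplus C_L]$; then using Lemma \ref{KM} to pull off $[K_P]=K_{\hat P}$ and (\ref{v_5}) for the Hom-denominator, the surplus factors $|\Hom_\A(P_M,\Omega_N)|$ and the discrepancy between $\lr{C_M,C_N}$ and $\lr{M,N}$ must cancel exactly against each other — this is where the previous two lemmas on Hom- and Euler-form computations pay off, and checking this cancellation (a bookkeeping identity among dimensions of Hom/Ext spaces, valid because $\A$ is hereditary) is what I expect to be the main technical obstacle.

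For injectivity I would use the basis of Proposition \ref{basis}: the elements $K_\alpha*\X_M*\X_{P[1]}$ are linearly independent, so in particular the $\X_M=K_0*\X_M*\X_{0}$ for distinct $[M]\in\Iso(\A)$ are linearly independent in $\M\H(\A)$. Since $\Psi$ sends the $\ZZ[q^{\pm\frac12}]$-basis $\{[M]\}$ of $\H_{\tw}(\A)$ to the linearly independent set $\{\X_M\}$, it is injective. Together with the multiplicativity established above, this shows $\Psi$ is an algebra embedding, completing the proof. Finally I would note that unitality is automatic since $\X_0=K_{-\hat 0}*[C_0]=[0]$ is the identity of $\M\H(\A)$, matching the identity $[0]$ of $\H_{\tw}(\A)$.
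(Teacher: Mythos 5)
Your proposal is correct and is essentially the paper's own argument: reduce to computing $[C_M]\ast[C_N]$, identify the middle terms of extensions of $C_M$ by $C_N$ as $K_P\oplus C_L$ with the fibers of the isomorphism of Lemma \ref{ext} matching $|\Ext^1_{\A}(M,N)_L|$ (the paper invokes the Horse-Shoe Lemma at exactly this point), and deduce injectivity from the basis of Proposition \ref{basis}. The cancellation you flag as the ``main technical obstacle'' is in fact immediate --- by (\ref{v_5}) and Lemma \ref{ext} one has $\lr{C_M,C_N}=\lr{M,N}+\dim_k\Hom_{\A}(P_M,\Omega_N)$, and the factor $q^{\dim_k\Hom_{\A}(P_M,\Omega_N)}=|\Hom_{\A}(P_M,\Omega_N)|$ cancels the surplus Hom factor from (\ref{v_5}) --- a bookkeeping step the paper sidesteps by rewriting $q^{\lr{C_M,C_N}}$ as $|\Hom_{C_2(\P)}(C_M,C_N)|/|\Ext^1_{C_2(\P)}(C_M,C_N)|$.
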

\begin{proof}
By Proposition \ref{basis}, clearly, $\Psi$ is injective. We only need to prove that $\Psi$ is a homomorphism of algebras.

By definition,
\begin{flalign*}
\X_M\ast \X_N&=K_{-\hat{P}_M}\ast[C_M]\ast K_{-\hat{P}_N}\ast[C_N]\\
&=K_{-(\hat{P}_M+\hat{P}_N)}\ast[C_M]\ast[C_N].
\end{flalign*}
By Lemma \ref{ext}, $$\Ext^1_{C_2(\P)}(C_M,C_N)\cong\Ext^1_{\A}(M,N).$$ Moreover, by the Horse-Shoe Lemma (cf. \cite{Weibel}), any extension of $C_M$ by $C_N$ is the object $C'_L$ defined by the corresponding extension $L$ of $M$ by $N$. Hence,
$$|\Ext^1_{C_2(\P)}(C_M,C_N)_{C'_L}|=|\Ext^1_{\A}(M,N)_L|.$$

Thus,
\begin{flalign*}
\X_M\ast \X_N&=q^{\lr{C_M,C_N}}\sum\limits_{[L]}\frac{|\Ext^1_{C_2(\P)}(C_M,C_N)_{C'_L}|}{|\Hom_{C_2(\P)}(C_M,C_N)|} K_{-(\hat{P}_M+\hat{P}_N)}\ast[C'_L]\\
&=\frac{|\Hom_{C_2(\P)}(C_M,C_N)|}{|\Ext^1_{C_2(\P)}(C_M,C_N)|}\sum\limits_{[L]}\frac{|\Ext^1_{\A}(M,N)_L|}{|\Hom_{C_2(\P)}(C_M,C_N)|} \X_L\\
&=\sum\limits_{[L]}\frac{|\Ext^1_{\A}(M,N)_L|}{|\Ext^1_{C_2(\P)}(C_M,C_N)|} \X_L\\
&=\sum\limits_{[L]}\frac{|\Ext^1_{\A}(M,N)_L|}{|\Ext^1_{\A}(M,N)|} \X_L\\
&=\frac{|\Hom_{\A}(M,N)|}{|\Ext^1_{\A}(M,N)|}\sum\limits_{[L]}\frac{|\Ext^1_{\A}(M,N)_L|}{|\Hom_{\A}(M,N)|} \X_L\\
&=q^{\lr{M,N}}\sum\limits_{[L]}\frac{|\Ext^1_{\A}(M,N)_L|}{|\Hom_{\A}(M,N|} \X_L.
\end{flalign*}
That is, $\Psi([M])\ast\Psi([N])=\Psi([M]\ast[N])$, and we complete the proof.
\end{proof}

\section{A multiplication formula in $\M\H(\A)$}
Given $M\in\A$ and $P\in\P$, consider an extension of $C_M$ by $Z_P$
\begin{equation}\label{eta}\eta: 0\longrightarrow C_M\longrightarrow X_{\bullet}\longrightarrow Z_P\longrightarrow0.\end{equation}
It induces a long exact sequence in homology
$$H_{-1}(C_M)\longrightarrow H_{-1}(X_\bullet)\longrightarrow H_{-1}(Z_P)\longrightarrow H_0(C_M)\longrightarrow H_0(X_{\bullet})\longrightarrow H_0(Z_P).$$
Since $H_{-1}(C_M)=H_0(Z_P)=0$, $H_{-1}(Z_P)\cong P$ and $H_0(C_M)\cong M$, we obtain the exact sequence
$$0\longrightarrow H_{-1}(X_\bullet)\longrightarrow P\longrightarrow M\longrightarrow H_0(X_{\bullet})\longrightarrow0.$$
Writing $X_\bullet=K_{T}\oplus Z_{Q}\oplus C_B$ for some $T,Q\in\P$ and $B\in\A$, we get the following exact sequence
$$\xymatrix{0\ar[r]& Q\ar[r]& P\ar[r]^-{\delta}& M\ar[r]& B\ar[r]& 0,}$$
where $\delta$ is determined by the equivalence class of $\eta$ via the isomorphism
\begin{equation}\label{tg}
\Ext^1_{C_2(\P)}(Z_P,C_M)\cong\Hom_{\A}(P,M).\end{equation}
By the short exact sequence (\ref{eta}), we obtain that
\begin{equation}P_B\oplus T\cong P_M~~\text{and}~~\Omega_B\oplus Q\oplus T\cong P\oplus\Omega_M.\end{equation}
It is easy to see that the isomorphism (\ref{tg}) induces that
\begin{equation}\label{kuozhang}\Ext^1_{C_2(\P)}(Z_P,C_M)_{K_T\oplus Z_Q\oplus C_B}\cong {}_Q\Hom_{\A}(P,M)_B,\end{equation}
where ${}_Q\Hom_{\A}(P,M)_B:=\{f:P\to M~|~\Ker(f)\cong Q~~\text{and}~~\Coker(f)\cong B\}$. By \cite{Vanden,ZHC},
\begin{equation}\label{xjs}|{}_Q\Hom_{\A}(P,M)_B|=\sum\limits_{[L]}a_LF^P_{LQ}F^M_{BL}.\end{equation}

\begin{theorem}\label{main2}
Given $M\in\A$ and $P\in\P$, we have that in $\M\H(\A)$
\begin{flalign*}\X_{P[1]}\ast \X_M&=q^{-\lr{P,M}}\sum\limits_{[B],[Q]}|{}_Q\Hom_{\A}(P,M)_B| \X_B\ast \X_{Q[1]}\\
&=q^{-\lr{P,M}}\sum\limits_{[B],[Q]}|{}_Q\Hom_{\A}(P,M)_B| \X_{B\oplus Q[1]}.\end{flalign*}
\end{theorem}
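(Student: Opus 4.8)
The plan is to reduce the identity to a single Hall product inside $\H_{\tw}(C_2(\P))$, namely $[Z_P]\ast[C_M]$, and then to read off the possible middle terms from the analysis of the extensions (\ref{eta}) carried out above in this section. First I would use $\X_{P[1]}=[Z_P]$ and $\X_M=K_{-\hat P_M}\ast[C_M]$ together with the commutation relations (\ref{KK}) and (\ref{KX}) to pull the $K$-factor out front:
$$\X_{P[1]}\ast\X_M=[Z_P]\ast K_{-\hat P_M}\ast[C_M]=K_{-\hat P_M}\ast\bigl([Z_P]\ast[C_M]\bigr),$$
so that the whole computation is concentrated in $[Z_P]\ast[C_M]$.

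The second step is to determine the two scalars that enter this product. From the formula for $\lr{M_\bullet,N_\bullet}$ proved above one computes $\lr{Z_P,C_M}=\dim_k\Hom_\A(P,\Omega_M)-\dim_k\Hom_\A(P,P_M)$, and applying the exact functor $\Hom_\A(P,-)$ to the minimal projective resolution (\ref{mpr}) of $M$ (using that $\A$ is hereditary, so $\Ext^1_\A(P,M)=0$) yields $\lr{Z_P,C_M}=-\dim_k\Hom_\A(P,M)=-\lr{P,M}$. Combined with $\Ext^1_{C_2(\P)}(Z_P,C_M)\cong\Hom_\A(P,M)$ from Lemma \ref{ext}, this also forces $\Hom_{C_2(\P)}(Z_P,C_M)=0$ (alternatively, this is immediate because $\delta_M$ is a monomorphism). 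Hence, by Definition \ref{Hall algebra of abelian category} and the definition of the twisted multiplication,
$$[Z_P]\ast[C_M]=q^{-\lr{P,M}}\sum_{[X_\bullet]}\bigl|\Ext^1_{C_2(\P)}(Z_P,C_M)_{X_\bullet}\bigr|\,[X_\bullet].$$

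The third step is to feed in the structural results of this section. By Proposition \ref{fenjie} each $X_\bullet$ occurring here decomposes uniquely as $K_T\oplus Z_Q\oplus C_B$, and the long exact homology sequence attached to (\ref{eta}), together with the isomorphisms $P_B\oplus T\cong P_M$ and $\Omega_B\oplus Q\oplus T\cong P\oplus\Omega_M$ displayed above, shows that $T$ is determined up to isomorphism by $B$; thus the sum over $[X_\bullet]$ becomes a sum over $[B]$ and $[Q]$, with coefficient $|{}_Q\Hom_\A(P,M)_B|$ by (\ref{kuozhang}). Finally, for such an $X_\bullet$, Lemma \ref{ybasis} gives $[X_\bullet]=[K_T]\ast[C_B]\ast[Z_Q]$, and since $[K_T]=K_{\hat T}$ and $\hat P_M=\hat P_B+\hat T$, left multiplication by $K_{-\hat P_M}$ collapses this to $K_{-\hat P_B}\ast[C_B]\ast[Z_Q]=\X_B\ast\X_{Q[1]}=\X_{B\oplus Q[1]}$ by (\ref{XMP}). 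Putting everything together gives both displayed formulas, the second one being simply $\X_B\ast\X_{Q[1]}=\X_{B\oplus Q[1]}$; if one wishes, $|{}_Q\Hom_\A(P,M)_B|$ may then be rewritten through (\ref{xjs}).

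The step I expect to require the most care is the reindexing in the third paragraph: one must verify that passing from $\sum_{[X_\bullet]}$ to $\sum_{[B],[Q]}$ is a genuine bijection on the support, i.e. that every pair $(B,Q)$ realized as $(\Coker f,\Ker f)$ for some $f\colon P\to M$ corresponds to exactly one isomorphism class $X_\bullet$ (with $T$ forced by $P_B\oplus T\cong P_M$), and conversely that every $X_\bullet$ contributing to $[Z_P]\ast[C_M]$ arises this way with the correct multiplicity. This is exactly what (\ref{kuozhang}) encodes, so once that isomorphism is invoked the remaining work is routine substitution.
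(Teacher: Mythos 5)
Your proposal is correct and follows essentially the same route as the paper's own proof: pull the factor $K_{-\hat{P}_M}$ out front, expand $[Z_P]\ast[C_M]$ using $\Hom_{C_2(\P)}(Z_P,C_M)=0$, $\lr{Z_P,C_M}=-\lr{P,M}$ and the identification (\ref{kuozhang}), then collapse each middle term $K_T\oplus Z_Q\oplus C_B$ via Lemma \ref{ybasis} and $P_B\oplus T\cong P_M$ to get $\X_B\ast\X_{Q[1]}=\X_{B\oplus Q[1]}$. The only cosmetic difference is that you verify the scalar and the vanishing of $\Hom_{C_2(\P)}(Z_P,C_M)$ by direct computation, which the paper simply asserts.
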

\begin{proof}
By definition,
\begin{flalign*}
\X_{P[1]}\ast \X_M&=[Z_P]\ast K_{-\hat{P}_M}\ast[C_M]\\
&=K_{-\hat{P}_M}\ast[Z_P]\ast[C_M]\\
&=q^{\lr{Z_P,C_M}}\sum\limits_{[B],[T],[Q]}\frac{|\Ext^1_{C_2(\P)}(Z_P,C_M)_{K_T\oplus Z_Q\oplus C_B}|}{|\Hom_{C_2(\P)}(Z_P,C_M)|} K_{-\hat{P}_M}\ast[K_T\oplus Z_Q\oplus C_B].
\end{flalign*}
Since $\Hom_{C_2(\P)}(Z_P,C_M)=0$ and then $\lr{Z_P,C_M}=-\lr{P,M}$, by (\ref{kuozhang}), we obtain that
\begin{flalign*}
\X_{P[1]}\ast \X_M
&=q^{-\lr{P,M}}\sum\limits_{[B],[T],[Q]}|{}_Q\Hom_{\A}(P,M)_B| K_{-\hat{P}_M}\ast[K_T\oplus Z_Q\oplus C_B]\\
&=q^{-\lr{P,M}}\sum\limits_{[B],[T],[Q]}|{}_Q\Hom_{\A}(P,M)_B| K_{-\hat{P}_M}\ast K_{\hat{T}}\ast[C_B]\ast[Z_Q]\\
&=q^{-\lr{P,M}}\sum\limits_{[B],[Q]}|{}_Q\Hom_{\A}(P,M)_B| K_{-\hat{P}_B}\ast[C_B]\ast[Z_Q]\\
&=q^{-\lr{P,M}}\sum\limits_{[B],[Q]}|{}_Q\Hom_{\A}(P,M)_B| \X_B\ast \X_{Q[1]}\\
&=q^{-\lr{P,M}}\sum\limits_{[B],[Q]}|{}_Q\Hom_{\A}(P,M)_B| \X_{B\oplus Q[1]}.
\end{flalign*}
\end{proof}

\begin{corollary}
Given $M\in\A$ and $P\in\P$, if $\Hom_{\A}(P,M)=0$, then in $\M\H(\A)$
$$\X_{P[1]}\ast \X_M=\X_M\ast \X_{P[1]}.$$
\end{corollary}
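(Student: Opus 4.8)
The plan is to read the corollary off directly from Theorem~\ref{main2} by specializing the multiplication formula to the degenerate situation created by the hypothesis. First I would use $\Hom_{\A}(P,M)=0$ to identify which terms of the sum survive: the only morphism $f\colon P\to M$ is the zero map, and $\Ker(f)=P$, $\Coker(f)=M$. Consequently, in the notation of Theorem~\ref{main2}, the set ${}_Q\Hom_{\A}(P,M)_B=\{f\colon P\to M\mid \Ker(f)\cong Q,\ \Coker(f)\cong B\}$ is empty unless $Q\cong P$ and $B\cong M$, in which case it is a singleton, so $|{}_Q\Hom_{\A}(P,M)_B|$ equals $1$ for $(B,Q)=(M,P)$ and $0$ otherwise.

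Next I would check that the scalar prefactor disappears. Since $P$ is projective we have $\Ext^1_{\A}(P,M)=0$, and $\Hom_{\A}(P,M)=0$ by hypothesis, so $\lr{P,M}=0$ and hence $q^{-\lr{P,M}}=1$. Feeding these two observations into the formula of Theorem~\ref{main2} collapses the double sum to the single surviving term, giving $\X_{P[1]}\ast\X_M=\X_{M\oplus P[1]}$. Finally, invoking the identity $\X_{M\oplus P[1]}=\X_M\ast\X_{P[1]}$ from~(\ref{XMP}) yields $\X_{P[1]}\ast\X_M=\X_M\ast\X_{P[1]}$, which is the assertion.

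I do not anticipate a genuine obstacle: all the substance is already contained in Theorem~\ref{main2}, and this corollary is simply the case in which the Hall-type counting coefficients $|{}_Q\Hom_{\A}(P,M)_B|$ degenerate to a single $1$. The only point requiring a moment's care is the bookkeeping of kernels and cokernels — recognizing that the zero morphism contributes $Q\cong P$ and $B\cong M$ (rather than, say, some proper summand) — together with the elementary verification that $\lr{P,M}=0$ so that the power of $q$ is trivial.
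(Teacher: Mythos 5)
Your proof is correct and is exactly the argument the paper intends: the corollary is stated as an immediate consequence of Theorem~\ref{main2}, where the hypothesis $\Hom_{\A}(P,M)=0$ forces the sum to collapse to the single term $(B,Q)=(M,P)$ with coefficient $1$ and $q^{-\lr{P,M}}=1$, and then $\X_{M\oplus P[1]}=\X_M\ast\X_{P[1]}$ from~(\ref{XMP}) finishes it. No gaps.
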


\section{Generators in $\M\H(\A)$}
Let $\Ind(\A)$ be a complete set of indecomposable objects in $\A$,
and let $\Ind(\P)\subset\Ind(\A)$ be a subset consisting of indecomposable projective objects. Define $K_{\geq0}(\A)$ to be the positive cone in $K(\A)$, which consists of classes of objects in $\A$, rather than formal differences of such. We assume that for each nonzero object $M\in\A$, $\hat{M}\neq0$. This condition is equivalent to the statement that the rule
$$\alpha\leq\beta~~\Leftrightarrow~~\beta-\alpha\in K_{\geq0}(\A)$$ defines a partial order on $K(\A)$. It holds, for example, if $\A$ has finite length.

\begin{proposition}\label{generators}
The set $\{K_{{P}}^{\pm},\X_M,\X_{P[1]}~|~M\in\Ind(\A)~~\text{and}~~P\in\Ind(\P)\}$ is a generating set of $\M\H(\A)$.
\end{proposition}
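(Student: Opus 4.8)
The plan is to show that every basis element $K_\alpha \ast \X_M \ast \X_{P[1]}$ from Proposition~\ref{basis} lies in the subalgebra $\mathcal S$ generated by $\{K_P^{\pm}, \X_M, \X_{P[1]} : M \in \Ind(\A),\, P \in \Ind(\P)\}$. Since the $K_\alpha$ for $\alpha \in K(\A)$ are by definition products of $[K_P]^{\pm} = K_{\hat P}^{\pm}$ with $P$ projective, and each such $P$ decomposes into indecomposable projectives, the factors $K_\alpha$ are already in $\mathcal S$ by \eqref{KK}. Likewise, by \eqref{XMP} and Lemma~\ref{XP}, $\X_{P[1]} = \X_{(P_1\oplus\cdots\oplus P_r)[1]} = \X_{P_1[1]}\ast\cdots\ast\X_{P_r[1]}$ whenever $P = P_1\oplus\cdots\oplus P_r$ with each $P_i$ indecomposable projective, so these factors are in $\mathcal S$ too. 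Hence everything reduces to proving that $\X_M \in \mathcal S$ for an arbitrary object $M \in \A$, not just indecomposable ones.

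To handle $\X_M$ for general $M$, I would argue by induction on the object $M$ using the partial order $\leq$ on $K(\A)$ coming from $K_{\geq 0}(\A)$, which is available precisely because of the standing assumption that $\hat N \neq 0$ for all nonzero $N$. Write $M = M_1 \oplus M_2$ with both summands nonzero (if $M$ is indecomposable there is nothing to prove, and $\X_0 = 1$). Apply Theorem~\ref{main1}: since $\Psi$ is an algebra homomorphism with $\Psi([N]) = \X_N$, and in the twisted Hall algebra $\H_{\tw}(\A)$ the product $[M_1]\ast[M_2]$ expands as $q^{\lr{M_1,M_2}}\sum_{[L]} \tfrac{|\Ext^1_{\A}(M_1,M_2)_L|}{|\Hom_{\A}(M_1,M_2)|}[L]$, applying $\Psi$ gives
\[
\X_{M_1}\ast\X_{M_2} = q^{\lr{M_1,M_2}}\sum_{[L]} \frac{|\Ext^1_{\A}(M_1,M_2)_L|}{|\Hom_{\A}(M_1,M_2)|}\,\X_L .
\]
The term $L = M_1\oplus M_2 = M$ occurs on the right-hand side with a nonzero (invertible in $\ZZ[q^{\pm 1/2}]$) coefficient, so $\X_M$ can be solved for in terms of $\X_{M_1}\ast\X_{M_2}$ and the $\X_L$ with $L \not\cong M$; every such $L$ is an extension of $M_2$ by $M_1$, hence $\hat L = \hat{M_1}+\hat{M_2} = \hat M$, so $L$ and $M$ sit at the same level of the partial order and this alone does not drive an induction.

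The real point — and the step I expect to be the main obstacle — is therefore to set up the induction on the correct quantity. The standard device (as in Ringel's and Bridgeland's treatments of such filtered-algebra arguments) is to induct not on $\hat M$ but on a refinement: for fixed $\hat M = \alpha$, order isomorphism classes of objects with that dimension vector by, say, the dimension of $\End_{\A}(M)$ or equivalently by degeneration order, noting that among objects of a fixed dimension vector the decomposable ones are ``more degenerate'' than, and $M_1\oplus M_2$ is the unique minimal/maximal such $L$ appearing with the extreme value of $\dim\Hom(M_1,M_2)$ in the expansion above. Concretely, one shows that every $L \neq M_1 \oplus M_2$ appearing on the right has $\dim\End(L) < \dim\End(M_1\oplus M_2)$, so by downward induction on $\dim\End$ at fixed dimension vector (base case: the generic, i.e.\ $\End$-minimal, module of each dimension vector, which one shows directly is a product of $\X$'s of indecomposables — or more simply, induct with indecomposables as the base case since they are given), all those $\X_L$ lie in $\mathcal S$, and then so does $\X_M$. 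I would spell out the degeneration inequality by the usual argument that a nonsplit situation strictly decreases $\dim\End$, combined with the fact that the full subcategory of objects with a bounded $\dim\End$ and fixed dimension vector is finite, guaranteeing the induction terminates. Assembling these pieces — $K_\alpha$ and $\X_{P[1]}$ factors handled by \eqref{KK}, \eqref{XMP}, Lemma~\ref{XP}; $\X_M$ handled by the $\dim\End$-induction via Theorem~\ref{main1} — yields that $\mathcal S = \M\H(\A)$, which is the assertion.
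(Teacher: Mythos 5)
Your argument is correct and is essentially the paper's own proof: the paper establishes generation of $\H_{\tw}(C_2(\P))$ by the elements $[K_P],[C_M],[Z_P]$ using exactly the same two-step ordering you describe — first the class $\Dim H_{-1}(X_\bullet)+\Dim H_0(X_\bullet)$ in $K_{\geq0}(\A)$, then $\dim_k\End_{\A}(H_0(X_\bullet))$, which drops strictly for nonsplit extensions, so the split middle term is the unique leading term and its coefficient is an invertible power of $q$. The only difference is organizational: you reduce to objects of $\A$ via the triangular basis of Proposition \ref{basis} and the embedding of Theorem \ref{main1}, while the paper runs the same induction directly on arbitrary objects of $C_2(\P)$ before localizing.
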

\begin{proof}
For any short exact sequence in $C_2(\P)$
$$0\longrightarrow N_\bullet\longrightarrow L_\bullet \longrightarrow M_\bullet\longrightarrow0,$$
it induces a long exact sequence in homology
{\footnotesize\begin{equation}\label{f}\xymatrix{0\to H_{-1}(N_\bullet)\ar[r]& H_{-1}(L_\bullet)\ar[r]& H_{-1}(M_\bullet)\ar[r]^-f& H_{0}(N_\bullet)\ar[r]& H_{0}(L_\bullet)\ar[r]& H_{0}(M_\bullet)\to0.}\end{equation}}
Setting $Y:=\im(f)$, we obtain two exact sequences
$$\xymatrix{0\to H_{-1}(N_\bullet)\ar[r]& H_{-1}(L_\bullet)\ar[r]& H_{-1}(M_\bullet)\ar[r]^-f&Y\to0}$$
and
$$\xymatrix{0\to Y\ar[r]&H_{0}(N_\bullet)\ar[r]& H_{0}(L_\bullet)\ar[r]& H_{0}(M_\bullet)\to0.}$$
Thus, \begin{equation}\begin{split}\label{filter}&\Dim H_{-1}(L_\bullet)+\Dim H_{0}(L_\bullet)=\\
&\Dim H_{-1}(M_\bullet)+\Dim H_{0}(M_\bullet)+\Dim H_{-1}(N_\bullet)+\Dim H_{0}(N_\bullet)-2\Dim Y.\end{split}\end{equation}

For any object $X_\bullet\in C_2(\P)$, define $\Dim X_\bullet:=\Dim H_{-1}(X_\bullet)+\Dim H_{0}(X_\bullet)$. Then (\ref{filter}) can be rewritten as \begin{equation}\label{filter2}\Dim L_\bullet=\Dim M_\bullet+\Dim N_\bullet-2\Dim Y.\end{equation} For each $\alpha\in K_{\geq0}(\A)$, let $\H_{tw}^{\leq\alpha}(C_2(\P))$ be the submodule of $\H_{tw}(C_2(\P))$ spanned by all $[X_\bullet]$ with $\Dim X_\bullet\leq\alpha$. By (\ref{filter2}),
$$\H_{tw}^{\leq\alpha}(C_2(\P))\ast \H_{tw}^{\leq\beta}(C_2(\P))\subseteq\H_{tw}^{\leq\alpha+\beta}(C_2(\P)).$$
That is, $\H_{tw}(C_2(\P))$ is a $K_{\geq0}(\A)$-filtered algebra.

By Proposition \ref{kuozhang0}, $$\Ext^1_{C_2(\P)}(M_\bullet, N_\bullet)\cong\Hom_{\A}(H_{-1}(M_\bullet),H_{0}(N_\bullet))
\oplus\Ext^1_{\A}(H_{0}(M_\bullet),H_{0}(N_\bullet)).$$
If $f$ in (\ref{f}) is nonzero, then $\Dim L_\bullet<\Dim M_\bullet+\Dim N_\bullet$;
otherwise, $\Dim L_\bullet=\Dim M_\bullet+\Dim N_\bullet$ and we have the short exact sequence $$\xi: 0\longrightarrow H_0(N_\bullet)\longrightarrow H_0(L_\bullet)\longrightarrow H_0(M_\bullet)\longrightarrow0.$$ Furthermore, $$\dim_k\End_{\A}(H_0(L_\bullet))\leq\dim_k\End_{\A}(H_0(M_\bullet)\oplus H_0(N_\bullet)),$$ and the equality holds if and only if $\xi$ is splitting.

Let us give an order on $K_{\geq0}(\A)\times\mathbb{N}$:
$$(\alpha_1,d_1)\leq(\alpha_2,d_2)\Longleftrightarrow``\alpha_1<\alpha_2"~ \text{or}~``\alpha_1=\alpha_2,d_1\leq d_2".$$

For any $X_\bullet\in C_2(\mathscr{P})$, set $\deg X_\bullet:=(\Dim X_\bullet,\dim_k\End_{\A}(H_0(X_\bullet)))$.
Then for any $M_\bullet,N_\bullet\in C_2(\mathscr{P})$,
$$[M_\bullet]\ast[N_\bullet]=a_{M_\bullet\oplus N_\bullet}[M_\bullet\oplus N_\bullet]+\sum\limits_{[L_\bullet]:\deg L_\bullet<\deg M_\bullet\oplus N_\bullet}a_{L_\bullet}[L_\bullet],$$
where $a_{M_\bullet\oplus N_\bullet}$ and all $a_{L_\bullet}$ belong to $\ZZ[{q}^{\pm\frac{1}{2}}]$.

By induction on the number of indecomposable direct summands, we prove that the elements in $\{[K_P],[C_M],[Z_P]~|~M\in\Ind(\A)~~\text{and}~~P\in\Ind(\P)\}$ generate the Hall algebra $\H_{\tw}(C_2(\P))$, and then we easily complete the proof.
\end{proof}

For use below we write down the defining relations of $\mathcal{MH}(\A)$ in the following
\begin{theorem}\label{ydygx}
The algebra $\mathcal{MH}(\A)$ is generated by all $K_{\alpha}$ and $\mathbb{X}_{M\oplus P[1]}$ $($with $\alpha\in K(\A)$, $M\in\A$ and $P\in\P$$)$,
which are subject to the following relations
\begin{equation}\label{h1}
\begin{split}K_{\alpha}\ast K_{\beta}=K_{\alpha+\beta}=K_{\beta}\ast K_{\alpha};\end{split}\end{equation}
\begin{equation}\label{h2}
\begin{split}K_{\alpha}\ast\mathbb{X}_{M\oplus P[1]}
    =\mathbb{X}_{M\oplus P[1]}\ast K_{\alpha};\end{split}\end{equation}
\begin{equation}\label{h3}
\begin{split}
\mathbb{X}_{P[1]}\ast \mathbb{X}_{Q[1]}=
\mathbb{X}_{(P\oplus Q)[1]}
=\mathbb{X}_{Q[1]}\ast\mathbb{X}_{P[1]};
\end{split}\end{equation}
\begin{equation}\label{h4}\mathbb{X}_{M}\ast\mathbb{X}_{N}=q^{\lr{M,N}}\sum_{[L]}\frac{|\mathrm{Ext}_{\A}^{1}(M,N)_{L}|}{|\mathrm{Hom}_{\A}(M,N)|}\mathbb{X}_L;\end{equation}
\begin{equation}\label{h5}\mathbb{X}_{M}\ast\mathbb{X}_{P[1]}=\mathbb{X}_{M\oplus
P[1]};\end{equation}
\begin{equation}\label{h6}\mathbb{X}_{P[1]}\ast\mathbb{X}_{M}=q^{-\lr{P,M}}
\sum\limits_{[B],[Q]}|{}_Q\Hom_{\A}(P,M)_B|\mathbb{X}_{B\oplus Q[1]};\end{equation}
for any $\alpha,\beta\in K(\A)$, $M,N\in\A$ and $P,Q\in\P$.
\end{theorem}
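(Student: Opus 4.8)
The plan is to establish Theorem~\ref{ydygx} by showing two things: first, that the listed relations all hold in $\M\H(\A)$, and second, that they are a \emph{complete} set of relations, i.e.\ any relation among the stated generators is a consequence of \eqref{h1}--\eqref{h6}. The first part is essentially bookkeeping using results already proved: \eqref{h1} is \eqref{KK}, \eqref{h2} is \eqref{KX}, \eqref{h3} is Lemma~\ref{XP}, \eqref{h4} is the statement that $\Psi$ in Theorem~\ref{main1} is an algebra homomorphism together with the definition of the twisted product in $\H_{\tw}(\A)$, \eqref{h5} is the definition of $\X_{M\oplus P[1]}$ in \eqref{XMP}, and \eqref{h6} is Theorem~\ref{main2}. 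So the only real content is completeness.

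For completeness, I would argue as follows. By Proposition~\ref{basis}, $\M\H(\A)$ has a $\ZZ[q^{\pm 1/2}]$-basis $\{K_\alpha\ast\X_M\ast\X_{P[1]}\}$, and by Proposition~\ref{generators} the elements $K_P^{\pm}$, $\X_M$ ($M\in\Ind(\A)$), $\X_{P[1]}$ ($P\in\Ind(\P)$) generate. Let $\mathcal{F}$ be the free (associative, unital) $\ZZ[q^{\pm 1/2}]$-algebra on symbols $\tilde K_\alpha$ ($\alpha\in K(\A)$), $\tilde\X_{M\oplus P[1]}$ ($M\in\A$, $P\in\P$), modulo the relations \eqref{h1}--\eqref{h6}, and let $\pi\colon\mathcal{F}\to\M\H(\A)$ be the obvious surjection. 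Using \eqref{h1} every monomial in the $\tilde K$'s collapses to a single $\tilde K_\alpha$, and using \eqref{h5} we may replace every $\tilde\X_{M\oplus P[1]}$ by $\tilde\X_M\ast\tilde\X_{P[1]}$; so $\mathcal{F}$ is spanned by ordered-or-unordered words in $\tilde K_\alpha$, $\tilde\X_M$ ($M\in\A$), $\tilde\X_{P[1]}$ ($P\in\P$). Now I would run a straightening/normal-form argument: relation \eqref{h2} moves all $\tilde K$'s to the left; \eqref{h3} merges adjacent $\tilde\X_{P[1]}$-factors; \eqref{h4} lets one rewrite a product $\tilde\X_M\ast\tilde\X_N$ as a $\ZZ[q^{\pm1/2}]$-combination of single $\tilde\X_L$'s (this is where the $K_{\geq0}(\A)$-filtration and the $\deg$ order from the proof of Proposition~\ref{generators} are invoked, to see the induction terminates); and \eqref{h6} lets one move any $\tilde\X_{P[1]}$ that sits to the left of some $\tilde\X_M$ to the right, at the cost of lower-order terms in the same filtration. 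Iterating, every element of $\mathcal{F}$ is a $\ZZ[q^{\pm1/2}]$-combination of normal-form words $\tilde K_\alpha\ast\tilde\X_M\ast\tilde\X_{P[1]}$. Since $\pi$ sends these to the basis of Proposition~\ref{basis}, $\pi$ is injective on the span of normal forms; combined with surjectivity of $\pi$ and the fact that the normal forms span $\mathcal{F}$, this forces $\pi$ to be an isomorphism, which is exactly the claim.

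The main obstacle I anticipate is making the straightening argument genuinely terminate and be well-defined: relations \eqref{h4} and \eqref{h6} do not simply reorder letters, they introduce new (typically shorter or lower-degree) terms, so one must package the induction carefully. The clean way is to reuse the filtration machinery from Proposition~\ref{generators}: put on each normal-form word the invariant $(\Dim,\ \dim_k\End)$ used there (or a suitable refinement tracking also the number of $\X$-factors), check that each application of \eqref{h3}, \eqref{h4}, \eqref{h6} either strictly decreases this invariant or strictly decreases the number of ``inversions'' (occurrences of a $\tilde\X_{P[1]}$ to the left of a $\tilde\X_M$, or of an unmerged pair of $\X$'s), and conclude by Noetherian induction that the rewriting process halts at a normal form. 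A secondary subtlety is bidirectionality in \eqref{h3} and the symmetry clauses in \eqref{h1}, \eqref{h3}: one should fix once and for all a choice of ordering on $\Ind(\A)$ and on $\Ind(\P)$ so that ``normal form'' is unambiguous, and check the relations are compatible with that choice. Once the normal-form theorem is in hand, matching against Proposition~\ref{basis} is immediate and the proof concludes.
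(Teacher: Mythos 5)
Your proposal is correct and follows essentially the same route as the paper: the relations \eqref{h1}--\eqref{h6} are exactly the identities already established (\eqref{KK}, \eqref{KX}, Lemma~\ref{XP}, the computation in Theorem~\ref{main1}, \eqref{XMP}, Theorem~\ref{main2}), and completeness is deduced by straightening words into the normal forms $K_\alpha\ast\X_M\ast\X_{P[1]}$ and matching them against the basis of Proposition~\ref{basis}, which is what the paper's brief appeal to the ``triangular decomposition'' amounts to. Your worry about termination is more easily dispatched than you suggest: \eqref{h3} and \eqref{h4} strictly shorten the word in the $\X$-letters while \eqref{h6} preserves its length and strictly decreases the number of pairs with an $\X_{P[1]}$ to the left of an $\X_M$, so a lexicographic induction on (length, inversion count) suffices without invoking the filtration from Proposition~\ref{generators}.
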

\begin{proof}
By Proposition \ref{basis},
$\mathcal{MH}(\A)$ is spanned by all elements $K_{\alpha}\ast\mathbb{X}_{M\oplus P[1]}=K_{\alpha}\ast\mathbb{X}_{M}\ast\mathbb{X}_{P[1]}$. That is, the multiplication map induces a triangular decomposition of $\mathcal{MH}(\A)$ as a module.
Using the triangular decomposition of $\mathcal{MH}(\A)$, we can easily prove that the relations $(\ref{h1}$-$\ref{h6})$ are the defining relations, which have been obtained in the previous sections.
\end{proof}

\section{Derived Hall algebras}
The derived Hall algebra of the bounded derived category $D^b(\A)$ of $\A$ was introduced in \cite{Toen2006} (see also \cite{XiaoXu}). By definition, the (Drinfeld dual) {\em derived Hall algebra} $\mathcal {D}\mathcal {H}(\A)$ is the free $\ZZ[{q}^{\pm\frac{1}{2}}]$-module with the basis $\{u_{X_\bullet}~|~X_\bullet\in \Iso(D^b(\A))\}$ and the multiplication defined by
\begin{equation}
u_{X_\bullet}\diamond u_{Y_\bullet}=\sum\limits_{[{Z_\bullet}]}\frac{|\Ext^1_{D^b(\A)}({X_\bullet},{Y_\bullet})_{Z_\bullet}|}{\prod\limits_{i\geq0}|\Hom_{D^b(\A)}({X_\bullet}[i],{Y_\bullet})|^{(-1)^i}} u_{Z_\bullet},
\end{equation}
where $\Ext^1_{D^b(\A)}({X_\bullet},{Y_\bullet})_{Z_\bullet}$ is defined to be $\Hom_{D^b(\A)}({X_\bullet},{Y_\bullet}[1])_{{Z_\bullet}[1]}$, which denotes the subset of $\Hom_{D^b(\A)}({X_\bullet},{Y_\bullet}[1])$ consisting of morphisms $f:{X_\bullet}\rightarrow {Y_\bullet}[1]$ whose cone is isomorphic to ${Z_\bullet}[1]$.

Let us  reformulate \cite[Proposition 7.1]{Toen2006} in the form of the Drinfeld dual derived Hall algebras
as the following
\begin{proposition}{\rm(\cite{Toen2006})}
$\mathcal {D}\mathcal {H}(\A)$ is an associative unital algebra generated by the elements in $\{u_{M[i]}~|~M\in\Iso(\A),~i\in \mathbb{Z}\}$ and the following relations
\begin{flalign}
&u_{M[i]}\diamond u_{N[i]}=\sum\limits_{[L]}{\frac{{|\Ext_\mathcal{A}^1{{(M,N)}_L}|}}{{|\Hom_\mathcal{A}(M,N)|}}} u_{L[i]};\\
&u_{M[i+1]}\diamond u_{N[i]}=\sum\limits_{[X],[Y]}q^{-\lr{{Y},{X}}}|{}_X\Hom_{\A}(M,N)_Y| u_{Y[i]}\diamond u_{X[i+1]};\\
&u_{M[i]}\diamond u_{N[j]}=q^{(-1)^{i-j}\lr{N,M}} u_{N[j]}\diamond u_{M[i]}, \quad i-j>1.
\end{flalign}
\end{proposition}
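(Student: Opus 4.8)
The plan is to reformulate Toën's original theorem, stated in terms of the Hom-side derived Hall algebra, into the Drinfeld dual version given here, and then to recognize all three displayed relations as instances of the general multiplication formula specialized to shifts of objects coming from $\A$. First I would recall that since $\A$ is hereditary, every indecomposable object of $D^b(\A)$ is of the form $M[i]$ with $M\in\Iso(\A)$ and $i\in\ZZ$, so by induction on the number of indecomposable summands (exactly as in the proof of Proposition~\ref{generators}) the elements $u_{M[i]}$ generate $\mathcal{DH}(\A)$ as an algebra; associativity and unitality are inherited from \cite{Toen2006,XiaoXu}. It therefore remains only to verify the three families of relations and to check that they are \emph{defining}, i.e.\ that any two monomials in the generators can be reduced to a common normal form using them.

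Next I would establish the three relations by direct computation with the definition of $\diamond$. For relation (6.1), both objects lie in the same shift $\A[i]$; since $\Hom_{D^b(\A)}(M[i][j],N[i])=0$ for $j>0$ (heredity) and equals $\Hom_\A(M,N)$ for $j=0$, while $\Ext^1_{D^b(\A)}(M[i],N[i])_{L[i]}=\Ext^1_\A(M,N)_L$, the general formula collapses to the stated abelian Hall product. For relation (6.3), with $i-j>1$ all Hom and Ext$^1$ groups in $D^b(\A)$ between $M[i]$ and $N[j]$ and between $N[j]$ and $M[i]$ vanish except $\Hom_{D^b(\A)}(M[i],N[j+1])$ when $i=j+1$ — which is excluded — so both products are supported on the single term $u_{M[i]\oplus N[j]}$, and matching the scalars $q^{(-1)^{i-j}\lr{N,M}}$ reduces to tracking the exponent $\sum_k(-1)^k\dim\Hom_{D^b(\A)}(\cdot[k],\cdot)$, which by heredity leaves only one nonzero contribution. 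The crux is relation (6.2), the "cross" commutation between a shift $M[i+1]$ and $N[i]$: here $\Hom_{D^b(\A)}(M[i+1],N[i][1])=\Hom_{D^b(\A)}(M[i+1],N[i+1])=\Hom_\A(M,N)$ is the one nonvanishing higher Hom, and a morphism $f:M[i+1]\to N[i+1]$ is literally a morphism $f:M\to N$ in $\A$; its cone fits into a triangle realizing the four-term sequence $0\to\ker f\to M\to N\to\coker f\to 0$, so the cone is $\coker(f)[i]\oplus\ker(f)[i+1]$ up to iso. Summing over the stratification of $\Hom_\A(M,N)$ by kernel $X$ and cokernel $Y$ — exactly the sets ${}_X\Hom_\A(M,N)_Y$ with $|{}_X\Hom_\A(M,N)_Y|=\sum_{[L]}a_L F^M_{YL}F^L_{X
}$... more precisely $\sum_{[L]}a_LF_{LX}^MF^M_{Y}$ — and comparing with the product $u_{Y[i]}\diamond u_{X[i+1]}$ (which by the $i\ne i+2$ case is supported on $Y[i]\oplus X[i+1]$ with an explicit scalar) yields the asserted identity once the Euler-form exponents are matched; the bookkeeping of the factors $q^{-\lr{Y,X}}$, $a_{X}$, $a_Y$, and the denominators $\prod_k|\Hom(M[i+1][k],N[i])|^{(-1)^k}$ is the one genuinely delicate computation.

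Finally I would argue these relations are a complete set of defining relations. Using (6.3) one moves all generators with widely separated shifts past each other freely; using (6.2) repeatedly one can push every generator $u_{M[i+1]}$ to the right of every $u_{N[i]}$ at the cost of lower-order correction terms; and using (6.1) one multiplies together all generators sitting in a single shift $\A[i]$ via the abelian Hall product. An induction on the tuple of shift-degrees appearing in a monomial, with the same $\deg$-filtration argument as in Proposition~\ref{generators} handling the lower-order terms produced by (6.2), shows every element of $\mathcal{DH}(\A)$ is a $\ZZ[q^{\pm\frac12}]$-combination of ordered monomials $u_{M_{i_1}[i_1]}\diamond\cdots\diamond u_{M_{i_r}[i_r]}$ with $i_1<\cdots<i_r$ and each $M_{i_s}\in\Iso(\A)$; since these ordered monomials are in bijection with $\Iso(D^b(\A))$ and hence form a basis, no further relations can hold. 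The main obstacle is purely computational: verifying relation (6.2) requires carefully identifying cones of degree-shifting maps with the kernel/cokernel decomposition and then reconciling the several powers of $q$ coming from the Euler form, the automorphism groups $a_X,a_Y$, and the alternating-product denominator in the definition of $\diamond$.
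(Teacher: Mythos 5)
Your proposal is essentially correct, but note that the paper offers no internal proof of this proposition to compare against: it is quoted from To\"en's Proposition 7.1 and merely rewritten in the Drinfeld dual normalization. Your direct verification is the standard argument and the key computations check out: for two objects in the same shift all the shifted Hom groups in the denominator vanish except the degree-zero one and the cones of maps $M[i]\to N[i+1]$ are exactly the shifted middle terms of extensions, giving the first relation; a class in $\Ext^1_{D^b(\A)}(M[i+1],N[i])$ is a map $f\in\Hom_{\A}(M,N)$, its cone is $\Coker(f)[i+1]\oplus\Ker(f)[i+2]$, so the corresponding object is $Z=\Coker(f)[i]\oplus\Ker(f)[i+1]$, and combining the stratification of $\Hom_{\A}(M,N)$ by kernel and cokernel with $u_{Y[i]}\diamond u_{X[i+1]}=q^{\lr{Y,X}}u_{Y[i]\oplus X[i+1]}$ yields the second relation; for $i-j>1$ both relevant $\Ext^1$ groups vanish and the alternating product of shifted Homs produces exactly $q^{(-1)^{i-j}\lr{N,M}}$. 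Your completeness argument (ordered monomials with strictly increasing shifts map, up to invertible scalars, bijectively onto the basis $u_{Z}$, $Z\in\Iso(D^b(\A))$, so the canonical surjection from the abstract algebra is an isomorphism) is also the standard one. Two small repairs: in the third relation the scalar comes from \emph{both} surviving terms $\Hom_{\A}(N,M)$ and $\Ext^1_{\A}(N,M)$ of the alternating product, i.e.\ from the Euler form, not from ``only one nonzero contribution''; and the formula you half-remember should read $|{}_X\Hom_{\A}(M,N)_Y|=\sum_{[L]}a_LF^{M}_{LX}F^{N}_{YL}$ (as in Section 4 of the paper), though it is not actually needed here --- the kernel/cokernel stratification suffices, and no automorphism factors $a_X,a_Y$ enter because the Drinfeld dual normalization avoids any Riedtmann--Peng conversion.
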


For any ${X_\bullet},{Y_\bullet}\in D^b(\A)$, define
\begin{equation*}
\lr{{X_\bullet},{Y_\bullet}}:=\sum\limits_{i\in\mathbb{Z}}(-1)^i\dim_k\Hom_{D^b(\A)}({X_\bullet},{Y_\bullet}[i]),
\end{equation*}
it also descends to give a bilinear form on the Grothendieck group of $D^b(\A)$. Moreover, it coincides with the Euler form of $K(\A)$ over the objects in $\A$. In particular, for any $M,N\in\A$ and $i,j\in\mathbb{Z}$, we have that $\lr{M[i],N[j]}=(-1)^{i-j}\lr{M,N}$.

Let us twist the multiplication in $\mathcal {D}\mathcal {H}(\A)$ as follows (cf. \cite{SX}):
\begin{equation}u_{X_\bullet}\ast u_{Y_\bullet}=q^{\lr{{X_\bullet},{Y_\bullet}}} u_{X_\bullet}\diamond u_{Y_\bullet}\end{equation}
for any ${X_\bullet},{Y_\bullet}\in D^b(\A)$.
The \emph{twisted derived Hall algebra} $\mathcal {D}\mathcal {H}_{q}(\A)$ is the same module as $\mathcal {D}\mathcal {H}(\A)$, but with the twisted multiplication. Then we have the following
\begin{proposition}\label{twistderived}
$\mathcal {D}\mathcal {H}_q(\A)$ is an associative unital algebra generated by the elements in $\{u_{M[i]}~|~M\in\Iso(\A),~i\in \mathbb{Z}\}$ and the following relations
\begin{flalign}
&u_{M[i]}\ast u_{N[i]}=q^{\lr{M,N}}\sum\limits_{[L]}{\frac{{|\Ext_\mathcal{A}^1{{(M,N)}_L}|}}{{|\Hom_\mathcal{A}(M,N)|}}}u_{L[i]};\\
&u_{M[i+1]}\ast u_{N[i]}=q^{-\lr{M,N}}\sum\limits_{[X],[Y]}|{}_X\Hom_{\A}(M,N)_Y| u_{Y[i]}\ast u_{X[i+1]};\\
&u_{M[i]}\ast u_{N[j]}=q^{(-1)^{i-j}\lr{M,N}} u_{N[j]}\ast u_{M[i]}, \quad i-j>1.
\end{flalign}
\end{proposition}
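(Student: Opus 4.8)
The plan is to derive Proposition~\ref{twistderived} directly from the untwisted version by a routine transfer of structure, so the only real content is to check that the twisting cocycle $q^{\lr{\cdot,\cdot}}$ is compatible with the three families of relations. First I would recall that passing from $\diamond$ to $\ast$ via $u_{X_\bullet}\ast u_{Y_\bullet}=q^{\lr{X_\bullet,Y_\bullet}}u_{X_\bullet}\diamond u_{Y_\bullet}$ is an isomorphism of $\ZZ[q^{\pm\frac12}]$-modules which is associative precisely because $\lr{\cdot,\cdot}$ is additive on triangles (equivalently, factors through $K(D^b(\A))$), so that $\lr{X_\bullet,Y_\bullet}+\lr{X_\bullet\oplus\text{error terms}, Z_\bullet}$ telescopes correctly; this is the standard cocycle condition $\lr{X,Y}+\lr{XY,Z}=\lr{X,YZ}+\lr{Y,Z}$ which holds because both sides equal $\lr{X,Y}+\lr{X,Z}+\lr{Y,Z}$ after expanding via bilinearity and using that the classes add in the Grothendieck group. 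Hence $\mathcal{DH}_q(\A)$ is an associative unital algebra and it is generated by the same elements $u_{M[i]}$ as $\mathcal{DH}(\A)$.

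Next I would rewrite each of the three defining relations of the untwisted algebra under the substitution. For the first relation, both $u_{M[i]}\diamond u_{N[i]}$ and each $u_{L[i]}$ sit in the same homological degree $i$, and $\lr{M[i],N[i]}=(-1)^{i-i}\lr{M,N}=\lr{M,N}$ while $\lr{L[i],?}$ does not enter — so multiplying the untwisted identity through by $q^{\lr{M[i],N[i]}}=q^{\lr{M,N}}$ yields exactly the stated twisted relation, since $\Ext^1_\A(M,N)_L\neq 0$ forces $\hat L=\hat M+\hat N$ and all terms live in degree $i$. For the third relation with $i-j>1$, one multiplies by $q^{\lr{M[i],N[j]}}=q^{(-1)^{i-j}\lr{M,N}}$ on the left-hand side and by $q^{\lr{N[j],M[i]}}=q^{(-1)^{j-i}\lr{M,N}}=q^{(-1)^{i-j}\lr{M,N}}$ on the right (using that $(-1)^{i-j}=(-1)^{j-i}$), and after cancelling the common power of $q$ the coefficient $q^{(-1)^{i-j}\lr{N,M}}$ becomes $q^{(-1)^{i-j}\lr{M,N}}$; here one also checks that $\lr{N,M}=\lr{M,N}$ is \emph{not} needed — rather one must keep careful track of which argument is which, and the symmetry of the final formula reflects that $\lr{M[i],N[j]}=\lr{N[j],M[i]}$ when $|i-j|>1$ by the degree-parity formula, so the two twisting factors coincide.

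The one genuinely delicate relation is the second (the "commutation past a shift" relation), because the two sides involve products in \emph{both} degrees $i$ and $i+1$ and the combinatorial identity $|{}_X\Hom_\A(M,N)_Y|$ relabels summands. Here I would expand: the left-hand side picks up $q^{\lr{M[i+1],N[i]}}=q^{(-1)\lr{M,N}}=q^{-\lr{M,N}}$, while on the right-hand side the term $u_{Y[i]}\ast u_{X[i+1]}$ carries $q^{\lr{Y[i],X[i+1]}}=q^{(-1)^{-1}\lr{Y,X}}=q^{-\lr{Y,X}}$; combined with the existing coefficient $q^{-\lr{Y,X}}$ in the untwisted formula this would give $q^{-2\lr{Y,X}}$ unless that coefficient is absorbed. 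The resolution is that the untwisted coefficient $q^{-\lr{Y,X}}$ is \emph{exactly} the correction needed: multiplying the untwisted identity by $q^{-\lr{M,N}}$ and moving the factor $q^{\lr{Y[i],X[i+1]}}=q^{-\lr{Y,X}}$ to the other side to form $u_{Y[i]}\ast u_{X[i+1]}$ converts $q^{-\lr{Y,X}}u_{Y[i]}\diamond u_{X[i+1]}$ into $u_{Y[i]}\ast u_{X[i+1]}$ with \emph{no} leftover $q$-power, leaving precisely $q^{-\lr{M,N}}\sum_{[X],[Y]}|{}_X\Hom_\A(M,N)_Y|\,u_{Y[i]}\ast u_{X[i+1]}$. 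Thus the twisting has been chosen so that the awkward exponent $-\lr{Y,X}$ disappears, which is the whole point of the twist. I expect this bookkeeping in the second relation — correctly matching $\lr{M[i+1],N[i]}=-\lr{M,N}$ against the summand-dependent $\lr{Y[i],X[i+1]}=-\lr{Y,X}$ — to be the main (indeed only) obstacle; once it is done, the proof is complete.
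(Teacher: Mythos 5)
Your overall strategy is the intended one: the paper gives no separate argument for Proposition \ref{twistderived}, which is simply To\"en's presentation rewritten through the twist $u_{X_\bullet}\ast u_{Y_\bullet}=q^{\lr{X_\bullet,Y_\bullet}}u_{X_\bullet}\diamond u_{Y_\bullet}$; since $\lr{\cdot,\cdot}$ descends to a bilinear form on the Grothendieck group of $D^b(\A)$ and the classes of the terms occurring in any product add up, the twist is by a $2$-cocycle and associativity, the unit and the generating set carry over. Your treatment of the first two relations is correct, and you rightly identify the second as the only delicate point: the coefficient $q^{-\lr{Y,X}}$ in the untwisted relation is exactly the factor absorbed when $u_{Y[i]}\diamond u_{X[i+1]}$ is rewritten as $u_{Y[i]}\ast u_{X[i+1]}$, because $\lr{Y[i],X[i+1]}=-\lr{Y,X}$.

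Your bookkeeping for the third relation, however, is wrong as written. You claim $q^{\lr{N[j],M[i]}}=q^{(-1)^{j-i}\lr{M,N}}$ and that $\lr{M[i],N[j]}=\lr{N[j],M[i]}$ when $|i-j|>1$; in fact $\lr{N[j],M[i]}=(-1)^{i-j}\lr{N,M}$, and since the Euler form of a hereditary category is not symmetric, the two twisting factors do not coincide, so there is no ``common power of $q$'' to cancel, and the untwisted coefficient $q^{(-1)^{i-j}\lr{N,M}}$ cannot simply ``become'' $q^{(-1)^{i-j}\lr{M,N}}$. The correct mechanism is a cancellation on the right-hand side: write $u_{M[i]}\ast u_{N[j]}=q^{(-1)^{i-j}\lr{M,N}}u_{M[i]}\diamond u_{N[j]}$, apply the untwisted relation $u_{M[i]}\diamond u_{N[j]}=q^{(-1)^{i-j}\lr{N,M}}u_{N[j]}\diamond u_{M[i]}$, and then use $u_{N[j]}\diamond u_{M[i]}=q^{-(-1)^{i-j}\lr{N,M}}u_{N[j]}\ast u_{M[i]}$; the two factors involving $\lr{N,M}$ cancel and only $q^{(-1)^{i-j}\lr{M,N}}$ survives, which is exactly the stated relation. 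So your final formula is right, but for this reason rather than the one you give; with that step repaired your proof is complete and agrees with the paper's (implicit) argument.
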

\begin{remark}
For any $M,N\in\A$ and $i\in\mathbb{Z}$, we have that
$$u_{M[i]}\ast u_{N[i+1]}=u_{M[i]\oplus N[i+1]}.$$
In fact, since $\A$ is hereditary, we obtain that \begin{flalign*}\Hom_{D^b(\A)}(M[i],N[i+1][1])\cong\Ext^2_{\A}(M,N)=0.\end{flalign*}
Thus, \begin{flalign*}u_{M[i]}\diamond u_{N[i+1]}&=\frac{1}{\prod\limits_{j\geq0}|\Hom_{D^b(\A)}(M[i+j],N[i+1])|^{(-1)^j}}u_{M[i]\oplus N[i+1]}\\&=q^{\lr{M,N}} u_{M[i]\oplus N[i+1]}.\end{flalign*}
Hence, $$u_{M[i]}\ast u_{N[i+1]}=q^{-\lr{M,N}} u_{M[i]}\diamond u_{N[i+1]}=u_{M[i]\oplus N[i+1]}.$$
\end{remark}
In order to compare with the subsequent cluster multiplication formulas, we give the following
\begin{corollary}
Given objects $M\in\A$, $P\in\P$ and an injective object $I\in\A$, we have that
\vspace{0.2cm}
\begin{itemize}
\item[(1)] $u_{M}\ast u_{I[-1]}=q^{-\lr{M,I}}\sum\limits_{[B],[I']}|{}_B\Hom_{\A}(M,I)_{I'}| u_{B\oplus I'[-1]};$
\vspace{0.4cm}
\item[(2)] $u_{P[1]}\ast u_M=q^{-\lr{P,M}}\sum\limits_{[B],[Q]}|{}_Q\Hom_{\A}(P,M)_B| u_{B\oplus Q[1]}$.
\end{itemize}
\end{corollary}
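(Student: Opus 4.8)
The plan is to specialize the general multiplication formulas of Proposition~\ref{twistderived} to the two situations at hand and then rewrite the results using the finiteness properties of the hereditary category $\A$, in particular the fact that $\Ext^2_{\A}=0$ and that projective/injective objects interact nicely with the shift functor in $D^b(\A)$.

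First I would prove (2), since it is essentially immediate. Setting $i=0$, $N=M$ and taking $M$ (in the statement) to be the projective $P$ in the relation $u_{M[i+1]}\ast u_{N[i]}=q^{-\lr{M,N}}\sum_{[X],[Y]}|{}_X\Hom_{\A}(M,N)_Y|\,u_{Y[i]}\ast u_{X[i+1]}$, we obtain $u_{P[1]}\ast u_M=q^{-\lr{P,M}}\sum_{[X],[Y]}|{}_X\Hom_{\A}(P,M)_Y|\,u_{Y}\ast u_{X[1]}$. It then remains only to observe that $u_Y\ast u_{X[1]}=u_{Y\oplus X[1]}$, which is exactly the content of the Remark following Proposition~\ref{twistderived} (it uses $\Ext^2_{\A}(Y,X)=0$). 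Relabelling the summation indices $X\rightsquigarrow Q$, $Y\rightsquigarrow B$ gives the stated identity.

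For (1) I would run the same machinery but now using negative shifts, i.e. the ``dual'' instance of the mixed relation. Concretely, take the relation $u_{M[i+1]}\ast u_{N[i]}=q^{-\lr{M,N}}\sum_{[X],[Y]}|{}_X\Hom_{\A}(M,N)_Y|\,u_{Y[i]}\ast u_{X[i+1]}$ with $i=-1$: this reads $u_{M}\ast u_{N[-1]}=q^{-\lr{M,N}}\sum_{[X],[Y]}|{}_X\Hom_{\A}(M,N)_Y|\,u_{Y[-1]}\ast u_{X}$. Specializing $N=I$ to be injective, the hypothesis forces $\Ker(f)$ (here the ``$X$'' of ${}_X\Hom_{\A}(M,I)_Y$) to be a subobject of $M$ while $\Coker(f)=Y$; more importantly, injectivity of $I$ makes $Y=\Coker(f)$ injective as well (it is a quotient of $I$), so the summand $Y[-1]$ is a genuine shifted-injective term. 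Finally, as in the Remark, $u_{Y[-1]}\ast u_X=u_{Y[-1]\oplus X}=u_{X\oplus Y[-1]}$ because $\Hom_{D^b(\A)}(Y[-1],X[1])\cong\Ext^2_{\A}(Y,X)=0$. Renaming $X\rightsquigarrow B$ (the subobject) and $Y\rightsquigarrow I'$ (the injective cokernel) yields (1).

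The only genuinely delicate point is matching the bookkeeping of sub/quotient roles in the symbol ${}_X\Hom_{\A}(M,N)_Y$ with the statement's ${}_B\Hom_{\A}(M,I)_{I'}$ and ${}_Q\Hom_{\A}(P,M)_B$: one must be careful which of kernel/cokernel sits in the subscript versus the left-subscript, and check that the convention in (\ref{kuozhang}) and (\ref{xjs}) is used consistently. Beyond that, everything is a direct substitution into Proposition~\ref{twistderived} together with the vanishing $\Ext^2_{\A}=0$; no new ideas are required, and no auxiliary lemma beyond those already proved is needed.
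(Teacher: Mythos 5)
Your proposal is correct and is exactly the intended derivation: the paper states this corollary without proof precisely because both items follow by specializing the relation $u_{M[i+1]}\ast u_{N[i]}$ of Proposition~\ref{twistderived} at $i=0$ (resp.\ $i=-1$) and then collapsing $u_{Y[i]}\ast u_{X[i+1]}$ into $u_{Y[i]\oplus X[i+1]}$ via the Remark, with the kernel/cokernel subscripts matched as you describe. Your bookkeeping (kernel in the left subscript, cokernel in the right, and $\Coker(f)$ injective as a quotient of $I$) agrees with the paper's conventions, so no gap remains.
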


Let $\mathbb{T}(\A)$ be the group algebra of the Grothendieck group $K(\A)$ over $\ZZ[{q}^{\pm\frac{1}{2}}]$. For each $\alpha\in K(\A)$, we denote by $\T_\alpha$ the element in $\mathbb{T}(\A)$ corresponding to $\alpha$, thus $\T_\alpha\ast\T_\beta=\T_{\alpha+\beta}$. We equip the module $\mathcal {D}\mathcal {H}_q(\A)\otimes_{\ZZ[{q}^{\pm\frac{1}{2}}]}\mathbb{T}(\A)$ with the structure of an algebra (containing $\mathcal {D}\mathcal {H}_q(\A)$ and $\mathbb{T}(\A)$ as subalgebras) by imposing the relations
$\T_\alpha\ast x=x\ast\T_\alpha$ for any $\alpha\in K(\A)$ and $x\in \mathcal {D}\mathcal {H}_q(\A)$, and denote this algebra by $\mathcal {D}\mathcal {H}^{\e}_q(\A)$. We remark that $\mathcal {D}\mathcal {H}^{\e}_q(\A)$ is just the tensor algebra of $\mathcal {D}\mathcal {H}_q(\A)$ and $\mathbb{T}(\A)$.
\begin{theorem}\label{mor_derived}There exists an embedding of algebras $$\xymatrix{\Phi:\M\H(\A)\ar@{^{(}->}[r]&
\mathcal{D}\mathcal{H}_q^{\e}(\A)}$$
defined on generators by
$$K_\alpha\mapsto\T_\alpha,\quad\X_M\mapsto u_M\quad\text{and}\quad \X_{P[1]}\mapsto u_{P[1]}$$
for $\alpha\in K(\A)$, $M\in\A$ and $P\in\P$.
\end{theorem}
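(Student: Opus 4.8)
The plan is to verify that $\Phi$ is a well-defined algebra homomorphism by checking that the images of the generators satisfy all the defining relations \eqref{h1}--\eqref{h6} of $\M\H(\A)$ listed in Theorem \ref{ydygx}, and then to prove injectivity by comparing bases. For well-definedness, relation \eqref{h1} is immediate from $\T_\alpha\ast\T_\beta=\T_{\alpha+\beta}$; relation \eqref{h2} is exactly the imposed commutation $\T_\alpha\ast x=x\ast\T_\alpha$ in $\D\H^{\e}_q(\A)$ together with the remark that $u_M\ast u_{P[1]}=u_{M\oplus P[1]}$; relation \eqref{h3} follows from the hereditary vanishing $\Hom_{D^b(\A)}(P[1],Q[1][1])\cong\Ext^2_{\A}(P,Q)=0$, which forces $u_{P[1]}\ast u_{Q[1]}=u_{(P\oplus Q)[1]}$, and similarly for the opposite order. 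Relation \eqref{h4} is precisely the first relation in Proposition \ref{twistderived} specialized to $i=0$. Relation \eqref{h5} is the remark preceding the last corollary (the case $i=0$ of $u_{M[i]}\ast u_{N[i+1]}=u_{M[i]\oplus N[i+1]}$). Relation \eqref{h6} is exactly part (2) of the last corollary, i.e. the second relation in Proposition \ref{twistderived} with $i=0$. So the verification reduces to bookkeeping among formulas already assembled in Section 6.

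Next I would check injectivity. By Proposition \ref{basis} the elements $K_\alpha\ast\X_M\ast\X_{P[1]}$ with $\alpha\in K(\A)$, $M\in\A$, $P\in\P$ form a $\ZZ[q^{\pm\frac12}]$-basis of $\M\H(\A)$. Their images under $\Phi$ are $\T_\alpha\ast u_M\ast u_{P[1]}=\T_\alpha\ast u_{M\oplus P[1]}$. In $\D\H^{\e}_q(\A)$, which is the tensor algebra of $\D\H_q(\A)$ and $\T(\A)$, these are $\ZZ[q^{\pm\frac12}]$-linearly independent: the $\T_\alpha$ are a basis of $\T(\A)$, and the $u_{M\oplus P[1]}$ for $M\in\A$, $P\in\P$ are part of the standard basis $\{u_{X_\bullet}\mid X_\bullet\in\Iso(D^b(\A))\}$ of $\D\H_q(\A)$ (distinct pairs $(M,P)$ give non-isomorphic objects $M\oplus P[1]$ of $D^b(\A)$ since the decomposition into a degree-$0$ part and a shifted-projective degree-$1$ part is unique). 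Hence $\Phi$ sends a basis to a linearly independent set, so it is injective.

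The step I expect to require the most care is the well-definedness, specifically confirming that relation \eqref{h6} really matches the derived Hall relation on the nose — one must check that the twisting exponents agree (the factor $q^{-\lr{P,M}}$ in \eqref{h6} versus $q^{-\lr{M,N}}$ with $N=M$, $i=0$ in Proposition \ref{twistderived}) and that the term $u_{Y[0]}\ast u_{X[1]}$ is correctly identified with $u_{Y\oplus X[1]}=\Phi(\X_{Y\oplus X[1]})$ via the remark, and similarly that in \eqref{h6} the quantity $\sum_{[B],[Q]}|{}_Q\Hom_\A(P,M)_B|$ ranges over the same index set as in the derived formula. A subtlety here is that in the derived Hall algebra $X$ ranges over all of $\Iso(\A)$ rather than over projectives, so one needs that ${}_Q\Hom_\A(P,M)_B$ is automatically zero unless $Q=\Ker(P\to M)$ is projective — which holds because $\A$ is hereditary and a subobject of the projective $P$ is projective — ensuring the two sums literally coincide. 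Once these identifications are in place, the relations $(\ref{h1})$--$(\ref{h6})$ are all matched and, combined with the basis argument, the proof is complete. \fin
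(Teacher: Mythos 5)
Your proposal is correct and takes essentially the same route as the paper: check that the defining relations (\ref{h1})--(\ref{h6}) of Theorem \ref{ydygx} are preserved using Proposition \ref{twistderived} together with the remark and corollary of Section 6 (including the observation that the kernel $Q$ of a map $P\to M$ is automatically projective, so the index sets agree), and then obtain injectivity from the fact that the basis $K_\alpha\ast\X_{M\oplus P[1]}$ is sent to the linearly independent elements $\T_\alpha\ast u_{M\oplus P[1]}$ of the tensor algebra $\mathcal{D}\mathcal{H}_q^{\e}(\A)$. One small correction that does not affect the argument: for relation (\ref{h3}) the relevant space is $\Hom_{D^b(\A)}(P[1],Q[1][1])\cong\Ext^1_{\A}(P,Q)$, not $\Ext^2_{\A}(P,Q)$; it vanishes because $P$ is projective, and together with $\lr{P,Q}=\dim_k\Hom_{\A}(P,Q)$ the relation follows from the first relation of Proposition \ref{twistderived} with $i=1$.
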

\begin{proof}
By Proposition \ref{twistderived} and the definition of $\mathcal{D}\mathcal{H}_q^{\e}(\A)$, we know that
all the relations in Theorem \ref{ydygx} are preserved under $\Phi$, so $\Phi$ is a homomorphism of algebras.
The injectivity of $\Phi$ follows from the fact that $\Phi$ sends the basis
$$\{K_\alpha\ast\X_{M\oplus P[1]}~|~\alpha\in K(\A),M\in\A,P\in\P\}$$
of $\M\H(\A)$ to a linearly independent set in $\mathcal{D}\mathcal{H}_q^{\e}(\A)$.
\end{proof}

\section{Cluster multiplication formulas for acyclic quivers}
In this section, we recall the definitions of the quantum cluster algebra and quantum Caldero--Chapoton map, and give the multiplication formulas of quantum cluster characters.

\subsection{Quantum cluster algebras}
Let $\Lambda$ be an $m\times m$ skew-symmetric integral matrix, and denote by $\{{\bf e}_1,\cdots,{\bf e}_m\}$ the standard basis of $\mathbb{Z}^{m}$. Let $\mathfrak{q}$ be a formal variable
and $\ZZ[\mathfrak{q}^{\pm\frac{1}{2}}]$ be the ring of integral Laurent polynomials.
Define the  quantum torus  associated to the pair
$(\mathbb{Z}^{m},\Lambda)$ to be the $\ZZ[\mathfrak{q}^{\pm\frac{1}{2}}]$-algebra $\mathcal{T}_{\mathfrak{q}}$ with
a distinguished basis $\{X^{\bf e}: {\bf e}\in \mathbb{Z}^{m}\}$ and the
multiplication given by
\[X^{\bf e}X^{\bf f}=\mathfrak{q}^{\Lambda({\bf e},{\bf f})/2}X^{{\bf e+f}},\]
where we still denote by $\Lambda$ the skew-symmetric bilinear form on $\mathbb{Z}^{m}$ associated to the skew-symmetric matrix $\Lambda$. It is well-known that $\mathcal{T}_{\mathfrak{q}}$ is an Ore domain, and thus is contained in its
skew-field of fractions $\mathcal{F}_{\mathfrak{q}}$.

Let $\tilde{B}=(b_{ij})$ be an $m\times n$ integral matrix with $n\le m$.  We call the pair
$(\Lambda, \tilde{B})$ {\em compatible} if $\tilde{B}^{tr}\Lambda=(D|0)$ for some
$D=diag(d_1,\cdots,d_n)$, where each $d_i$ is a positive integer and  $\tilde{B}^{tr}$ denotes the transpose of  $\tilde{B}$. An {\em initial  quantum seed} for $\mathcal{F}_{\mathfrak{q}}$ is a triple
$(\Lambda, \tilde{B}, X)$  consisting of a compatible pair $(\Lambda,\tilde{B})$ and the set  $X=\{X_1,\cdots,X_m\}$, where each $X_i$ denotes $X^{{\bf e_i}}$. For any $1\leq k\leq n$, we  define the mutation of $(\Lambda, \tilde{B}, X)$ in direction $k$ to obtain the new quantum seed $(\Lambda',\tilde{B}',X')$ as follows:

(1)\ $\Lambda'=E^{tr}\Lambda E$, where the
$m\times m$ matrix $E=(e_{ij})$ is given by
\[e_{ij}=\begin{cases}
\delta_{ij} & \text{if $j\ne k$;}\\
-1 & \text{if $i=j=k$;}\\
max(0,-b_{ik}) & \text{if $i\ne j = k$.}
\end{cases}
\]

(2)\ $\tilde{B}'=(b'_{ij})$ is given by
\[b'_{ij}=\begin{cases}
-b_{ij} & \text{if $i=k$ or $j=k$;}\\
b_{ij}+\frac{|b_{ik}|b_{kj}+b_{ik}|b_{kj}|}{2} & \text{otherwise.}
\end{cases}
\]

(3)\  $X'=\{X'_1,\cdots,X'_m\}$ is given by
\begin{align}
X_k'&=X^{\sum_{1\leq i\leq m}[b_{ik}]_{+} {\bf e}_i -{\bf e}_k}+X^{\sum_{1\leq i\leq m}[-b_{ik}]_{+} {\bf e}_i -{\bf e}_k},\nonumber\\
X_i'&=X_i ,\quad 1\leq i\leq m,\quad i\neq k,\nonumber
\end{align}
where for each integer $a$ we set $[a]_{+}:=max\{0,a\}$.

Two quantum seeds  $(\Lambda, \tilde{B}, X)$ and  $(\Lambda', \tilde{B}', X')$ are
called {\em mutation-equivalent}, denoted by $(\Lambda, \tilde{B}, X)\sim(\Lambda', \tilde{B}', X')$, if they can be obtained from each other
by a sequence of mutations. Let $\mathcal{C}=\{X'_i~|~ (\Lambda', \tilde{B}', X')\sim(\Lambda, \tilde{B}, X),1\leq i\leq n\}$, and
the elements in $\mathcal{C}$ are called the {\em quantum cluster
variables}. Let $\mathbb{P}=\{X_i~|~n+1\leq i\leq m\}$, and the
elements in $\mathbb{P}$ are called the {\em coefficients}. Denote by
$\ZZ\mathbb{P}$ the ring of  Laurent polynomials in the elements of $\mathbb{P}$ with coefficients in $\ZZ[\mathfrak{q}^{\pm\frac{1}{2}}]$. Then the
{\em quantum cluster algebra}
$\mathcal{A}_{\mathfrak{q}}(\Lambda,\tilde{B})$ is defined to be the
$\ZZ\mathbb{P}$-subalgebra of $\mathcal{F}_{\mathfrak{q}}$ generated by all
quantum cluster variables.

\subsection{Quantum Caldero--Chapoton map and cluster multiplication formulas }\label{quantum_cluster_char}
Fix a positive integer $n$ and let $Q=\{Q_0, Q_1, s, t\}$ be an acyclic quiver with the vertex set $Q_0=
\{1, 2,\cdots, n\}$ and arrow set $Q_1$. We denote by $s(\rho)$ and $t(\rho)$ the source and
target of an arrow $\rho\in Q_1$, respectively. Take $\A=\A_{Q}$ to be the category of finite dimensional $kQ$-modules. Then the Grothendieck group $K(\A)$ is isomorphic to $\mathbb{Z}Q_0\cong\mathbb{Z}^n$. There is a bilinear form $\lr{-,-}:\mathbb{Z}^n\times\mathbb{Z}^n\to\mathbb{Z}$ defined by
$$\lr{{\bf x},{\bf y}}=\sum\limits_{i\in Q_0}x_iy_i-\sum\limits_{\rho\in Q_1}x_{s(\rho)}y_{t(\rho)}$$
for ${\bf x}=(x_i),{\bf y}=(y_i)\in\mathbb{Z}^n$,
which is called the {\em Euler form} of $Q$. It is well known that this form coincides with the (homological) Euler form defined in (\ref{Euler form}).

Let $B(Q)$ and $R(Q)$ be the $n\times n$ matrixes with the $i$-th row and $j$-th column element given respectively by
$$b_{ij}=|\{\mathrm{arrows}\, i\longrightarrow
j\}|-|\{\mathrm{arrows}\, j\longrightarrow i\}|$$
and
$$r_{ij}=|\{\mathrm{arrows}\, j\longrightarrow i\}|.$$

By definition, it is easy to see that for any ${\bf x}, {\bf y}\in\mathbb{Z}^n$
$$\lr{{\bf x},{\bf y}}={\bf x}^{tr}(I_n-R(Q)^{tr}){\bf y},$$
where $I_n$ is the $n\times n$ identity matrix. Moreover, for each projective $kQ$-module $P$,
$$(I_n-R(Q))\Dim P=\Dim (P/\rad P).$$

For each fixed integer $m\geq n$, we choose a quiver $\widetilde{Q}$ with the vertex set $\{1,2,\cdots,m\}$ such that $Q$ is a full subquiver of $\widetilde{Q}$.
Let $\widetilde{I}$, $\widetilde{B}$, $\widetilde{R}$ and $\widetilde{R}'$ be the
left $m\times n$ submatrixes of the matrixes $I_{m}$, $B(\widetilde{Q})$, $R(\widetilde{Q})$ and $R(\widetilde{Q})^{tr}$, respectively. For concision, we write $B$ and $R$ for $B(Q)$ and $R(Q)$, respectively.
Note that $\widetilde{B}=\widetilde{R}'-\widetilde{R}$ and $B=R^{tr}-R$.
We always assume that there exists a skew-symmetric $m\times m$ integral
matrix $\Lambda$ such that
\begin{align}\label{eq:simply_laced_compatible}
\Lambda(-\widetilde{B})={I_n\choose0}.\end{align}We remark that such $\widetilde{Q}$ and $\Lambda$ exist for a given quiver $Q$ (cf. \cite{rupel}).

Let $\mathcal C_{\widetilde{Q}}$ be the cluster category of $k
\widetilde{Q}$, i.e., the orbit category of the bounded derived category
$\mathcal{D}^b(k\widetilde{Q})$ under the action of  the functor
$F=\tau^{-1}\circ[1]$ (cf. \cite{BMRRT}). For each $1\leq i \leq
m$, let $S_i$ be the simple $k \widetilde{Q}$-module corresponding to the vertex $i$, and let $P_i$ be the projective cover of $S_i$.  Then the indecomposable $k \widetilde{Q}$-modules and all $P_i[1]$
exhaust all indecomposable objects of $\mathcal C_{\widetilde{Q}}$. Each object $X$ in
$\mathcal C_{\widetilde{Q}}$ can be uniquely decomposed as
$$X=M\oplus P[1]$$
where $M$ is a $k \widetilde{Q}$-module and $P$ is a projective $k \widetilde{Q}$-module.

In what follows, we adopt the convention that for each given module we will use the corresponding lowercase boldface letter to denote its dimension vector. For example, given a $kQ$-module $X$, its dimension vector is denoted by ${\bf x}$, and the dimension vector of $X$ viewed as a $k\widetilde{Q}$-module is also denoted by ${\bf x}$ if there is no confusion. Thus, for each $kQ$-module $X$ we have that $(\widetilde{I}-\widetilde{R}){\bf x}=(I_m-R(\widetilde{Q})){\bf x}$.

The quantum Caldero--Chapoton map associated to an acyclic quiver
$Q$ has been defined in \cite{rupel} and \cite{fanqin}.
In \cite{rupel}, the author defined the quantum Caldero--Chapoton map
for $kQ$-modules while in \cite{fanqin} for
coefficient-free rigid objects in $\mathcal C_{\widetilde{Q}}$.
For our purpose, we need to modify  the definition as follows:

Let $M$ be a $kQ$-module and $P$ be a projective $k\widetilde{Q}$-module. We define the \emph{quantum cluster character}
\begin{equation}X_{M\oplus P[1]}=\sum_{\bf{e}} |\mathrm{Gr}_{\bf{e}} M|q^{\frac{1}{2}
\langle
\bf{p}-\bf{e},\bf{m}-\bf{e}\rangle}X^{-\widetilde{B}{\bf e}-(\widetilde{I}-\widetilde{R}'){\bf m}+{\bf t}_P},\end{equation}
where ${\bf t}_P=\Dim (P/\rad P)$ and
$\mathrm{Gr}_{\bf{e}}M$ denotes the set of all submodules $V$
of $M$ with $\Dim V= \bf{e}$. In particular, $X_{P[1]}=X^{{\bf t}_P}$.

Given a finite acyclic quiver $Q$, denote by $\mathcal{AH}_{q}(Q)$ the
$\mathbb{ZP}$-subalgebra of $\mathcal{F}_q$ generated by
all the quantum cluster characters $X_{M\oplus P[1]}$
with $M$ being a $k Q$-module and $P$ being a projective $k
\widetilde{Q}$-module, where we appoint that $P$ can be taken to be zero.

The following lemma is needed for the proof of the subsequent theorem.
\begin{lemma}\label{AE}
For any $kQ$-module $M$ and projective $k\widetilde{Q}$-module $P$, we have that
\begin{equation}
\Lambda(\widetilde{B}{\bf m},{\bf t}_P)=\lr{{\bf p},\bf{m}}.
\end{equation}
\begin{proof}
Noting that ${\bf t}_P=(I_{m}-R(\widetilde{Q})){\bf p}$, by definition, we have that
\begin{flalign*}\Lambda(\widetilde{B}{\bf m},{\bf t}_P)&=\Lambda(\widetilde{B}{\bf m},(I_{m}-R(\widetilde{Q})){\bf p})\\
&={\bf m}^{tr}{\widetilde{B}}^{tr}\Lambda(I_{m}-R(\widetilde{Q})){\bf p}\\
&={\bf m}^{tr}(I_n,0)(I_{m}-R(\widetilde{Q})){\bf p}\\
&={\bf p}^{tr}(I_{m}-R(\widetilde{Q})^{tr}){I_n\choose0
}{\bf m}\\
&={\bf p}^{tr}(I_{m}-R(\widetilde{Q})^{tr}){\bf m}\\
&=\lr{{\bf p},{\bf m}},\end{flalign*}
where we should be reminded that ${I_n\choose0}
{\bf m}=\widetilde{I}{\bf m}$ is the dimension vector of $M$ viewed as a $k\widetilde{Q}$-module, and it is still denoted by ${\bf m}$.
\end{proof}
\end{lemma}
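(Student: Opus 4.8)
The plan is to verify the claimed identity $\Lambda(\widetilde{B}{\bf m},{\bf t}_P)=\lr{{\bf p},{\bf m}}$ by a direct chain of manipulations, exploiting the compatibility relation (\ref{eq:simply_laced_compatible}) and the formula ${\bf t}_P=(I_m-R(\widetilde{Q})){\bf p}$ recorded earlier in this section. First I would substitute ${\bf t}_P=(I_m-R(\widetilde{Q})){\bf p}$ into the left-hand side, so that the expression becomes $\Lambda(\widetilde{B}{\bf m},(I_m-R(\widetilde{Q})){\bf p})$; since $\Lambda$ is a bilinear form, this equals ${\bf m}^{tr}\widetilde{B}^{tr}\Lambda(I_m-R(\widetilde{Q})){\bf p}$ after writing $\Lambda$ as a matrix acting on the two vectors.

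The key step is then to invoke the compatibility condition. Equation (\ref{eq:simply_laced_compatible}) reads $\Lambda(-\widetilde{B})={I_n\choose0}$, equivalently $\widetilde{B}^{tr}\Lambda=-(I_n,0)$; however, I expect the sign conventions to line up so that the relevant substitution gives $\widetilde{B}^{tr}\Lambda=(I_n,0)$ in the form actually used (one should double-check this against (\ref{eq:simply_laced_compatible}), since $\Lambda$ is skew-symmetric and $\Lambda(-\widetilde{B})=(-\widetilde{B})^{tr}\Lambda^{tr}\cdot(\text{sign})$ can flip a sign). Granting this, ${\bf m}^{tr}\widetilde{B}^{tr}\Lambda(I_m-R(\widetilde{Q})){\bf p}={\bf m}^{tr}(I_n,0)(I_m-R(\widetilde{Q})){\bf p}$. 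Next I would transpose the scalar: ${\bf m}^{tr}(I_n,0)(I_m-R(\widetilde{Q})){\bf p}={\bf p}^{tr}(I_m-R(\widetilde{Q})^{tr}){I_n\choose0}{\bf m}$, using that $(I_m-R(\widetilde{Q}))^{tr}=I_m-R(\widetilde{Q})^{tr}$ and $(I_n,0)^{tr}={I_n\choose0}$.

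Finally I would identify the right-hand side with the Euler form. By the convention fixed just before the lemma, ${I_n\choose0}{\bf m}=\widetilde{I}{\bf m}$ is precisely the dimension vector of $M$ regarded as a $k\widetilde{Q}$-module, still written ${\bf m}$; and the formula $\lr{{\bf x},{\bf y}}={\bf x}^{tr}(I-R^{tr}){\bf y}$ recorded for the Euler form of $\widetilde{Q}$ gives ${\bf p}^{tr}(I_m-R(\widetilde{Q})^{tr}){\bf m}=\lr{{\bf p},{\bf m}}$. Since $P$ is a $k\widetilde{Q}$-module and $M$ is also viewed as a $k\widetilde{Q}$-module here, this Euler form is the one over $K(\A_{\widetilde{Q}})$, and by the remark that the homological and combinatorial Euler forms coincide, it equals $\lr{{\bf p},{\bf m}}$ as stated. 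I do not anticipate a serious obstacle; the only delicate point is bookkeeping the sign in the compatibility relation and making sure the ambient quiver for each Euler form (namely $\widetilde{Q}$ versus $Q$) is used consistently, which the stated proof finesses by the dimension-vector convention.
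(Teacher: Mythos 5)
Your proposal is correct and follows essentially the same chain of manipulations as the paper: substitute ${\bf t}_P=(I_m-R(\widetilde{Q})){\bf p}$, apply the compatibility relation, transpose the scalar, and read off the Euler form of $\widetilde{Q}$ via the convention ${I_n\choose 0}{\bf m}=\widetilde{I}{\bf m}$. The sign you left to check does come out as you expected: transposing $-\Lambda\widetilde{B}={I_n\choose 0}$ and using $\Lambda^{tr}=-\Lambda$ gives $\widetilde{B}^{tr}\Lambda=(I_n,0)$ (so your parenthetical ``equivalently $\widetilde{B}^{tr}\Lambda=-(I_n,0)$'' is the one spurious sign, and the paper uses $(I_n,0)$ exactly as in your corrected reading).
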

\begin{theorem}\label{exchange2}
Let $M$ be any $kQ$-module  and $P$ any projective
$k\widetilde{Q}$-module. Then
\vspace{0.2cm}
\begin{itemize}
  \item [(1)] $\ X_{M}X_{P[1]}=q^{-\frac{1}{2}\Lambda((\widetilde{I}-\widetilde{R}){\bf m},{\bf t}_P)}X_{M\oplus
P[1]};$
\vspace{0.4cm}
\item [(2)] $\ X_{P[1]}X_{M}=q^{\frac{1}{2}\Lambda((\widetilde{I}-\widetilde{R}){\bf m},{\bf t}_P)-\lr{{\bf p},{\bf m}}}
\sum\limits_{[B],[Q]}|{}_Q\Hom_{k\widetilde{Q}}(P,M)_B| X_{B\oplus Q[1]}.$
\end{itemize}
\end{theorem}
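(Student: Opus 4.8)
The plan is to prove both identities by direct expansion of the quantum cluster character formula, treating part (1) as a warm-up whose bookkeeping then feeds into part (2). For part (1), I would start from $X_M = \sum_{\bf e}|\mathrm{Gr}_{\bf e}M|\,q^{\frac12\langle{\bf m}-{\bf e},{\bf m}-{\bf e}\rangle}X^{-\widetilde B{\bf e}-(\widetilde I-\widetilde R'){\bf m}}$ (the $kQ$-module case, where ${\bf p}={\bf m}$ in the exponent since $P=0$ there) and $X_{P[1]}=X^{{\bf t}_P}$, then multiply in $\mathcal T_{\mathfrak q}$ using $X^{\bf e}X^{\bf f}=q^{\Lambda({\bf e},{\bf f})/2}X^{{\bf e}+{\bf f}}$. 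The only term that is not already present term-by-term in the definition of $X_{M\oplus P[1]}$ is the overall scalar $q^{\frac12\Lambda(-\widetilde B{\bf e}-(\widetilde I-\widetilde R'){\bf m},\,{\bf t}_P)}$; I would split this as $q^{-\frac12\Lambda(\widetilde B{\bf e},{\bf t}_P)}\cdot q^{-\frac12\Lambda((\widetilde I-\widetilde R'){\bf m},{\bf t}_P)}$, observe (using compatibility $\widetilde B^{tr}\Lambda=(I_n|0)$, i.e.\ $\Lambda(\widetilde B\,{\bf x},-)=\cdots$) that $\Lambda(\widetilde B{\bf e},{\bf t}_P)$ depends on ${\bf e}$ only through its image under $(\widetilde I - \widetilde R'){\bf m}$-type bookkeeping — more precisely I would use Lemma \ref{AE} with ${\bf m}$ replaced appropriately and the identity $\widetilde B=\widetilde R'-\widetilde R$ to reorganize the ${\bf e}$-dependence so it cancels against a matching factor inside the Grothendieck-sum, leaving exactly the stated prefactor $q^{-\frac12\Lambda((\widetilde I-\widetilde R){\bf m},{\bf t}_P)}$. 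The key arithmetic input is that $(\widetilde I-\widetilde R'){\bf m}$ and $(\widetilde I-\widetilde R){\bf m}$ differ by $\widetilde B{\bf m}$, and Lemma \ref{AE} converts $\Lambda(\widetilde B{\bf m},{\bf t}_P)$ into the Euler form $\langle{\bf p},{\bf m}\rangle$.

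For part (2), the strategy is to expand $X_{P[1]}X_M = X^{{\bf t}_P}\cdot\sum_{\bf e}|\mathrm{Gr}_{\bf e}M|\,q^{\frac12\langle{\bf m}-{\bf e},{\bf m}-{\bf e}\rangle}X^{-\widetilde B{\bf e}-(\widetilde I-\widetilde R'){\bf m}}$ in $\mathcal T_{\mathfrak q}$, obtaining the same monomials as in (1) but with the opposite-sign commutation scalar $q^{+\frac12\Lambda(\widetilde B{\bf e}+(\widetilde I-\widetilde R'){\bf m},{\bf t}_P)}$. Then I would reinterpret the resulting expression combinatorially: on the right-hand side, the sum $\sum_{[B],[Q]}|{}_Q\mathrm{Hom}_{k\widetilde Q}(P,M)_B|\,X_{B\oplus Q[1]}$ should be expanded using the definition of each $X_{B\oplus Q[1]}$, and one must match Grothendieck-subgroup data. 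The bridge is the homological identity $|{}_Q\mathrm{Hom}_{\A}(P,M)_B| = \sum_{[L]}a_L F^P_{LQ}F^M_{BL}$ from \eqref{xjs}, together with the observation (from the four-term exact sequence $0\to Q\to P\to M\to B\to 0$) that $\Dim B = {\bf m}-{\bf p}+\Dim Q$ and that a submodule $V\subseteq M$ of dimension ${\bf e}$ corresponds, via pullback along $P\to M$, to the relevant Hom-data; this is exactly the kind of counting that underlies the cluster multiplication formulas in \cite{DX, BR}. Concretely I would show that the coefficient of a fixed monomial $X^{\gamma}$ on the left equals its coefficient on the right by partitioning $\mathrm{Gr}_{\bf e}M$ according to the induced quotient structure and applying the Riedtmann--Peng/Green-type identity in \eqref{xjs}; the exponent bookkeeping on the $X^{\gamma}$ side is handled exactly as in part (1), now producing the extra factor $q^{-\langle{\bf p},{\bf m}\rangle}$ via Lemma \ref{AE} applied with the genuine projective $P$.

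I would organize the write-up so that the ${\bf q}$-power bookkeeping (all applications of $X^{\bf e}X^{\bf f}=q^{\Lambda({\bf e},{\bf f})/2}X^{{\bf e}+{\bf f}}$, plus Lemma \ref{AE} and the identities $\widetilde B=\widetilde R'-\widetilde R$, $B=R^{tr}-R$, $(I_m-R(\widetilde Q))\Dim P={\bf t}_P$) is isolated in a short lemma or in the opening paragraph, and the combinatorial matching of Grothendieck data via \eqref{xjs} is the substantive step. The main obstacle I anticipate is precisely this combinatorial identification in part (2): one must verify that summing $|\mathrm{Gr}_{\bf e}M|$ against the commutation scalar reproduces, term by term in the monomial basis of $\mathcal T_{\mathfrak q}$, the expansion of $\sum_{[B],[Q]}|{}_Q\mathrm{Hom}_{k\widetilde Q}(P,M)_B|X_{B\oplus Q[1]}$ — this requires carefully tracking how a morphism $f\colon P\to M$ with prescribed kernel $Q$ and cokernel $B$ determines (and is determined by) a submodule $\im f\subseteq M$ together with compatible lifting data, and then checking that the dimension-vector exponents and the half-integer $q$-powers on both sides agree identically. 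Everything else is routine linear algebra over $\mathbb Z$ together with the already-established properties of the Euler form and the compatibility condition \eqref{eq:simply_laced_compatible}.
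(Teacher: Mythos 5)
Your overall strategy --- expand both sides in the quantum torus, commute monomials via $X^{\bf e}X^{\bf f}=q^{\Lambda({\bf e},{\bf f})/2}X^{{\bf e}+{\bf f}}$, convert $\Lambda(\widetilde{B}\,\cdot,{\bf t}_P)$ into Euler forms by Lemma \ref{AE}, and match the two sides through (\ref{xjs}) --- is essentially the approach the paper takes. But there are two concrete problems. First, your starting formula for $X_M$ is wrong: when $P=0$ one has ${\bf p}=0$, not ${\bf p}={\bf m}$, so the definition gives $X_M=\sum_{\bf e}|\mathrm{Gr}_{\bf e}M|\,q^{-\frac12\lr{{\bf e},{\bf m}-{\bf e}}}X^{-\widetilde{B}{\bf e}-(\widetilde{I}-\widetilde{R}'){\bf m}}$. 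With your exponent $\frac12\lr{{\bf m}-{\bf e},{\bf m}-{\bf e}}$ each term differs from the correct one by $q^{\frac12\lr{{\bf m},{\bf m}-{\bf e}}}$, which is ${\bf e}$-dependent and cannot be absorbed into an overall prefactor, so the computation in (1) would not close. In the correct computation the ${\bf e}$-dependent term produced by Lemma \ref{AE}, namely $\frac12\lr{{\bf p},{\bf m}-{\bf e}}$, does not ``cancel'': it combines with $-\frac12\lr{{\bf e},{\bf m}-{\bf e}}$ to reproduce exactly the exponent $\frac12\lr{{\bf p}-{\bf e},{\bf m}-{\bf e}}$ occurring in the definition of $X_{M\oplus P[1]}$, leaving the stated prefactor.

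Second, in part (2) you name the right tools but leave the decisive step unexecuted, and your sketch omits its two key ingredients. After writing $X_M=\sum_{[H],[G]}q^{-\frac12\lr{{\bf h},{\bf g}}}F^M_{GH}X^{-\widetilde{B}{\bf h}-(\widetilde{I}-\widetilde{R}'){\bf m}}$ and expanding each $X_{B\oplus Q[1]}$ over submodules $Y\subseteq B$ with quotient $G$, one inserts (\ref{xjs}) and then must invoke the Hall-number associativity (\ref{jiehe}), $\sum_{[B]}F^M_{BL}F^B_{GY}=\sum_{[H]}F^M_{GH}F^H_{YL}$, to trade the pair (submodule of $M$ with quotient $B$, submodule of $B$) for a submodule $H\subseteq M$ with quotient $G$ together with data inside $H$; the bookkeeping ${\bf b}+{\bf l}={\bf m}$, ${\bf q}+{\bf l}={\bf p}$, ${\bf y}+{\bf l}={\bf h}$ then shows every monomial equals $X^{-\widetilde{B}{\bf h}-(\widetilde{I}-\widetilde{R}'){\bf m}+{\bf t}_P}$. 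Finally --- and this, combined with ${\bf h}+{\bf g}={\bf m}$, is where the discrepancy $q^{\lr{{\bf p},{\bf m}}}$ between the two sides actually arises --- one sums out $[Q],[Y],[L]$ using $\sum_{[Q],[Y]}|{}_Q\Hom_{\A}(P,H)_Y|=|\Hom_{\A}(P,H)|=q^{\lr{{\bf p},{\bf h}}}$, valid because $P$ is projective so $\Ext^1_{\A}(P,H)=0$. Your attribution of the extra factor $q^{-\lr{{\bf p},{\bf m}}}$ to ``Lemma \ref{AE} applied with the genuine projective $P$'' is therefore not accurate: Lemma \ref{AE} only handles the torus-commutation exponent on the left-hand side, and without the associativity step and the $|\Hom_{\A}(P,H)|$ count the coefficient matching you yourself flag as the main obstacle cannot be completed.
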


\begin{proof}
$(1)$~~By definition,
\begin{flalign*}
   X_{M}X_{P[1]}\nonumber
   &=\sum_{\bf{e}} |\mathrm{Gr}_{\bf{e}} M|q^{-\frac{1}{2}\langle
\bf{e},\bf{m}-\bf{e}\rangle}X^{-\widetilde{B}{\bf e}-(\widetilde{I}-\widetilde{R}')\bf{m}} X^{{\bf t}_P}\\
  &=\sum_{\bf{e}} |\mathrm{Gr}_{\bf{e}} M|q^{-\frac{1}{2}
\langle{\bf e},\bf{m}-\bf{e}\rangle}q^{\frac{1}{2}\Lambda(-\widetilde{B}{\bf e}
-(\widetilde{I}-\widetilde{R}'){\bf m},{\bf t}_P)}X^{-\widetilde{B}{\bf e}-(\widetilde{I}-\widetilde{R}'){\bf m}+{\bf t}_P}.
\end{flalign*}
Noting that
\begin{flalign*}
\Lambda(-\widetilde{B}{\bf e}
-(\widetilde{I}-\widetilde{R}'){\bf m},{\bf t}_P)
   &=\Lambda(-\widetilde{B}{\bf e}+\widetilde{B}{\bf m}
-(\widetilde{I}-\widetilde{R}){\bf m},{\bf t}_P)\nonumber\\
&=\Lambda(-(\widetilde{I}-\widetilde{R}){\bf m},{\bf t}_P)+\Lambda(\widetilde{B}({\bf m}-{\bf e}),{\bf t}_P)\\
 &=-\Lambda((\widetilde{I}-\widetilde{R}){\bf m},{\bf t}_P)+\langle \bf{p},\bf{m}-\bf{e}\rangle,
\end{flalign*}
we obtain that
\begin{flalign*}
X_{M}X_{P[1]}
&=q^{-\frac{1}{2}\Lambda((\widetilde{I}-\widetilde{R}){\bf m},{\bf t}_P)}\sum_{\bf{e}} |\mathrm{Gr}_{\bf{e}} M|q^{-\frac{1}{2}\langle
\bf{e},\bf{m}-\bf{e}\rangle}q^{\frac{1}{2}\langle
\bf{p},\bf{m}-\bf{e}\rangle}X^{-\widetilde{B}{\bf e}-(\widetilde{I}-\widetilde{R}'){\bf m}+{\bf t}_P}\\
&=q^{-\frac{1}{2}\Lambda((\widetilde{I}-\widetilde{R}){\bf m},{\bf t}_P)}X_{M\oplus
P[1]}.
\end{flalign*}

$(2)$~~On the one hand, by definition,
\begin{flalign*}
X_{P[1]}X_{M}&=\sum_{[H],[G]}q^{-\frac{1}{2}\langle {\bf h},{\bf g}\rangle}F^{M}_{GH}X^{{\bf t}_P}X^{-\widetilde{B}{\bf h}-(\widetilde{I}-\widetilde{R}')\bf{m}}\\
&=\sum_{[H],[G]}q^{-\frac{1}{2}\langle {\bf h},{\bf g}\rangle}F^{M}_{GH}q^{\frac{1}{2}\Lambda({\bf t}_P,-\widetilde{B}{\bf h}-(\widetilde{I}-\widetilde{R}')\bf{m})}X^{-\widetilde{B}{\bf h}-(\widetilde{I}-\widetilde{R}'){\bf m}
  +{\bf t}_P}\\
&=q^{\frac{1}{2}\Lambda((\widetilde{I}-\widetilde{R}){\bf m},{\bf t}_P)}\sum_{[H],[G]}q^{-\frac{1}{2}\langle
{\bf p}+{\bf h},{\bf g}\rangle}F^{M}_{GH}X^{-\widetilde{B}{\bf h}-(\widetilde{I}-\widetilde{R}'){\bf m}+{\bf t}_P}.
\end{flalign*}

On the other hand, by definition,
\begin{flalign*}&\sum_{[B],[Q]}|{}_Q\Hom_{k\widetilde{Q}}(P,M)_B|X_{B\oplus
Q[1]}\\&=\sum_{[B],[Q],[L],[G],[Y]}a_LF^{P}_{LQ}F^{M}_{BL}q^{\frac{1}{2}\langle
{\bf q}-{\bf y},{\bf g}\rangle}F^{B}_{GY}X^{-\widetilde{B}{\bf y}-(\widetilde{I}-\widetilde{R}'){\bf b}+{\bf t}_Q}\\
&=\sum_{[H],[Q],[L],[G],[Y]}q^{\frac{1}{2}\langle
{\bf q}-{\bf y},{\bf g}\rangle}a_LF^{P}_{LQ}F^{H}_{YL}F^{M}_{GH}X^{-\widetilde{B}{\bf y}-(\widetilde{I}-\widetilde{R}')\bf{b}+{\bf t}_Q}\\
&=\sum_{[H],[Q],[G],[Y]}q^{\frac{1}{2}\langle
{\bf q}-{\bf y},{\bf g}\rangle}|{}_Q\Hom_{k\widetilde{Q}}(P,H)_Y|F^{M}_{GH}X^{-\widetilde{B}{\bf y}-(\widetilde{I}-\widetilde{R}')\bf{b}+{\bf t}_Q}
\end{flalign*}where we have used the associativity formula (\ref{jiehe}) for the second equation.

Noting that
$$\bf{b}+\bf{l}=\bf{m},~~\bf{q}+\bf{l}=\bf{p},~~\bf{y}+\bf{l}=\bf{h},~~\text{and~~thus}~~\bf{q}-\bf{y}=\bf{p}-\bf{h},$$
we have that
\begin{flalign*}
&-\widetilde{B}{\bf y}-(\widetilde{I}-\widetilde{R}'){\bf b}+{\bf t}_Q\\
&=-\widetilde{B}{\bf h}+\widetilde{B}{\bf l}-(\widetilde{I}-\widetilde{R}'){\bf b}+(I_{m}-R(\widetilde{Q})){\bf q}\\
&=-\widetilde{B}{\bf h}+\widetilde{R}'{\bf l}-\widetilde{R}{\bf l}
-\widetilde{I}{\bf b}+\widetilde{R}'{\bf b}+(I_{m}-R(\widetilde{Q})){\bf q}\\
&=-\widetilde{B}{\bf h}+\widetilde{R}'{\bf m}-\widetilde{R}{\bf l}
-\widetilde{I}({\bf m}-{\bf l})+(I_{m}-R(\widetilde{Q})){\bf q}\\
&=-\widetilde{B}{\bf h}-(\widetilde{I}-\widetilde{R}'){\bf m}
  +(\widetilde{I}-\widetilde{R}){\bf l}+(I_{m}-R(\widetilde{Q})){\bf q}\\
&=-\widetilde{B}{\bf h}-(\widetilde{I}-\widetilde{R}'){\bf m}
  +(I_{m}-R(\widetilde{Q})){\bf l}+(I_{m}-R(\widetilde{Q})){\bf q}\\
&=-\widetilde{B}{\bf h}-(\widetilde{I}-\widetilde{R}'){\bf m}
  +(I_{m}-R(\widetilde{Q})){\bf p}\\
  &=-\widetilde{B}{\bf h}-(\widetilde{I}-\widetilde{R}'){\bf m}
  +{\bf t}_P.
\end{flalign*}
Hence,
\begin{flalign*}
&\sum_{[B],[Q]}|{}_Q\Hom_{k\widetilde{Q}}(P,M)_B|X_{B\oplus
Q[1]}\\
&=\sum_{[H],[Q],[G],[Y]}q^{\frac{1}{2}\langle
{\bf p}-{\bf h},{\bf g}\rangle}|{}_Q\Hom_{k\widetilde{Q}}(P,H)_Y|F^{M}_{GH}X^{-\widetilde{B}{\bf h}-(\widetilde{I}-\widetilde{R}'){\bf m}
  +{\bf t}_P}\\
  &=\sum_{[H],[G]}q^{\frac{1}{2}\langle
{\bf p}-{\bf h},{\bf g}\rangle+\lr{{\bf p},{\bf h}}}F^{M}_{GH}X^{-\widetilde{B}{\bf h}-(\widetilde{I}-\widetilde{R}'){\bf m}
  +{\bf t}_P}\\
&=q^{\lr{{\bf p},{\bf m}}}\sum_{[H],[G]}q^{-\frac{1}{2}\lr{{\bf p}+{\bf h},{\bf g}}}F^{M}_{GH}X^{-\widetilde{B}{\bf h}-(\widetilde{I}-\widetilde{R}'){\bf m}
  +{\bf t}_P}.
\end{flalign*}
\end{proof}

\section{Quantum cluster algebras via Hall algebras}
Let $Q$ be an acyclic quiver with the vertex set $\{1,2,\cdots,n\}$. As stated in \cite{Rup}, a result of Fomin and Zelevinsky in \cite{FZ4} asserts that the cluster variables
are completely determined by the cluster variables of the principal coefficient quantum cluster algebra.
So we consider the quiver $\widetilde{Q}$ associated to $Q$ as follows: add the vertices $\{n+1,\dots,2n\}$ to the quiver $Q$ with the additional arrows
$n+i\longrightarrow i$ for any $1\leq i\leq n$. By \cite{BZ05}, we can take a skew-symmetric $2n\times 2n$ integral
matrix $\Lambda$ such that
\begin{align}\label{eq:simply_laced_compatible2}
\Lambda(-\widetilde{B})={I_n\choose0}.
\end{align}

In order to relate the Hall algebra $\M\H(\A_Q)$ to the algebra $\mathcal{AH}_{q}(Q)$, we twist the multiplication on $\M\H(\A_Q)$, and define $\mathcal{MH}_{\Lambda}(\A_Q)$ to be the same module as $\mathcal{MH}(\A_Q)$ but with the twisted
multiplication defined on basis elements by
{\begin{equation}\begin{split}
   &(K_{\alpha}\ast\mathbb{X}_{M\oplus P[1]})\star (K_{\beta}\ast\mathbb{X}_{N\oplus Q[1]})=\\&
   q^{\frac{1}{2}\Lambda((\widetilde{I}-\widetilde{R})({\bf m}-{\bf p})-\widetilde{\alpha},(\widetilde{I}-\widetilde{R})(\bf{n}
   -\bf{q})-\widetilde{\beta})}
   (K_{\alpha}\ast\mathbb{X}_{M\oplus P[1]})\ast (K_{\beta}\ast\mathbb{X}_{N\oplus Q[1]}),\end{split}
\end{equation}}
where $M, N\in\A_Q$, $P, Q\in\P$, $\alpha,\beta\in \mathbb{Z}^{n}$, and for each $\alpha\in \mathbb{Z}^{n}$, $\widetilde{\alpha}:={0\choose\alpha}
\in \mathbb{Z}^{2n}$.

For any $\alpha\in \mathbb{Z}^{n}$, $M\in\A_Q$ and $P\in\P$, we define
$$\grad(K_{\alpha}\ast\mathbb{X}_{M\oplus P[1]}):=(\widetilde{I}-\widetilde{R})({\bf m}-{\bf p})-\widetilde{\alpha},$$
and for each $\varpi\in\mathbb{Z}^{2n}$ we define $\mathcal{MH}(\A_Q)_\varpi$ to be the submodule of $\mathcal{MH}(\A_Q)$ spanned by the elements $K_{\alpha}\ast\mathbb{X}_{M\oplus P[1]}$ with
$\grad(K_{\alpha}\ast\mathbb{X}_{M\oplus P[1]})=\varpi$.

The following lemma shows that the functions $\grad$ provide a $\mathbb{Z}^{2n}$-grading on the Hall algebra $\mathcal{MH}(\A_Q)$.
\begin{lemma}
For any $\varpi_1,\varpi_2\in\mathbb{Z}^{2n}$, we have that
\begin{equation*}\mathcal{MH}(\A_Q)_{\varpi_1}\ast\mathcal{MH}(\A_Q)_{\varpi_2}\subseteq\mathcal{MH}(\A_Q)_{\varpi_1+\varpi_2}.\end{equation*}
That is, $\mathcal{MH}(\A_Q)$ is a $\mathbb{Z}^{2n}$-graded algebra.
\end{lemma}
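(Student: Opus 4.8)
The plan is to verify the grading property on the spanning set of basis elements $K_{\alpha}\ast\mathbb{X}_{M\oplus P[1]}$ and then extend by bilinearity. Since $\mathcal{MH}(\A_Q)$ has the basis $\{K_{\alpha}\ast\mathbb{X}_{M\oplus P[1]}\}$ by Proposition \ref{basis}, it suffices to show that for any two basis elements the product (with respect to the untwisted multiplication $\ast$, since the lemma is stated for $\ast$) lies in the graded piece indexed by the sum of the two gradings. By the defining relations (\ref{h1}--\ref{h6}) in Theorem \ref{ydygx}, every product $K_{\alpha}\ast\mathbb{X}_{M\oplus P[1]}\ast K_{\beta}\ast\mathbb{X}_{N\oplus Q[1]}$ can be rewritten, using (\ref{h2}) to move $K_\beta$ past $\mathbb{X}_{M\oplus P[1]}$ and (\ref{h5}) to split $\mathbb{X}_{M\oplus P[1]}=\mathbb{X}_M\ast\mathbb{X}_{P[1]}$, as $K_{\alpha+\beta}\ast\mathbb{X}_M\ast(\mathbb{X}_{P[1]}\ast\mathbb{X}_N)\ast\mathbb{X}_{Q[1]}$. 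Thus the whole computation reduces to understanding the grading of the three basic products: $\mathbb{X}_M\ast\mathbb{X}_N$ via (\ref{h4}), $\mathbb{X}_{P[1]}\ast\mathbb{X}_N$ via (\ref{h6}), and $\mathbb{X}_{P[1]}\ast\mathbb{X}_{Q[1]}$ via (\ref{h3}), together with the observation that $K_\alpha$ contributes $-\widetilde\alpha$ additively and $K_\alpha\ast K_\beta = K_{\alpha+\beta}$.

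First I would record that on a single generator the grading is $\grad(K_\alpha\ast\mathbb{X}_{M\oplus P[1]})=(\widetilde{I}-\widetilde{R})(\mathbf{m}-\mathbf{p})-\widetilde\alpha$, which is plainly additive in the $K$-part and in direct sums of the module-plus-shift part (using $\mathbf{m}\mapsto\mathbf{m}+\mathbf{n}$ under $\oplus$ and $\mathbf{p}\mapsto\mathbf{p}+\mathbf{q}$). Then I would check each of the three products. For (\ref{h4}): if $L$ appears in $\mathbb{X}_M\ast\mathbb{X}_N$ with nonzero coefficient, there is a short exact sequence $0\to N\to L\to M\to 0$, so $\mathbf{l}=\mathbf{m}+\mathbf{n}$ in $K(\A_Q)\cong\mathbb{Z}^n$, hence $(\widetilde{I}-\widetilde{R})\mathbf{l}=(\widetilde{I}-\widetilde{R})\mathbf{m}+(\widetilde{I}-\widetilde{R})\mathbf{n}$, matching the sum of gradings. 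For (\ref{h3}): $\mathbb{X}_{P[1]}\ast\mathbb{X}_{Q[1]}=\mathbb{X}_{(P\oplus Q)[1]}$ and $-(\widetilde{I}-\widetilde{R})(\mathbf{p}+\mathbf{q})$ is the sum of $-(\widetilde{I}-\widetilde{R})\mathbf{p}$ and $-(\widetilde{I}-\widetilde{R})\mathbf{q}$. For (\ref{h6}): if $\mathbb{X}_{B\oplus Q[1]}$ appears in $\mathbb{X}_{P[1]}\ast\mathbb{X}_M$, then there is an exact sequence $0\to Q\to P\to M\to B\to 0$ (from the discussion preceding Theorem \ref{main2}), so $\mathbf{b}-\mathbf{q}=\mathbf{m}-\mathbf{p}$ in $\mathbb{Z}^n$, giving $(\widetilde{I}-\widetilde{R})(\mathbf{b}-\mathbf{q})=(\widetilde{I}-\widetilde{R})(\mathbf{m}-\mathbf{p})=-\bigl(-(\widetilde{I}-\widetilde{R})\mathbf{p}\bigr)+(\widetilde{I}-\widetilde{R})\mathbf{m}$, which is exactly $\grad(\mathbb{X}_{P[1]})+\grad(\mathbb{X}_M)$.

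Assembling these, a general product $(K_{\alpha}\ast\mathbb{X}_{M\oplus P[1]})\ast(K_{\beta}\ast\mathbb{X}_{N\oplus Q[1]})$ becomes a $\ZZ[q^{\pm\frac12}]$-linear combination of basis elements each of grading $(\widetilde{I}-\widetilde{R})(\mathbf{m}-\mathbf{p})-\widetilde\alpha+(\widetilde{I}-\widetilde{R})(\mathbf{n}-\mathbf{q})-\widetilde\beta=\varpi_1+\varpi_2$, so the product lies in $\mathcal{MH}(\A_Q)_{\varpi_1+\varpi_2}$; extending bilinearly over all pairs of graded components gives the stated inclusion. I expect the only mildly delicate point to be bookkeeping the four-term exact sequence in case (\ref{h6}) to extract the dimension-vector identity $\mathbf{b}-\mathbf{q}=\mathbf{m}-\mathbf{p}$ correctly; everything else is a direct translation of the multiplication relations of Theorem \ref{ydygx} into the Grothendieck group, where all the needed short exact sequences were already produced in Sections 3--5.
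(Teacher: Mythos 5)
Your proposal is correct and follows essentially the same route as the paper: expand the product of two basis elements via the relations of Theorem \ref{ydygx} (moving the $K$'s left, then applying the formulas for $\mathbb{X}_{P[1]}\ast\mathbb{X}_N$, $\mathbb{X}_M\ast\mathbb{X}_B$ and $\mathbb{X}_{P[1]}\ast\mathbb{X}_{Q[1]}$), and check that every resulting basis element has grading $\varpi_1+\varpi_2$ using exactly the dimension-vector identities $\mathbf{b}-\mathbf{q}=\mathbf{m}-\mathbf{p}$ from the four-term exact sequence and $\mathbf{l}=\mathbf{m}+\mathbf{b}$ from the extensions, which is precisely the paper's computation.
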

\begin{proof}
Let $M, N\in\A_Q$, $P, Q\in\P$, $\alpha,\beta\in \mathbb{Z}^{n}$ such that $\grad(K_{\alpha}\ast\mathbb{X}_{M\oplus P[1]})=\varpi_1$ and $\grad(K_{\beta}\ast\mathbb{X}_{N\oplus Q[1]})=\varpi_2$. Then
\begin{flalign*}
&(K_{\alpha}\ast\mathbb{X}_{M\oplus P[1]})\ast (K_{\beta}\ast\mathbb{X}_{N\oplus Q[1]})\\&=
K_{\alpha}\ast\mathbb{X}_{M}\ast\X_{P[1]}\ast K_{\beta}\ast\mathbb{X}_{N}\ast\X_{Q[1]}\\
&=K_{\alpha+\beta}\ast\mathbb{X}_{M}\ast\X_{P[1]} \ast\mathbb{X}_{N}\ast\X_{Q[1]}\\
&=\sum\limits_{[B],[Q']}q^{-\lr{P,N}}|{}_{Q'}\Hom_{\A}(P,N)_B|K_{\alpha+\beta}\ast\X_M\ast\X_B\ast\X_{(Q'\oplus Q)[1]}\\
&=\sum\limits_{[B],[Q'],[L]}q^{-\lr{P,N}+\lr{M,B}}|{}_{Q'}\Hom_{\A}(P,N)_B|\cdot\frac{|\mathrm{Ext}_{\A}^{1}(M,B)_{L}|}{|\mathrm{Hom}_{\A}(M,B)|}K_{\alpha+\beta}\ast\X_{L\oplus(Q'\oplus Q)[1]}.
\end{flalign*}
For each $K_{\alpha+\beta}\ast\X_{L\oplus(Q'\oplus Q)[1]}$ in the summation above, since ${\bf l}={\bf m}+{\bf b}={\bf m}+{\bf q'}-{\bf p}+{\bf n}$, we obtain that \begin{flalign*}\grad(K_{\alpha+\beta}\ast\X_{L\oplus(Q'\oplus Q)[1]})&=(\widetilde{I}-\widetilde{R})({\bf l}-({\bf q'}+{\bf q}))-(\widetilde{\alpha}+\widetilde{\beta})\\&=(\widetilde{I}-\widetilde{R})({\bf m}+{\bf n}-{\bf p}-{\bf q})-(\widetilde{\alpha}+\widetilde{\beta})\\
&=\grad(K_{\alpha}\ast\mathbb{X}_{M\oplus P[1]})+\grad (K_{\beta}\ast\mathbb{X}_{N\oplus Q[1]}).
\end{flalign*}
\end{proof}

\begin{lemma}
The algebra $\mathcal{MH}_{\Lambda}(\A_Q)$ is still associative.
\end{lemma}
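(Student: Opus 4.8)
The statement to prove is that the twisted multiplication $\star$ on $\mathcal{MH}_{\Lambda}(\A_Q)$ is associative. The strategy is to realize $\star$ as a \emph{cocycle twist} of the already-associative multiplication $\ast$ on $\mathcal{MH}(\A_Q)$, using the $\mathbb{Z}^{2n}$-grading established in the preceding lemma. Concretely, for a homogeneous basis element $x=K_{\alpha}\ast\mathbb{X}_{M\oplus P[1]}$ write $|x|:=\grad(x)\in\mathbb{Z}^{2n}$, so that the preceding lemma says $|x\ast y|=|x|+|y|$ whenever $x\ast y$ is written in the basis. Then the twist is simply
\begin{equation*}
x\star y=\mathfrak{q}^{\frac{1}{2}\Lambda(|x|,|y|)}\,x\ast y,
\end{equation*}
for homogeneous $x,y$, extended bilinearly. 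This is exactly the shape of the defining formula, once one checks that $\widetilde{\alpha}={0\choose\alpha}$ and $(\widetilde{I}-\widetilde{R})({\bf m}-{\bf p})$ together assemble into $\grad$.

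First I would record the general principle: if $(R,\ast)$ is a $G$-graded associative algebra for an abelian group $G$ and $\sigma:G\times G\to \ZZ[\mathfrak{q}^{\pm\frac12}]^{\times}$ is a $2$-cocycle (i.e.\ $\sigma(g,h)\sigma(g+h,k)=\sigma(h,k)\sigma(g,h+k)$), then the new product $x\star y=\sigma(|x|,|y|)\,x\ast y$ on homogeneous elements is again associative; this is a one-line computation comparing $(x\star y)\star z$ and $x\star(y\star z)$ using the grading to evaluate $\sigma$ on the correct degrees and then invoking associativity of $\ast$. Next I would verify that $\sigma(\varpi_1,\varpi_2):=\mathfrak{q}^{\frac12\Lambda(\varpi_1,\varpi_2)}$ is a $2$-cocycle: since $\Lambda$ is a \emph{bilinear} form, $\Lambda(\varpi_1,\varpi_2)+\Lambda(\varpi_1+\varpi_2,\varpi_3)=\Lambda(\varpi_1,\varpi_2)+\Lambda(\varpi_1,\varpi_3)+\Lambda(\varpi_2,\varpi_3)$, which is symmetric enough that the cocycle identity holds on the nose (indeed bilinearity gives even a stronger ``bicharacter'' property, of which the $2$-cocycle condition is a consequence). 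Then associativity of $\star$ on homogeneous basis elements follows immediately, and hence on all of $\mathcal{MH}_{\Lambda}(\A_Q)$ by bilinearity.

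The one genuine thing to check — and where I expect the only real friction — is that the exponent in the \emph{defining} formula for $\star$ really is $\frac12\Lambda(\grad x,\grad y)$, i.e.\ that
\begin{equation*}
\Lambda\bigl((\widetilde{I}-\widetilde{R})({\bf m}-{\bf p})-\widetilde{\alpha},\ (\widetilde{I}-\widetilde{R})({\bf n}-{\bf q})-\widetilde{\beta}\bigr)=\Lambda(\grad(K_{\alpha}\ast\mathbb{X}_{M\oplus P[1]}),\ \grad(K_{\beta}\ast\mathbb{X}_{N\oplus Q[1]})),
\end{equation*}
which is true by the very definition of $\grad$ given just above. So in fact there is nothing to reconcile: the twist cocycle \emph{is} $\mathfrak{q}^{\frac12\Lambda(\grad\,\cdot\,,\,\grad\,\cdot\,)}$ by construction. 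Thus the proof reduces to the two bullet points above: (i) $\mathcal{MH}(\A_Q)$ is $\mathbb{Z}^{2n}$-graded under $\ast$ with grading $\grad$ (the previous lemma), and (ii) twisting a $G$-graded associative algebra by a bilinear-form cocycle $\mathfrak{q}^{\frac12\Lambda(-,-)}$ preserves associativity. I would write this out in two short paragraphs, the second being the three-line cocycle-twist computation. No part of this requires returning to the Hall-algebra structure constants; the grading lemma already did that work.

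\textbf{Main obstacle.} There is no serious obstacle; the only point requiring a moment's care is bookkeeping the passage between the index set $\mathbb{Z}^n$ (for the $K_\alpha$, via $\widetilde{\alpha}$) and the ambient $\mathbb{Z}^{2n}$ where $\Lambda$ and $\grad$ live, and confirming that the previous lemma's additivity of $\grad$ is stated for the \emph{untwisted} product $\ast$ — which it is — so that it legitimately feeds into the cocycle-twist argument for $\star$.
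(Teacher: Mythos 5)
Your proof is correct and is essentially the paper's own argument: the paper likewise checks associativity on basis elements by showing both $((x\star y)\star z)$ and $(x\star(y\star z))$ equal $q^{\frac{1}{2}e}\,x\ast y\ast z$ with $e$ the sum of the three pairwise $\Lambda$-pairings of the degrees, which is exactly your cocycle-twist computation using the $\mathbb{Z}^{2n}$-grading lemma and bilinearity of $\Lambda$. You have merely packaged the same calculation in the general language of twisting a graded associative algebra by a bicharacter, so no further comparison is needed.
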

\begin{proof}
For any $L,M,N\in\A$, $P,Q,R\in\P$ and $\alpha,\beta,\gamma\in\mathbb{Z}^{n}$, it is easy to see that
\begin{flalign*}&((K_{\alpha}\ast\mathbb{X}_{M\oplus P[1]})\star (K_{\beta}\ast\mathbb{X}_{N\oplus Q[1]}))\star (K_{\gamma}\ast\mathbb{X}_{L\oplus R[1]})\\&=
q^{\frac{1}{2}e}(K_{\alpha}\ast\mathbb{X}_{M\oplus P[1]})\ast (K_{\beta}\ast\mathbb{X}_{N\oplus Q[1]})\ast (K_{\gamma}\ast\mathbb{X}_{L\oplus R[1]})\\&
=(K_{\alpha}\ast\mathbb{X}_{M\oplus P[1]})\star ((K_{\beta}\ast\mathbb{X}_{N\oplus Q[1]})\star (K_{\gamma}\ast\mathbb{X}_{L\oplus R[1]}))\end{flalign*}
where $e=\Lambda((\widetilde{I}-\widetilde{R})({\bf m}-{\bf p})-\widetilde{\alpha},(\widetilde{I}-\widetilde{R})({\bf n}
-{\bf q})-\widetilde{\beta})+\Lambda((\widetilde{I}-\widetilde{R})({\bf m}-{\bf p})-\widetilde{\alpha},(\widetilde{I}-\widetilde{R})
({\bf l}-{\bf r})-\widetilde{\gamma})+\Lambda((\widetilde{I}-\widetilde{R})({\bf n}-{\bf q})-\widetilde{\beta},
(\widetilde{I}-\widetilde{R})({\bf l}
-{\bf r})-\widetilde{\gamma})$. That is, the algebra $\mathcal{MH}_{\Lambda}(\A_Q)$ is associative.\end{proof}

Using Theorem \ref{ydygx}, we write down the defining relations of $\mathcal{MH}_{\Lambda}(\A_Q)$ in the following
\begin{proposition}
The algebra $\mathcal{MH}_{\Lambda}(\A_Q)$ is generated by all $K_{\alpha}$ and $\mathbb{X}_{M\oplus P[1]}$ $($with $\alpha\in\mathbb{Z}^{n}$, $M\in\A_Q$ and $P\in\P$$)$,
which are subject to the following relations
\begin{equation}\label{x1}
\begin{split}K_{\alpha}\star K_{\beta}=q^{\frac{1}{2}\Lambda(\widetilde{\alpha},\widetilde{\beta})}K_{\alpha+\beta}=q^{\Lambda(\widetilde{\alpha},\widetilde{\beta})}K_{\beta}\star K_{\alpha};\end{split}\end{equation}
\begin{equation}\label{x2}
\begin{split}K_{\alpha}\star\mathbb{X}_{M\oplus P[1]}
    =q^{-\Lambda(\widetilde{\alpha},(\widetilde{I}-\widetilde{R})({\bf m}-{\bf p}))}\mathbb{X}_{M\oplus P[1]}\star K_{\alpha};\end{split}\end{equation}
\begin{equation}\label{x3}
\begin{split}
\mathbb{X}_{P[1]}\star \mathbb{X}_{Q[1]}&=
q^{\frac{1}{2}\Lambda((\widetilde{I}-\widetilde{R}){\bf p},(\widetilde{I}-\widetilde{R})\bf{q}
)}\mathbb{X}_{(P\oplus Q)[1]}\\
&=q^{\Lambda((\widetilde{I}-\widetilde{R}){\bf p},(\widetilde{I}-\widetilde{R})\bf{q})}\mathbb{X}_{Q[1]}\star\mathbb{X}_{P[1]};
\end{split}\end{equation}
\begin{equation}\label{x4}\mathbb{X}_{M}\star\mathbb{X}_{N}=q^{\frac{1}{2}\Lambda((\widetilde{I}-\widetilde{R}){\bf m},
(\widetilde{I}-\widetilde{R})\bf{n})+\langle\bf{m},\bf{n}\rangle}\sum_{[L]}\frac{|\mathrm{Ext}_{\A}^{1}(M,N)_{L}|}{|\mathrm{Hom}_{\A}(M,N)|}\mathbb{X}_L;\end{equation}
\begin{equation}\label{x5}\mathbb{X}_{M}\star\mathbb{X}_{P[1]}=q^{-\frac{1}{2}\Lambda((\widetilde{I}-\widetilde{R}){\bf m},(\widetilde{I}-\widetilde{R}){\bf p})}\mathbb{X}_{M\oplus
P[1]};\end{equation}
\begin{equation}\label{x6}\mathbb{X}_{P[1]}\star\mathbb{X}_{M}=q^{-\frac{1}{2}\Lambda((\widetilde{I}-\widetilde{R}){\bf p},(\widetilde{I}-\widetilde{R}){\bf m})-\lr{{\bf p},{\bf m}}}
\sum\limits_{[B],[Q]}|{}_Q\Hom_{\A}(P,M)_B|\mathbb{X}_{B\oplus Q[1]};\end{equation}
for any $\alpha,\beta\in\mathbb{Z}^n$, $M,N\in\A_Q$ and $P,Q\in\P$.
\end{proposition}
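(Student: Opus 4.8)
The plan is to obtain the relations (\ref{x1})--(\ref{x6}) by rescaling the defining relations (\ref{h1})--(\ref{h6}) of $\mathcal{MH}(\A_Q)$ from Theorem \ref{ydygx}. Writing $\grad$ for the $\mathbb{Z}^{2n}$-grading introduced above, the definition of $\star$ says precisely that on basis elements
\[
x\star y=q^{\frac12\Lambda(\grad x,\grad y)}\,x\ast y .
\]
So the first step is to record the grades of the generators occurring in each relation: $\grad K_\alpha=-\widetilde{\alpha}$, $\grad\mathbb{X}_M=(\widetilde{I}-\widetilde{R}){\bf m}$, $\grad\mathbb{X}_{P[1]}=-(\widetilde{I}-\widetilde{R}){\bf p}$, and more generally $\grad\mathbb{X}_{M\oplus P[1]}=(\widetilde{I}-\widetilde{R})({\bf m}-{\bf p})$; these are immediate from the definition of $\grad$.

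The second step is a term-by-term substitution. For (\ref{x1}), (\ref{h1}) gives $K_\alpha\star K_\beta=q^{\frac12\Lambda(-\widetilde{\alpha},-\widetilde{\beta})}K_{\alpha+\beta}=q^{\frac12\Lambda(\widetilde{\alpha},\widetilde{\beta})}K_{\alpha+\beta}$, and comparing with $K_\beta\star K_\alpha$ yields the $q^{\Lambda(\widetilde{\alpha},\widetilde{\beta})}$-twisted commutativity; relation (\ref{x3}) is handled the same way from (\ref{h3}). For (\ref{x2}) one combines the prefactors attached to $K_\alpha\star\mathbb{X}_{M\oplus P[1]}$ and to $\mathbb{X}_{M\oplus P[1]}\star K_\alpha$ with the commutation (\ref{h2}), using the skew-symmetry of $\Lambda$; for (\ref{x5}) and (\ref{x6}) one just multiplies (\ref{h5}), resp.\ (\ref{h6}), by the appropriate power of $q^{\frac12\Lambda(\cdot,\cdot)}$, noting that the term $-\langle{\bf p},{\bf m}\rangle$ is already present on the right of (\ref{h6}). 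For (\ref{x4}) one uses (\ref{h4}) together with the observation that every $L$ with $\mathrm{Ext}^1_{\A}(M,N)_L\neq0$ has ${\bf l}={\bf m}+{\bf n}$, so $\grad\mathbb{X}_L$ is constant along the sum and equals $\grad\mathbb{X}_M+\grad\mathbb{X}_N$; hence the single prefactor $q^{\frac12\Lambda((\widetilde{I}-\widetilde{R}){\bf m},(\widetilde{I}-\widetilde{R}){\bf n})}$ can be pulled out, giving (\ref{x4}) with the extra $\langle{\bf m},{\bf n}\rangle$ inherited from (\ref{h4}).

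The one point deserving care --- and it is exactly what the $\mathbb{Z}^{2n}$-grading lemma proved just above delivers --- is that in the two relations with summed right-hand sides, (\ref{h4}) and (\ref{h6}), the scalar $q^{\frac12\Lambda(\grad x,\grad y)}$ is uniform over the sum. For (\ref{h6}) this follows because each contributing pair $(B,Q)$ fits into a four-term exact sequence $0\to Q\to P\to M\to B\to0$, so ${\bf b}-{\bf q}={\bf m}-{\bf p}$ and therefore $\grad\mathbb{X}_{B\oplus Q[1]}=(\widetilde{I}-\widetilde{R})({\bf m}-{\bf p})=\grad\mathbb{X}_{P[1]}+\grad\mathbb{X}_M$. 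With that in hand, since (\ref{h1})--(\ref{h6}) are a complete set of defining relations for $\mathcal{MH}(\A_Q)$ by Theorem \ref{ydygx}, and $\mathcal{MH}_{\Lambda}(\A_Q)$ is the same module whose (associative) product differs from $\ast$ by an invertible rescaling in $\ZZ[q^{\pm\frac12}]$, the rescaled identities (\ref{x1})--(\ref{x6}) form a complete set of defining relations for $\mathcal{MH}_{\Lambda}(\A_Q)$. I do not expect a genuine obstacle: the argument is pure bookkeeping of the bilinear form $\Lambda$ against the grades, the only subtlety being the uniformity just noted, which the grading lemma settles.
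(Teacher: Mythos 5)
Your proposal is correct and is essentially the paper's own (implicit) argument: the paper likewise obtains (\ref{x1})--(\ref{x6}) by rescaling the defining relations (\ref{h1})--(\ref{h6}) of Theorem \ref{ydygx} via the identity $x\star y=q^{\frac12\Lambda(\grad x,\grad y)}\,x\ast y$ on graded basis elements, with the $\mathbb{Z}^{2n}$-grading lemma and the associativity lemma supplying the needed compatibility. Your only slight overstatement is the ``uniformity over the sum'' worry: the twisting scalar depends only on the grades of the two factors, not on the summands of the right-hand side, so that check is automatic (the uniformity of grades is what the grading lemma uses, and it matters for associativity rather than for the term-by-term substitution itself).
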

\begin{theorem}\label{alg-homo}
There exists a surjective algebra homomorphism $$\xymatrix{\Psi: \mathcal{MH}_{\Lambda}(\A_Q)\ar@{->>}[r]& \mathcal{AH}_{q}(Q)}$$ defined on generators by
$$K_{\alpha}\mapsto X^{\widetilde{\alpha}}\quad\text{and}\quad\mathbb{X}_{M\oplus P[1]}\mapsto X_{M\oplus P[1]}$$
for any $\alpha\in\mathbb{Z}^n$, $M\in\A_Q$ and $P\in\P$.
\end{theorem}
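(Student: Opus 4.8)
The plan is to verify that the map $\Psi$ respects each of the defining relations \eqref{x1}--\eqref{x6} of $\mathcal{MH}_{\Lambda}(\A_Q)$ listed in the preceding proposition, and then to argue surjectivity separately. Since $\mathcal{MH}_{\Lambda}(\A_Q)$ is presented by generators $K_\alpha$ and $\mathbb{X}_{M\oplus P[1]}$ subject to exactly those relations, it suffices to check that their images $X^{\widetilde\alpha}$ and $X_{M\oplus P[1]}$ in $\mathcal{F}_q$ satisfy the same identities. First I would dispose of the easy relations: \eqref{x1} is immediate from $X^{\bf e}X^{\bf f}=\mathfrak q^{\Lambda({\bf e},{\bf f})/2}X^{{\bf e}+{\bf f}}$ together with skew-symmetry of $\Lambda$; \eqref{x2} follows the same way, using that $X_{M\oplus P[1]}$ is a $\ZZ[\mathfrak q^{\pm\frac12}]$-linear combination of monomials $X^{-\widetilde B{\bf e}-(\widetilde I-\widetilde R'){\bf m}+{\bf t}_P}$, and that the $\Lambda$-pairing of $\widetilde\alpha$ with each such exponent can be rewritten, via $\widetilde B=\widetilde R'-\widetilde R$ and Lemma \ref{AE}-type manipulations, as $\Lambda(\widetilde\alpha,(\widetilde I-\widetilde R)({\bf m}-{\bf p}))$ independently of ${\bf e}$. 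Relation \eqref{x3} is a special case of \eqref{x5}--type bookkeeping, since $X_{P[1]}=X^{{\bf t}_P}$ are honest monomials and ${\bf t}_P=(I_m-R(\widetilde Q)){\bf p}=(\widetilde I-\widetilde R){\bf p}$ for projectives.

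The substantive content is in relations \eqref{x4}, \eqref{x5}, \eqref{x6}, and here the work has essentially been done already. Relation \eqref{x5} is Theorem \ref{exchange2}(1) once we match the twist: applying $\Psi$ to $\mathbb{X}_M\star\mathbb{X}_{P[1]}$ produces $q^{-\frac12\Lambda((\widetilde I-\widetilde R){\bf m},(\widetilde I-\widetilde R){\bf p})}X_M X_{P[1]}$, which by Theorem \ref{exchange2}(1) equals $X_{M\oplus P[1]}=\Psi(\mathbb{X}_{M\oplus P[1]})$, provided $\Lambda((\widetilde I-\widetilde R){\bf m},(\widetilde I-\widetilde R){\bf p})=\Lambda((\widetilde I-\widetilde R){\bf m},{\bf t}_P)$, which holds by ${\bf t}_P=(\widetilde I-\widetilde R){\bf p}$. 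Relation \eqref{x6} is Theorem \ref{exchange2}(2) in the same fashion, using that $\Hom_{k\widetilde Q}(P,M)$ for $M$ a $kQ$-module coincides with $\Hom_{kQ}(P,M)$ since $Q$ is a full subquiver of $\widetilde Q$ with only arrows $n+i\to i$ added (so the extra vertices carry no module structure on a $kQ$-module), giving ${}_Q\Hom_{k\widetilde Q}(P,M)_B={}_Q\Hom_{\A_Q}(P,M)_B$. Relation \eqref{x4} is the classical quantum Caldero--Chapoton cluster multiplication formula for an acyclic quiver, whose proof proceeds by expanding $X_M X_N$, applying the Hall-algebra associativity identity \eqref{jiehe} to the structure constants $\sum_{[L]}\frac{|\Ext^1_\A(M,N)_L|}{|\Hom_\A(M,N)|}$, and matching Grassmannian counts; I would cite the derivation already present in the literature (\cite{rupel,fanqin,DX}) and reconcile the normalization $q^{\frac12\Lambda((\widetilde I-\widetilde R){\bf m},(\widetilde I-\widetilde R){\bf n})+\lr{{\bf m},{\bf n}}}$ with the twist in \eqref{x4}, exactly as in the proof of Theorem \ref{exchange2}.

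For surjectivity, observe that by construction $\mathcal{AH}_q(Q)$ is the $\ZZ\mathbb P$-subalgebra of $\mathcal{F}_q$ generated by all the quantum cluster characters $X_{M\oplus P[1]}$, and $\ZZ\mathbb P$ is generated over $\ZZ[\mathfrak q^{\pm\frac12}]$ by the monomials $X^{\pm{\bf e}_i}$ for $n+1\le i\le 2n$, i.e.\ by the $X^{\widetilde\alpha}$ with $\alpha$ ranging over $\pm{\bf e}_i$ (and more generally all $X^{\widetilde\alpha}$, $\alpha\in\ZZ^n$, lie in $\ZZ\mathbb P$). Since every generator of $\mathcal{AH}_q(Q)$ is thus hit — $K_\alpha\mapsto X^{\widetilde\alpha}$ and $\mathbb{X}_{M\oplus P[1]}\mapsto X_{M\oplus P[1]}$ — the homomorphism $\Psi$ is surjective. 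The main obstacle is entirely in relation \eqref{x4}: unlike \eqref{x5} and \eqref{x6}, it is not a direct corollary of Theorem \ref{exchange2} as stated, and carrying out the bilinear-form bookkeeping to see that the quantum Caldero--Chapoton product of $X_M$ and $X_N$ carries precisely the prefactor $q^{\frac12\Lambda((\widetilde I-\widetilde R){\bf m},(\widetilde I-\widetilde R){\bf n})+\lr{{\bf m},{\bf n}}}$ times the Hall structure constants requires the same careful computation with $\widetilde B=\widetilde R'-\widetilde R$, $\lr{{\bf x},{\bf y}}={\bf x}^{tr}(I-R^{tr}){\bf y}$, and the compatibility \eqref{eq:simply_laced_compatible2} that underlies Theorem \ref{exchange2}; I expect to present it as a lemma parallel to Lemma \ref{AE} and Theorem \ref{exchange2} rather than re-derive it from scratch.
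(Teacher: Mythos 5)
Your proposal is correct and follows essentially the same route as the paper: checking relations (\ref{x1})--(\ref{x3}) by direct monomial computations (with the vanishing $\Lambda(\widetilde{B}{\bf g},\widetilde{\alpha})=0$ coming from Lemma \ref{AE}), handling (\ref{x5})--(\ref{x6}) via Theorem \ref{exchange2}, citing the known quantum Caldero--Chapoton multiplication formula from the literature for (\ref{x4}) together with the $\widetilde{R}'$-versus-$\widetilde{R}$ reconciliation, and deducing surjectivity because the coefficients $X^{\widetilde{\alpha}}$ and the cluster characters all lie in the image.
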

\begin{proof}
In order to prove $\Psi$ is a homomorphism of algebras, it suffices to prove that $\Psi$ preserves the relations $(\ref{x1}$-$\ref{x6})$. Clearly, the relation $(\ref{x1})$ is preserved.

By definition,
\begin{flalign*}
X^{\widetilde{\alpha}}X_{M\oplus P[1]}&=\sum\limits_{[H],[G]}q^{\frac{1}{2}\lr{{\bf p}-{\bf h},{\bf g}}}F_{GH}^MX^{\widetilde{\alpha}}X^{-\widetilde{B}{\bf h}-(\widetilde{I}-\widetilde{R}'){\bf m}+{\bf t}_P}\\
&=\sum\limits_{[H],[G]}q^{\frac{1}{2}\lr{{\bf p}-{\bf h},{\bf g}}}F_{GH}^Mq^{\Lambda(\widetilde{\alpha},-\widetilde{B}{\bf h}-(\widetilde{I}-\widetilde{R}'){\bf m}+{\bf t}_P)}X^{-\widetilde{B}{\bf h}-(\widetilde{I}-\widetilde{R}'){\bf m}+{\bf t}_P}X^{\widetilde{\alpha}}
\end{flalign*}
and
\begin{flalign*}
\Lambda(\widetilde{\alpha},-\widetilde{B}{\bf h}-(\widetilde{I}-\widetilde{R}'){\bf m}+{\bf t}_P)&=\Lambda(\widetilde{\alpha},\widetilde{B}({{\bf m}-{\bf h}})-(\widetilde{I}-\widetilde{R}){\bf m}+{\bf t}_P)\\
&=-\Lambda(\widetilde{B}{\bf g},\widetilde{\alpha})-\Lambda(\widetilde{\alpha},(\widetilde{I}-\widetilde{R})({\bf m}-{\bf p})).
\end{flalign*}
We claim that $\Lambda(\widetilde{B}{\bf g},\widetilde{\alpha})=0$. In fact, for any $\alpha=(\alpha_1,\cdots,\alpha_n)\in\mathbb{Z}^n$, $\widetilde{\alpha}=\sum\limits_{i=1}^n\alpha_i{\bf t}_{P_{n+i}}$.
Then by Lemma \ref{AE},
\begin{flalign*}
\Lambda(\widetilde{B}{\bf g},\widetilde{\alpha})=\sum\limits_{i=1}^n\alpha_i\Lambda(\widetilde{B}{\bf g},{\bf t}_{P_{n+i}})
=\sum\limits_{i=1}^n\alpha_i\lr{{\bf p}_{n+i},{\bf g}}=0.
\end{flalign*}
Hence, \begin{equation*}X^{\widetilde{\alpha}}X_{M\oplus P[1]}=q^{-\Lambda(\widetilde{\alpha},(\widetilde{I}-\widetilde{R})({\bf m}-{\bf p}))}X_{M\oplus P[1]}X^{\widetilde{\alpha}}.\end{equation*} That is, the relation $(\ref{x2})$ is preserved.

By definition,
\begin{flalign*}
{X}_{P[1]}{X}_{Q[1]}=X^{{\bf t}_P}X^{{\bf t}_Q}&=q^{\frac{1}{2}\Lambda((\widetilde{I}-\widetilde{R}){\bf p},(\widetilde{I}-\widetilde{R})\bf{q}
)}{X}^{{\bf t}_P+{\bf t}_Q}\\&=
q^{\frac{1}{2}\Lambda((\widetilde{I}-\widetilde{R}){\bf p},(\widetilde{I}-\widetilde{R})\bf{q}
)}{X}_{(P\oplus Q)[1]}
\end{flalign*}
and
\begin{flalign*}
{X}_{P[1]}{X}_{Q[1]}=X^{{\bf t}_P}X^{{\bf t}_Q}&=q^{\Lambda((\widetilde{I}-\widetilde{R}){\bf p},(\widetilde{I}-\widetilde{R})\bf{q}
)}X^{{\bf t}_Q}X^{{\bf t}_P}\\
&=q^{\Lambda((\widetilde{I}-\widetilde{R}){\bf p},(\widetilde{I}-\widetilde{R})\bf{q}
)}{X}_{Q[1]}{X}_{P[1]}.\end{flalign*}Thus, the relation $(\ref{x3})$ is preserved.

On the one hand, the following equation has been proved in \cite{DSC,DX}
\begin{equation}{X}_{M}{X}_{N}=q^{\frac{1}{2}\Lambda((\widetilde{I}-\widetilde{R}'){\bf m},
(\widetilde{I}-\widetilde{R}')\bf{n})+\langle\bf{m},\bf{n}\rangle}\sum_{[L]}\frac{|\mathrm{Ext}_{\A}^{1}(M,N)_{L}|}{|\mathrm{Hom}_{\A}(M,N)|}X_L.\end{equation}
On the other hand, by \cite[Lemma 3.1]{DSC},
\begin{flalign*}
&\Lambda((\widetilde{I}-\widetilde{R}'){\bf m},
(\widetilde{I}-\widetilde{R}')\bf{n})=\\&\Lambda((\widetilde{I}-\widetilde{R}){\bf m},
(\widetilde{I}-\widetilde{R}){\bf n})-\Lambda((\widetilde{I}-\widetilde{R}'){\bf m},\widetilde{B}{\bf n})-\Lambda(\widetilde{B}{\bf m},(\widetilde{I}-\widetilde{R}'){\bf n})-\Lambda(\widetilde{B}{\bf m},\widetilde{B}{\bf n})\\
&=\Lambda((\widetilde{I}-\widetilde{R}){\bf m},
(\widetilde{I}-\widetilde{R}){\bf n})+\lr{{\bf n},{\bf m}}-\lr{{\bf m},{\bf n}}+\lr{{\bf m},{\bf n}}-\lr{{\bf n},{\bf m}}\\
&=\Lambda((\widetilde{I}-\widetilde{R}){\bf m},
(\widetilde{I}-\widetilde{R}){\bf n}).
\end{flalign*}
Thus, the relation $(\ref{x4})$ is preserved.

By Theorem \ref{exchange2}, we know that the relations $(\ref{x5}$-$\ref{x6})$ are preserved. Hence, $\Psi$ is a homomorphism of algebras.

For each $1\leq i\leq n$, $X_{P_{n+i}[1]}=X^{{\bf t}_{P_{n+i}}}=X^{e_{n+i}}$, and thus $\Psi(K_{\hat{S}_i})=X_{P_{n+i}[1]}$. Then it is clear that $\Psi$ is surjective. Therefore, we complete the proof.
\end{proof}

Let $\A_q(Q)$ be the quantum cluster algebra with principal coefficients corresponding to the quiver $Q$, which is the subalgebra of $\mathcal{AH}_{q}(Q)$ generated by
\begin{equation*}
\{X^{\widetilde{\alpha}}, X_{M\oplus P[1]}~|~\alpha\in\mathbb{Z}^n, M\in\Ind(\A_Q)~~\text{is rigid and}~~P\in\Ind(\P)\}.
\end{equation*}
Parallelly, we define $\mathcal{MC}_{\Lambda}(\A_Q)$ to be the subalgebra of $\mathcal{MH}_{\Lambda}(\A_Q)$ generated by $$\{K_{\alpha},\X_{M\oplus P[1]}~|~\alpha\in\mathbb{Z}^n, M\in\Ind(\A_Q)~~\text{is rigid and}~~P\in\Ind(\P)\}.$$
Then we have the following

\begin{corollary}
There exists a surjective algebra homomorphism $$\xymatrix{\psi: \mathcal{MC}_{\Lambda}(\A_Q)\ar@{->>}[r]& \mathcal{A}_{q}(Q).}$$\end{corollary}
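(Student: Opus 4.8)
The plan is to obtain $\psi$ as nothing more than the restriction to $\mathcal{MC}_{\Lambda}(\A_Q)$ of the surjective homomorphism $\Psi\colon\mathcal{MH}_{\Lambda}(\A_Q)\twoheadrightarrow\mathcal{AH}_{q}(Q)$ produced in Theorem \ref{alg-homo}. By definition $\mathcal{MC}_{\Lambda}(\A_Q)$ is the subalgebra of $\mathcal{MH}_{\Lambda}(\A_Q)$ generated by the elements $K_{\alpha}$ with $\alpha\in\mathbb{Z}^{n}$, together with the elements $\X_{M\oplus P[1]}$ with $M\in\Ind(\A_Q)$ rigid and $P\in\Ind(\P)$. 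Since the restriction of an algebra homomorphism to a subalgebra is again an algebra homomorphism, $\psi:=\Psi|_{\mathcal{MC}_{\Lambda}(\A_Q)}$ is automatically a well-defined algebra homomorphism from $\mathcal{MC}_{\Lambda}(\A_Q)$ to $\mathcal{AH}_{q}(Q)$; nothing about compatibility of relations has to be re-examined.

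The remaining point is to identify the image of $\psi$ with $\mathcal{A}_{q}(Q)$. Because $\Psi$ is an algebra homomorphism, it carries the subalgebra generated by a set of elements onto the subalgebra generated by their images. By the description of $\Psi$ on generators in Theorem \ref{alg-homo} we have $\Psi(K_{\alpha})=X^{\widetilde{\alpha}}$ and $\Psi(\X_{M\oplus P[1]})=X_{M\oplus P[1]}$ for all $\alpha\in\mathbb{Z}^{n}$, all rigid $M\in\Ind(\A_Q)$ and all $P\in\Ind(\P)$. Hence the image of $\psi$ is exactly the subalgebra of $\mathcal{AH}_{q}(Q)$ generated by $\{X^{\widetilde{\alpha}},\,X_{M\oplus P[1]}\mid \alpha\in\mathbb{Z}^{n},\ M\in\Ind(\A_Q)\text{ rigid},\ P\in\Ind(\P)\}$, which is precisely the subalgebra $\mathcal{A}_{q}(Q)$ as defined just above the corollary. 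Thus $\psi$ maps onto $\mathcal{A}_{q}(Q)$, i.e. $\psi\colon\mathcal{MC}_{\Lambda}(\A_Q)\twoheadrightarrow\mathcal{A}_{q}(Q)$ is a surjective algebra homomorphism, as claimed.

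I expect essentially no obstacle here: all the genuine work was already done in Theorem \ref{alg-homo}, and the corollary only transports that statement to the sub-level. The single thing one must make sure of is that $\mathcal{MC}_{\Lambda}(\A_Q)$ and $\mathcal{A}_{q}(Q)$ are presented using the \emph{same} index set --- $\alpha\in\mathbb{Z}^{n}$, rigid indecomposable $kQ$-modules, and indecomposable projectives --- so that $\Psi$ matches the chosen generators of the former bijectively onto the chosen generators of the latter; this is built into the definitions, so $\mathcal{MC}_{\Lambda}(\A_Q)$ being a subalgebra needs no separate check. One may additionally observe, although it is not needed for the proof, that every quantum cluster monomial of $\mathcal{A}_{q}(Q)$ occurs among the $X_{M\oplus P[1]}$ with $M$ rigid, so the image of $\psi$ really does contain the full quantum cluster algebra with principal coefficients, consistent with the name.
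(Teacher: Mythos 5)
Your proposal is correct and follows exactly the paper's argument: the paper likewise defines $\psi$ as the restriction of $\Psi$ from Theorem \ref{alg-homo} to $\mathcal{MC}_{\Lambda}(\A_Q)$, and surjectivity onto $\mathcal{A}_{q}(Q)$ follows since the chosen generators of $\mathcal{MC}_{\Lambda}(\A_Q)$ map precisely onto the chosen generators of $\mathcal{A}_{q}(Q)$. Your write-up merely spells out the image-of-generators step that the paper leaves implicit.
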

\begin{proof}
Taking $\psi$ to be the restriction of $\Psi$ in Theorem \ref{alg-homo} to $\mathcal{MC}_{\Lambda}(\A_Q)$ gives the proof.
\end{proof}
\begin{remark}
By Proposition \ref{generators},
we know that the set $$S:=\{K_{\alpha},\mathbb{X}_M,\mathbb{X}_{P[1]}~|~\alpha\in\mathbb{Z}^{n},~M\in\Ind(\A_Q)~~\text{and}~~P\in\Ind(\P)\}$$ is a generating set of the algebra $\mathcal{MH}_{\Lambda}(\A_Q)$. For a Dynkin quiver $Q$, each indecomposable $kQ$-module is rigid. Thus, in this case, $\mathcal{MC}_{\Lambda}(\A_Q)=\mathcal{MH}_{\Lambda}(\A_Q)$. By Theorem \ref{alg-homo}, the images of all the elements in $S$ under the map $\Psi$, which constitute the set $\{X^{\widetilde{\alpha}},{X}_M,{X}_{P[1]}~|~\alpha\in\mathbb{Z}^{n},~M\in\Ind(\A_Q)~~\text{and}~~P\in\Ind(\P)\}$, generate $\mathcal{AH}_{q}(Q)$.
Also, for a Dynkin quiver $Q$, since each indecomposable $kQ$-module is rigid, we have that $\mathcal{AH}_{q}(Q)=\mathcal{A}_{q}(Q)$.
\end{remark}

\begin{corollary}
Let $Q$ be a Dynkin quiver.
There exists a surjective algebra homomorphism $$\xymatrix{\psi: \mathcal{MH}_{\Lambda}(\A_Q)\ar@{->>}[r]& \mathcal{A}_{q}(Q).}$$\end{corollary}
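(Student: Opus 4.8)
The plan is to obtain this corollary directly from the preceding one, the only extra ingredient being the identification $\mathcal{MC}_{\Lambda}(\A_Q)=\mathcal{MH}_{\Lambda}(\A_Q)$ when $Q$ is Dynkin. First I would recall that the surjection $\psi\colon\mathcal{MC}_{\Lambda}(\A_Q)\twoheadrightarrow\mathcal{A}_q(Q)$ produced in the previous corollary is exactly the restriction of the homomorphism $\Psi$ of Theorem \ref{alg-homo} to the subalgebra generated by $\{K_\alpha,\X_{M\oplus P[1]}\mid\alpha\in\ZZ^n,\ M\in\Ind(\A_Q)\text{ rigid},\ P\in\Ind(\P)\}$, and that by Theorem \ref{alg-homo} itself $\Psi$ is a surjective algebra homomorphism onto $\mathcal{AH}_q(Q)$.

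Next I would invoke the classical fact that for a Dynkin quiver $Q$ the category $\A_Q$ is representation-finite and every indecomposable $kQ$-module $M$ satisfies $\Ext^1_{\A_Q}(M,M)=0$; equivalently, the dimension vectors of the indecomposables are precisely the positive roots of the associated (positive-definite) root system, each of Tits norm one. Thus the rigidity condition imposed in the definitions of $\mathcal{MC}_{\Lambda}(\A_Q)$ and of $\mathcal{A}_q(Q)$ is automatic, so the generating set of $\mathcal{MC}_{\Lambda}(\A_Q)$ coincides with the full generating set $S=\{K_\alpha,\X_M,\X_{P[1]}\mid\alpha\in\ZZ^n,\ M\in\Ind(\A_Q),\ P\in\Ind(\P)\}$ of $\mathcal{MH}_{\Lambda}(\A_Q)$ exhibited in Proposition \ref{generators} (cf.\ the Remark following the previous corollary), and likewise $\mathcal{AH}_q(Q)=\mathcal{A}_q(Q)$. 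Hence $\mathcal{MC}_{\Lambda}(\A_Q)=\mathcal{MH}_{\Lambda}(\A_Q)$.

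Finally I would conclude by taking $\psi:=\Psi$: under the identifications just made this is an algebra homomorphism $\mathcal{MH}_{\Lambda}(\A_Q)\to\mathcal{A}_q(Q)$, and it is surjective by Theorem \ref{alg-homo}. There is no real obstacle in this argument; the whole point lies in Theorem \ref{alg-homo} together with the representation-finiteness and rigidity of the Dynkin case, and the only thing one must be mildly careful about is checking that discarding the (now vacuous) rigidity restriction on the generators genuinely leaves the subalgebras $\mathcal{MC}_{\Lambda}(\A_Q)$ and $\mathcal{A}_q(Q)$ unchanged.
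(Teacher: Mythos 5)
Your argument is exactly the paper's: the preceding Remark observes that for a Dynkin quiver every indecomposable $kQ$-module is rigid, so $\mathcal{MC}_{\Lambda}(\A_Q)=\mathcal{MH}_{\Lambda}(\A_Q)$ and $\mathcal{AH}_q(Q)=\mathcal{A}_q(Q)$, and the corollary then follows by restricting (trivially) the surjection $\Psi$ of Theorem \ref{alg-homo}. Your proposal is correct and takes essentially the same route.
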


\section*{Acknowledgments}
The authors are grateful to the anonymous referees for their valuable suggestions and comments,
and supported partially by the National Natural Science Foundation of China (No.s 11771217, 11471177, 11801273), Natural Science Foundation of Jiangsu Province of China (No.BK20180722)
and Natural Science Foundation of Jiangsu Higher Education Institutions of China  (No.18KJB110017).

\end{document}